\newcommand{\vcorr}[3][1]{%
  \begingroup
    \tabcolsep=.5\tabcolsep
    \sbox0{%
      \begin{tabular}[b]{@{}l}%
        #3%
         \tabularnewline
      \end{tabular}%
    }%
    \settoheight{\dimen0 }{%
      \rotatebox{#2}{%
        \copy0 %
        \kern-\tabcolsep
      }%
    }%
    \rule{0pt}{#1\dimen0}%
    \setlength{\wd0 }{1em}%
    \setlength{\ht0 }{1em}%
    \rotatebox{#2}{\usebox{0}}%
  \endgroup
}
\font\eu=eusm10 at 10pt
\def\eF{\text{\eu F}}
\def\eG{\text{\eu G}}
\title{The rationality of ineffective spin genus-$4$ thetanull loci}
\theoremstyle{plain}
\newtheorem{thm}{Theorem}[subsection]
\newtheorem{prop}[thm]{Proposition}
\newtheorem{cor}[thm]{Corollary}
\newtheorem{lem}[thm]{Lemma}
\newtheorem{cla}[thm]{Claim}
\theoremstyle{definition}
\newtheorem{defn}[thm]{Definition}
\newtheorem{nota}[thm]{Notation}
\newtheorem*{ackn}{Acknowledgement}
\theoremstyle{remark}
\newtheorem*{rem}{Remark}
\newcommand{\sD}{\mathcal{D}}
\newcommand{\sF}{\eF}
\newcommand{\sG}{\eG}
\newcommand{\sH}{\mathcal{H}}
\newcommand{\sK}{\mathcal{K}}
\newcommand{\sN}{\mathcal{N}}
\newcommand{\sM}{\mathcal{M}}
\newcommand{\sO}{\mathcal{O}}
\newcommand{\sS}{\mathcal{S}}
\newcommand{\sR}{\mathcal{R}}
\newcommand{\sT}{\mathcal{T}}
\newcommand{\sU}{\mathcal{U}}
\newcommand{\mC}{\mathbb{C}}
\newcommand{\mG}{\mathbb{G}}
\newcommand{\mP}{\mathbb{P}}
\newcommand{\Aut}{\mathrm{Aut}\,}
\newcommand{\Hilb}{\mathrm{Hilb}}
\newcommand{\PGL}{\mathrm{PGL}}
\newcommand{\VSP}{\mathrm{VSP}\,}
\numberwithin{equation}{section}
\author{Francesco Zucconi}
\address{D.I.M.I. \\
the University of Udine\\
Udine, 33100, Italy\newline
\texttt{Francesco.Zucconi@dimi.uniud.it}}
\begin{document}
\begin{abstract} In this paper, we show that 
 the divisor given by couples $[C,\theta]$ where $C$ is a curve of genus $4$ with a vanishing thetanull and $\theta$ is an ineffective thetacharacteristic 
is a rational variety. By our construction, it follows also that the analogous divisor in the Prym moduli space is rational.
\end{abstract}
\maketitle


\section{Introduction}

\subsection{Analog results in the previous literature}

The rationality problem for the coarse moduli space $M_g$ of smooth curves of genus $g$ is a classical problem posed by Francesco Severi; see: \cite{Se}. For $M_g$ and its Deligne-Mumford compactification ${\overline{M_g}}$, see: \cite{DM}.  The literature on this topic is vast; here we can also quote \cite{I}, \cite{HM},\cite{BK}, \cite{Sh3}, \cite{Kat}, \cite{ACG}.
For the rationality problem concerning geometrically defined subvarieties of $\overline M_g$ see: \cite{Bo}, \cite{Sh1}, \cite{Sh2}, \cite{CC}, \cite{Boh}.
 Furthermore there are others moduli spaces for curves.
 One of the most studied, also because of its importance in physics,
is the course moduli space ${{\rm{S^+_g}}}$ of even spin curves, see: cf. \cite{Bi}. It parameterises up to automorphisms  couples $(C,\theta)$ where $C$ is a smooth curve, $\theta$ is a divisor, $h^0(C,\sO_C(\theta))$ is an even number and $2\theta$ is linearly equivalent to the canonical divisor $K_C$; such a divisor $\theta$ is called an even thetacharacteristic.

\subsection{The divisor of curves with a vanishing theta-null}
 In \cite{cornalba} Cornalba constructed a compactification ${\overline{S^{+}_g}}$ which is compatible with ${\overline{M_g}}$. 
By \cite{Kat} we know that $\overline{{\rm{S^{+}_g}}}$ is rational where $g=2,3$. In \cite{TZ3} it is showed that $\overline{{\rm{S^{+}_4}}}$ is rational. Since the Kodaira dimension of $\overline{{\rm{S^{+}_g}}}$ is $\geq 0$ if $g\geq 8$, see: \cite{FaV}, the rationality problem for $\overline{{\rm{S^{+}_g}}}$ is open only for the case $g=5,6,7$.
Nevertheless, as in the classical case of $\sM_g$, there are some geometrically defined subvarieties of $\overline{{\rm{S^{+}_g}}}$ for which it is a natural problem to understand if they are rational or not.

A geometrically defined divisor which plays a fundamental role in the geometry of  $M_g$ is $\sM^{{\rm{null}}}_g$ which is the locus given by the classes of smooth curves having at least one even theta-characteristic such that  $h^0(C,\sO_C(\theta))\geq 2$. It is known that $\sM^{{\rm{null}}}_g$ is irreducible; see: \cite[Theorem 2.4]{Mo}. We recall that ${\overline{\rm{S^+_g}}}$ is irreducible, that there exists an open set of elements $[(C,\theta)]$ where $h^0(C,\sO_C(\theta))=0$ and that an even theta characteristic $\theta$ with  $h^{0}(C,\theta)> 0$ is said to be a vanishing thetanull. We consider the following two divisors:
$$\Theta_{g,\rm{null}}:=\{[C,\theta]\in {\rm{S^+_g}}\mid h^{0}(C,\theta)> 0\}$$
and
$$
{{{\rm{S}}^{{\rm{null}},0 }_{g}}}:=\{[C,\theta]\in {\rm{S^+_g}}\mid [C]\in\sM^{{\rm{null}}}_g,\,\,{\rm{and}}\,\,   h^{0}(C,\theta)=0\}.
$$

By the forgetful morphism ${\rm{S^+_g}}\to\sM_g$ the preimage ${{{\rm{S}}^{{\rm{null}}}_{g}}}$ of $\sM^{{\rm{null}}}_g$ is
\begin{equation}\label{dsunion}
{{{\rm{S}}^{{\rm{null}}}_{g}}}={{{\rm{S}}^{{\rm{null}},0 }_{4}}}\sqcup \Theta_{g,\rm{null}}\end{equation}
As their analogue in $M_g$ the divisors $\Theta_{g,\rm{null}}$ and ${{{\rm{S}}^{{\rm{null}},0 }_{g}}}$ are useful to describe the geometry of ${\overline{S^{+}_g}}$, see \cite[Theorem 1.1]{FaV}. Moreover genus $4$ curves with a vanishing thetanull have been studied to understand the Jacobian locus inside  the moduli space of principally polarized abelian varieties; see: \cite{GS}.

\subsection{Our results} In this paper we solve the rationality problem for  ${{{\rm{S}}^{{\rm{null}}}_{4}}}$. Indeed it is well-known that the canonical model of a non-hyperelliptic curve $C$ with an effective even theta characteristic is contained in a rank 3 quadric and viceversa a rank 3 quadric containing the canonical model defines a halfcanonical pencil on $C$. In particular for a smooth non-hyperelliptic curve of genus $4$ it can exists at most a unique vanishing thetanull. It is also known that ${\overline{\sM^{{\rm{null}}}_4}}$ is rational \cite[Theorem 3.1]{FL}. This immediately shows that $\Theta_{4,\rm{null}}$ is rational. The study of ${{{\rm{S}}^{{\rm{null}},0 }_{4}}}$ is more complicated due to the fact that $h^0(C,\theta)=0$ 
if $[C,\theta]\in {{{\rm{S}}^{{\rm{null}},0 }_{g}}}$.
\medskip

In this paper we show; see Theorem \ref{rationalityvanishing}:
\begin{thm} The divisor ${\overline{{\rm{S}}^{{\rm{null}},0 }_{4}}}$ of $\overline{{\rm{S^+_4}}}$ is a rational variety. In particular ${\overline{{\rm{S}}^{{\rm{null}},0 }_{4}}}$ is irreducible and reduced.
\end{thm}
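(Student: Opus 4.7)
The plan is to realise $\overline{{\rm{S}}^{{\rm{null}},0}_{4}}$ as the birational quotient $V/G$ of a rational variety $V$ by a connected linear algebraic group $G$, adapting the strategy used for $\overline{\rm{S^+_4}}$ in \cite{TZ3}. A general point $[C,\theta]$ corresponds, after canonical embedding, to a quadruple $(Q,F,\theta_0,\theta)$ where $Q\subset\mP^3$ is a rank-$3$ quadric cone with vertex $v$, $F$ is a cubic surface with $C=Q\cap F$ smooth, $\theta_0$ is the vanishing thetanull cut out by the ruling of $Q$, and $\theta\neq\theta_0$ is an ineffective even theta characteristic on $C$.

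The crucial step is a projective-geometric encoding of $\theta$. Let $\phi\colon C\to\Gamma\cong\mP^1$ denote the triple cover obtained by projection from $v$, whose fibres realise the pencil $|\theta_0|$. By Riemann-Roch the push-forward $\phi_*\theta$ is a rank-$3$ bundle of degree $-3$ on $\mP^1$, and the assumption $h^0(C,\theta)=0$ forces $\phi_*\theta\cong\sO(-1)^{\oplus 3}$. The multiplicative structure of $\phi_*\sO_C$, together with the relation $\theta^{\otimes 2}\cong K_C$, equips this bundle with a symmetric pairing valued in $\phi_*K_C$; the resulting symmetric tensor, together with $Q$ and $F$, determines $[C,\theta]$ uniquely. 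Accordingly I would construct $V$ as the variety parametrising such quadruples modulo the obvious equivalences; this is manifestly a Zariski-open subset of an affine bundle over a product of projective spaces, hence rational.

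It then remains to quotient by the group $G=\Aut(Q)\subset\mathrm{PGL}(4)$ stabilising the quadric cone, a connected linear algebraic group acting linearly on the ambient projective space. Applying the no-name lemma to the tower $V\to V/G$, decomposed via the natural fibration over the moduli of cubics on $Q$ (itself a rational variety, since $\overline{\sM^{{\rm null}}_4}$ is rational by \cite[Theorem 3.1]{FL}), reduces the problem to rationality of a lower-dimensional quotient, which can be verified directly by producing explicit coordinates on the slice. The Prym corollary then follows from the standard correspondence between even theta characteristics on $C$ and étale double covers.

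The main obstacle will be making the symmetric-tensor encoding genuinely functorial and checking that the resulting fibration carries the precise (relative) vector-bundle structure needed for the no-name lemma to apply; additional subtleties arise from the vertex of $Q$ and from the distinguished role played by $\theta_0$ within the theta-characteristic lattice, which must be correctly separated from the remaining ineffective ones. Once rationality is proved, the irreducibility and reducedness statements are automatic: a rational variety is irreducible, and the explicit parametrisation realises $\overline{{\rm{S}}^{{\rm{null}},0}_{4}}$ as the generically reduced image of a smooth rational variety.
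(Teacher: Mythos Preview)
Your approach is entirely different from the paper's, and it contains a structural gap that I do not see how to close.

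The paper never works with the quadric cone $Q\subset\mP^3$ or with $\Aut(Q)$. Instead it realises a general $[C,\theta]\in{\rm S}^{{\rm null},0}_4$ via rational sextic curves of a special kind (``conic type'') on the quintic del Pezzo threefold $B\subset\mP^6$: for such a sextic $R$ one pulls back along the universal family of lines on $B$ to obtain a genus-$4$ curve $C(R)$ carrying an ineffective theta characteristic $\theta(R)$, and one proves that $C(R)$ has a vanishing thetanull. The parameter space $\mathring{\sH}_{\rm c.t.}$ of these sextics is identified with an open piece of a $\mP^5$-bundle over $\mathbb G(3,5)$, whose quotient by $G=\mP\mathrm{SL}(2,\mathbb C)$ is shown to be rational by an explicit slice argument. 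A delicate Reconstruction Theorem, using the polar geometry of the $G$-invariant conic $\Omega\subset\sH^B_1$ and the $(4,6)$ configuration of nodes of the Prym-canonical image, shows that the resulting map $\mathring{\sH}_{\rm c.t.}\to{\rm S}^{{\rm null},0}_4$ is dominant; finally one checks that ${\rm S}^{{\rm null},0}_4$ lies in the branch locus of the degree-$2$ map $\widetilde{\rm S}^+_4\dashrightarrow{\rm S}^+_4$ already studied in \cite{TZ3}, so that $\mathring{\sH}_{\rm c.t.}/\!/G\to{\rm S}^{{\rm null},0}_4$ is birational. (Incidentally, the strategy of \cite{TZ3} is this del Pezzo construction, not the cone-and-cubic picture you attribute to it.)

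The gap in your outline is at the heart of the argument. You propose to fibre your parameter space $V$ over the space of cubics on the fixed cone $Q$ and then invoke the no-name lemma, using that $\overline{\sM^{\rm null}_4}$ is rational. But the fibre of $V$ over a fixed cubic $F$ (equivalently, over a fixed curve $C=Q\cap F$) is the set of ineffective even theta characteristics on $C$ other than the vanishing one: a \emph{finite} set of $135$ points. The no-name lemma needs a vector bundle (or at least a Zariski-locally trivial fibration with rational fibre), and a finite \'etale cover is precisely the situation in which it gives you nothing --- indeed, proving that this $135$-sheeted cover of $\sM^{\rm null}_4$ is rational is the whole content of the theorem. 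Your ``symmetric tensor'' encoding does not linearise this finite choice: the pairing $\phi_*\theta\otimes\phi_*\theta\to\phi_*K_C$ is determined by $\theta$, but the locus of tensors that actually arise from some square root of $K_C$ is cut out by nonlinear compatibility conditions with the $\phi_*\sO_C$-algebra structure, and over each $C$ it is again a finite set. So the assertion that $V$ is ``manifestly a Zariski-open subset of an affine bundle'' is exactly the point that requires proof, and I do not see a mechanism in your sketch that supplies it. A secondary issue is that $\Aut(Q)\subset\PGL_4$ is neither reductive nor connected, so even granting a bundle structure you would need some care with descent.
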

%
%
%

Finally consider the Prym moduli space $\sR_4$ which parameterises couples $(C,\eta)$ 
where $\eta$ is a $2$-torsions divisor. By \cite{Ca} we know that $\sR_4$ is rational. For an account on recent progresses in this field see also \cite {V}. Our method implies that 
 the divisor $\sR^{{\rm{null}}}_4$ of $\sR_4$ whose general points are given by those classes $[(C,\eta)]$ such that $C$ admits a unique trigonal linear series and $\eta$ is a nontrivial $2$-torsions divisor is rational:
\medskip 

\begin{thm}
The divisor $\sR^{{\rm{null}}}_4$ is a rational variety.
\end{thm}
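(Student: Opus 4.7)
The plan is to revisit the explicit rational parameterization of $\overline{{\rm{S}}^{{\rm{null}},0}_{4}}$ furnished by the proof of Theorem \ref{rationalityvanishing} and adapt it to the Prym setting. Fix a curve $[C]\in\sM^{{\rm{null}}}_4$ with unique trigonal series $\theta_0 = g^1_3$, the vanishing thetanull. The translation map $\eta \mapsto \theta_0+\eta$ is a bijection between $\Pic(C)[2]$ and the set of thetacharacteristics on $C$; under this map the nonzero $2$-torsions parameterizing $\sR^{{\rm{null}}}_4$ correspond to the thetacharacteristics different from $\theta_0$. The parity of $\theta_0+\eta$ is governed by the Arf quadratic form $q_{\theta_0}$ associated with $\theta_0$: the $\eta$ with $q_{\theta_0}(\eta)=0$ correspond to ineffective even thetacharacteristics and thus match (an open subset of) $\overline{{\rm{S}}^{{\rm{null}},0}_{4}}$, while the $\eta$ with $q_{\theta_0}(\eta)=1$ correspond to odd thetacharacteristics. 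Since $\theta_0$ is canonically defined on $\sM^{{\rm{null}}}_4$, the form $q_{\theta_0}$ is monodromy-invariant, so these two classes define distinct loci in $\sR^{{\rm{null}}}_4$.

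For the isotropic locus, the bijection $\eta\leftrightarrow\theta_0+\eta$ identifies it birationally with $\overline{{\rm{S}}^{{\rm{null}},0}_{4}}$, so its rationality follows directly from Theorem \ref{rationalityvanishing}. For the anisotropic locus, an odd thetacharacteristic $\theta_0+\eta$ with $h^{0}(\theta_0+\eta)=1$ is represented by a unique effective half-canonical divisor $D$ of degree $3$ on $C$ satisfying $2D\sim K_C$. Since the canonical model of $C$ lies on a quadric cone $Q\subset\mP^3$ with vertex $v$ and is cut out by a cubic $F$, the divisor $D$ admits an explicit geometric description in terms of $(Q,F)$ --- for instance, as a residual divisor obtained from hyperplane or ruling sections of the cone, or via the projection from $v$ relating the two half-canonical pencils attached to $D$. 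Using such a description I would parameterize the pairs $(C,D)$ as an open subset of a rational projective bundle (or finite rational cover) over $\sM^{{\rm{null}}}_4$, which is itself rational by \cite{FL}.

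Combining the two loci, each an open subset of a rational variety, yields the rationality statement for $\sR^{{\rm{null}}}_4$ as claimed. The main obstacle lies in the anisotropic case: the construction behind Theorem \ref{rationalityvanishing} is tailored to \emph{ineffective} thetacharacteristics, so one must verify that an analogous explicit parameterization works for effective odd ones. This amounts to making the degree-$3$ half-canonical divisor $D$ rationally accessible from the equations $(Q,F)$ of the canonical curve, a delicate but classical type of construction that requires care --- in particular, one must check that the resulting correspondence is birational, not merely dominant, onto the anisotropic component.
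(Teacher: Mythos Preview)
Your approach diverges from the paper's in a fundamental way. The paper does not split $\sR^{{\rm{null}}}_4$ by the Arf form at all; instead it argues that $\sR^{{\rm{null}}}_4$ is \emph{irreducible} and hence birational to ${\rm S}^{{\rm{null}},0}_4$ in one stroke. Concretely, the paper takes a general $[(C,\eta)]\in\sR^{{\rm{null}}}_4$, sets $\theta:=\eta+\delta$, and runs the Prym-canonical computation of Proposition~\ref{threesexstic} for $\phi_{|\theta+\delta|}=\phi_{|K_C+\eta|}$: the image is again a nodal plane sextic whose six nodes lie in the $(4,6)$ configuration of four lines. The Reconstruction Theorem~\ref{perlomodulothree} then applies verbatim, so $\mathring{\sH}_{\rm c.t.}$ dominates $\sR^{{\rm{null}}}_4$; since $\mathring{\sH}_{\rm c.t.}$ is irreducible, so is $\sR^{{\rm{null}}}_4$. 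The generically injective map $[(C,\theta)]\mapsto[(C,\theta-\delta)]$ from ${\rm S}^{{\rm{null}},0}_4$ is therefore birational, and rationality follows from Theorem~\ref{rationalityvanishing}. Your isotropic step is exactly this last sentence, but the paper reaches it without ever invoking a second component.

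Your anisotropic case is a genuine gap. You propose to parameterize pairs $(C,D)$ with $D$ an effective half-canonical divisor via the geometry of the quadric cone, but you give no construction---only the assertion that ``such a description'' should yield a rational bundle over $\sM^{{\rm{null}}}_4$. Nothing in the paper supports this route: the entire machinery of sextics of conic type on $B$ and the Reconstruction Theorem is built for \emph{ineffective} $\theta$, and you would need a separate argument from scratch. Moreover, even if both loci were rational, a scheme with two rational irreducible components is not ``a rational variety'' in the usual sense, so your final ``combining'' step does not deliver the stated theorem. Your Arf-form observation is a legitimate concern---it is in visible tension with the paper's irreducibility claim---but as written the paper's route sidesteps any odd-theta analysis, whereas your proposal makes that analysis load-bearing and then leaves it unfinished.
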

\noindent
See Theorem \ref{rationalityprym}.

\subsection{The method}
\label{subsection:the method}

An even  thetacharacteristic $\theta$ is said to be an ineffective theta characteristic if the associated linear system $|\theta|$ is empty and this is the general case. In \cite{TZ1}, \cite{TZ2}, \cite{TZ3} we have built a method to study the coarse moduli space of couples $[(C,\theta)]\in \overline{{\rm{S^+_g}}}$ where $C$ is a trigonal curve, that is $C$ admits a morphism $C\to\mP^1$ of degree $3$ and $\theta$ is ineffective. 

We heavily rely on the empiric observation that Hilbert space of rational curves $R$ of degree $d$ with respect to a given polarization on a (birational) Fano variety $X$
 often give moduli space of curves with an extra data, which typically comes from the intersection theory inside $X$. We learn this general philosophy in \cite{Mu2} and \cite{Mukai12}. Actually we reinterpret it in \cite{TZ1} to the case where $X$ is the del Pezzo threefold $B\subset \mP^6$ and we used it to solve in \cite{TZ2} a long standing Scorza's Conjecture \cite{Sco} and in \cite{TZ3} to show that  $\overline{{\rm{S^+_4}}}$ is birational to $\mP^9$. A variation of the method can be applied to the case where $X$ is the singular del Pezzo threefold with a unique ordinary double point. It enables us to show the rationality of the moduli space of one-pointed ineffective spin hyperelliptic curves; see \cite{TZ4}. 
 
 We think that this quest about the interplay between birational geometry of Fano varities and a new point of view about the description of the moduli space of (spin) curves deserves to be fully explored especially in those cases where complicated objects can be controlled by an easy intersection theory on simple Fano varieties; see \cite{Z} too.

\subsubsection{The method for trigonal spin curves}

In this paper we deep our understanding of the trigonal case where $g=4$. Our construction of trigonal even spin curves relies on the very well-known geometry of the del Pezzo $3$-fold $B$ and of the following diagram; see \cite[\S 2]{FuNa}, c.f. Proposition \ref{prop:FN}:
\begin{equation}
\label{oneuniversal&projection}
\xymatrix{ & \sU_1 \ar[dl]_{{ \pi}}
\ar[dr]^{\varphi}\\
 \sH^B_1=\mP^{2} &  & B\subset \mP^{6} }
\end{equation}
\noindent where $\sH^B_1$ is the Hilbert scheme of lines of $B$, $\sU_1$ is the associated universal family and $\pi\colon\sU_{1}\to \sH^{B}_{1}$, 
$\varphi\colon\sU^{B}_{1}\to B$ are the natural morphisms induced by 
respectively the natural projections $B\times 
\sH^{B}_{1}\rightarrow\sH^{B}_{1}$, $B\times \sH^{B}_{1}\rightarrow 
B$. It is also known that $\sH^B_1$ is isomorphic to $\mP^2$ and  $\sH^B_2$ to $\mP^4$ .

Our construction goes as follows: we start with a simple objet $R_d\subset B\subset\mP^6$, actually $R_d$ is a rational curve of degree $d$, and we construct two related objects $C_d\subset\sU_1$ and $M_d\subset\sH^{B}_{1}$ where $C_d:=\varphi^{-1}(R_d)$ and $M_d:=\pi(C_d)$. We can show that under mild generality assumptions $C_d$ is a smooth curve of genus $d-2$ and the morphism $\pi_{|C_{d}}\colon C_d\to\sH^B_1$ is the morphism $\phi_{|\theta+\delta|}\colon C_d\to\mP^2$ where $\delta$ is the $g^1_3$ given by $\varphi_{|C_{d}}\colon C_d\to R_d$ and $\theta$ comes out from the correspondence on $C_d$ obtained by the lines contained in $B$ which intersect $R_d$ and which mutually intersect. In \cite{TZ2}, we explicitly construct the spin curve $[(C_d,\theta)]\in \overline{{\rm{S^+_{d-2}}}}$. In particular we remind the reader that $M_d$ is a plane curve of degree $d$ with $\frac{(d-2)(d-3)}{2}$ nodes. 
\subsubsection{Sextics of conic type}\label{doveimoduli}More precisely, we define by induction $\sH^B_d$ to be the union 
of the components of the Hilbert scheme
whose general point parameterizes a smooth rational curve of
degree $d$ on $B$ obtained as a smoothing of the union of
a general smooth rational curve of degree $d-1$ belonging to
$\sH^B_{d-1}$ and its general uni-secant line. In \cite{TZ1}, we study the behavior of
the spin curve $[(C_d,\theta)]$ starting from a general $R_d$ of  $\sH^B_d$.

Now consider the case $d=6$. We know that $\sH^{B}_{6}$ is birational to $\mP^6$; see \cite[Theorem 5.2]{TZ3}. Its general points parameterize smooth rational curves $R\subset B$ such that $R$ is of 
degree $6$ inside $\mP^6$ and the linear span $\langle R\rangle$ is $\mP^6$, but it contains general sextics contained in hyperplane sections; see: \cite[Proposition 6.1.1]{TZ3}.  By the method above we construct a rational map $\pi^{+}\colon\sH^B_6\dashrightarrow {\overline{\sS^{+}_{4}}}$ given by $[R_d]\stackrel{\pi^+}{\to} [(C_d,\theta)]$. Since $B$ is a 
$\mP({\rm{SL}}(2,\mC))$-quasi homogeneous variety (see cf. subsection \ref{subsection:geometry of del Pezzo}), we can construct a $\mP({\rm{SL}}(2,\mC))$-action on $\sH^B_6$ and this action is compatible with 
$\pi^{+}\colon\sH^B_6\dashrightarrow {\overline{\sS^{+}_{4}}}$. By taking a resolution of $\pi^{+}$ we can define a compact space ${\widetilde{\sS^{+}_{4}}}$, whose general points parameterize the general $\mP({\rm{SL}}(2,\mC))$ orbits of $\sH^B_6$, 
and two rational maps, $p_{ \sS^{+}_{4} }$ and $q_{\sS^{+}_{4}}$, such that $p_{ \sS^{+}_{4} }\colon \sH^B_6\dashrightarrow{\widetilde{\sS^{+}_{4}}}$ followed by $q_{\sS^{+}_{4}}\colon {\widetilde{\sS^{+}_{4}}}\dashrightarrow {\overline{\sS^{+}_{4}}}$ is the rational Stein factorisation of $\pi^+$; see:  \cite[Corollary 4.16]{TZ3}.  Moreover $q_{\sS^{+}_{4}}\colon {\widetilde{\sS^{+}_{4}}}\dashrightarrow {\overline{\sS^{+}_{4}}}$ is of degree $2$.

In this paper, essentially, we study (part of) the ramification divisor of  $\pi^{+}\colon\sH^B_6\dashrightarrow {\overline{\sS^{+}_{4}}}$. Indeed, by a suitable extension of the method of \cite{TZ1}, we realize that there is a divisor $\sH_{{\rm {c.t.}}}\hookrightarrow \sH^B_6$ such that $\pi^+$ is extendable to an open $\mP({\rm{SL}}(2,\mC))$-invariant subscheme ${\mathring{\sH}}_{{\rm {c.t.}}}$ of $\sH_{{\rm {c.t.}}}$ and $\pi^+(\mathring \sH_{{\rm {c.t.}}})\hookrightarrow {\overline{\sS^{{\rm{null}} }_{4}}}$.  Indeed there is a rich geometry associated to ${\mathring{\sH}}_{{\rm {c.t.}}}$. We prove that if $[R]\in{\mathring{\sH}}_{{\rm {c.t.}}}$ then $R$ is contained in a unique hyperplane section $Z\hookrightarrow B$. Moreover $R$ is given by the general element of the linear system $|2\lambda |$ where $\phi_{|\lambda|}\colon Z\to\mP^2$ is the blow-down of four disjoint lines $\epsilon_i\subset Z\subset B$, $i=1,...4$. Hence $R$ comes with the polarization $\lambda$ such that $\phi_{|3\lambda -\epsilon_1-\epsilon_2-\epsilon_3-\epsilon_4|}\colon Z\to B\cap H$ where $\mP^5=H$ is an hyperplane of $\mP^6$. Due to its shape we call such an element $R\in |2\lambda|$ {\it{a rational sextic of conic type}}. 

\subsubsection{Rational spaces}
Using an explicit identification of the space of smooth polarized hyperplane section $(Z,|\lambda|)$ to an open subscheme $U$ of the Grassmannian ${\rm{G}}(2,5)$ naturally associated to $B$; see Lemma \ref{planes in V}, we show that ${\mathring{\sH}}_{{\rm {c.t.}}}$ is isomorphic to a projective quasi-bundle over $U\subset G(2,5)$; see: Proposition \ref{cleaning}. By this and by the $\mP({\rm{SL}}(2,\mC))$-invariant theory of $B$ we can show that ${\mathring{\sH}}_{{\rm {c.t.}}}//\mP({\rm{SL}}(2,\mC))$ is a rational variety; see Theorem \ref{firstrational}. Then, using the interpretation of the geometry of lines on $B$ as the polar geometry induced on $\mP^2$ by a ${\rm{SO}}(3,\mathbb C)$-invariant conic $\Omega$ we show the reconstruction theorem which says that ${\mathring{\sH}}_{{\rm {c.t.}}}$ dominates ${\overline{\sS^{{\rm{null}},0}_{4}}}$. More precisely, for a general point $[C,\theta]\in{{{\rm{S}}^{{\rm{null}},0 }_{4}}}$  there exists a unique $g^1_3$ on $C$, called $\delta$, hence there is a well defined morphism $\varphi_{|\theta+\delta|}\colon C\to \mP(H^0(C,\sO_C(\theta+\delta ))^\star)=\mP^2$, which is known as the Prym canonical. The intersection theory on $B$ makes possible to see the behaviour of the Prym canonical map, and viceversa; see: the Reconstruction Theorem \ref{perlomodulothree}. We think that the description of $g^1_3$-thetasymmetric curves given in Proposition \ref{symmetry}, the occurrence of the $(4,6)$ lines-points configuration behind the Prym-canonical morphism, see Proposition \ref{threesexstic}, and the Reconstruction Theorem are quite interesting geometrical results in themselves. 

Finally we show that ${\overline{\sS^{{\rm{null}},0 }_{4}}}\hookrightarrow  {\overline{\sS^{+}_{4}}}$ is in the branch loci of $\pi^{+}\colon\sH^B_6\dashrightarrow {\overline{\sS^{+}_{4}}}$; see Proposition \ref{forrationalityvanishing}, and then it follows that it is a rational variety, being birational to ${\mathring{\sH}}_{{\rm {c.t.}}}//\mP({\rm{SL}}(2,\mathbb C))$; see: Corollary \ref{rationalityvanishing}. This result implies the rationality of $\sR^{{\rm{null}}}_4$. Indeed our reconstruction theorem and the geometry of the Prym map give that $\sR^{{\rm{null}}}_4$ is irreducible. Hence we obtain that $\sR^{{\rm{null}}}_4$ is birational to  ${\overline{\sS^{{\rm{null}} }_{4}}}$ since it is easy to find a generically injective rational map  ${\overline{\sS^{{\rm{null}} }_{4}}}\dashrightarrow \sR^{{\rm{null}}}_4$.

\begin{ackn} The author want to thank Hiromichi Takagi, Igor Dolgachev and Ivan Cheltsov for their very useful hints.\end{ackn}

\section{Geometry and invariant theory of the quintic del Pezzo $3$-fold}

The smooth del Pezzo $3$-fold
$B$ is known to be unique up to projective equivalence. We quickly review three different characterisations of  $B$. Indeed to take all these different points of view makes shorter our exposition.

\subsection{The del Pezzo threefold as a complete intersection}
\label{subsection:geometry of del Pezzo}
Let $V$ be a vector space such that ${\rm{dim}}_{\mathbb C}V=5$ and $V^{\vee}:={\rm{Hom}}_{\mathbb C}(V,{\mathbb C})$. Let $W\subset \bigwedge^2 V^{\vee}$ be a $3$-dimensional subvector space; that is $ [W]\in {\mathbb G}(3,\bigwedge^2 V^{\vee})$. We will not always distinguish between the Grassmannian ${\mathbb G}(2,V)$ and its image $\mathbb G$ by the Pl\"ucker embedding inside $\mP(\bigwedge^2 V)$. We set: 
$$B_W:= {\mathbb G}\cap \mP({\rm{Ann}}(W)).$$

We point out the reader that the standard ${\rm{GL}}(V )$-action on $V$ induces a ${\rm{GL}}(V )$-action on  ${\mathbb G}(2,V)$ which is compatible with the standard ${\rm{GL}}(\bigwedge^2 V)$-action.

Note that by definition $\mP({\rm{Ann}}(W))$ is a $6$-dimensional subspace of $\mP(\bigwedge^2 V)$ and by Bertini's theorem it holds that if $W$ is generic then $B_W$ is smooth. More precisely:
\begin{prop}\label{sannazero} 
The complete intersection $B_W$ is a smooth threefold if and only if all forms in $W$ have rank $4$. It also holds that all smooth threefolds $B_W$ are in the same ${\rm{GL}}(V )$-orbit; that is there exists a unique (up to projective equivalence)
 smooth threefold obtained as transversal complete intersection of ${\mathbb G}r(2,V)$ by a six dimensional subspace of $\mP^9$.
\end{prop}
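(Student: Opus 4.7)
My plan has two parts, matching the two assertions of the proposition.

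\emph{Smoothness $\Leftrightarrow$ rank $4$.} For $[\Pi]\in B_W$, view $B_W$ locally as the zero scheme on ${\mathbb G}:={\mathbb G}(2,V)$ of three sections of $\mathcal{O}_{\mathbb G}(1)=(\bigwedge^{2}S)^{\vee}$ (where $S\subset V\otimes\mathcal{O}_{\mathbb G}$ is the tautological subbundle) attached to a basis $\alpha_{1},\alpha_{2},\alpha_{3}$ of $W$. Identifying $T_{[\Pi]}{\mathbb G}=\Hom(\Pi,V/\Pi)$, a direct deformation computation (writing $\Pi_{t}=(1+t\tilde\phi)(\Pi)$ and expanding $\alpha|_{\bigwedge^{2}\Pi_{t}}$) shows that the differential of the section attached to $\alpha$ vanishes at $[\Pi]$ iff the induced bilinear map $\bar\alpha\colon\Pi\otimes V/\Pi\to\mC$ is zero, equivalently $\Pi\subset\ker\alpha$. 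Hence $B_W$ is smooth at $[\Pi]$ iff no nonzero $\alpha\in W$ satisfies $\Pi\subset\ker\alpha$. If every nonzero $\alpha\in W$ has rank $4$, then $\ker\alpha$ is $1$-dimensional and cannot contain a $2$-plane, so smoothness holds everywhere. Conversely, if some nonzero $\alpha\in W$ has rank $\leq 2$ (necessarily $=2$ since $\alpha\neq 0$), then $K:=\ker\alpha$ has dimension $3$; the sub-Grassmannian ${\mathbb G}(2,K)\cong\mP^{2}$ lies inside $\{\alpha=0\}$, and the restrictions of $\alpha_{2},\alpha_{3}$ to $K$ are skew forms of rank $\leq 2$ on a $3$-dimensional space, each cutting out all of $\mP^{2}$ or a line therein. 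Two such subsets of $\mP^{2}$ always meet, producing some $[\Pi]\in B_W$ with $\Pi\subset K=\ker\alpha$, which is then a singular point.

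\emph{Uniqueness.} Let $U\subset{\mathbb G}(3,\bigwedge^{2}V^{\vee})$ be the open locus of smooth $B_W$'s, equivalently the locus where $\mP(W)\subset\mP^{9}$ is disjoint from the Pfaffian variety ${\mathbb G}(2,V^{\vee})\subset\mP^{9}$ of decomposable $2$-forms. Since ${\mathbb G}(2,V^{\vee})$ has dimension $6$ and $6+2<9$, a generic $\mP^{2}$ misses it, so $U$ is nonempty open in the irreducible $21$-dimensional variety ${\mathbb G}(3,\bigwedge^{2}V^{\vee})$, hence irreducible. Construct a distinguished $W_{0}\in U$ from the $SL(U_{0})$-equivariant decomposition $\bigwedge^{2}(\Sym^{4}U_{0})\cong\Sym^{6}U_{0}\oplus\Sym^{2}U_{0}$ with $U_{0}\cong\mC^{2}$ and $V:=\Sym^{4}U_{0}$: set $W_{0}:=\Sym^{2}U_{0}$ and verify (via one explicit rank-four form, then propagating by $SL(2)$-equivariance to the whole line-orbit) that $W_{0}\in U$. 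Then $SL(U_{0})\cdot\mC^{\ast}\subseteq\mathrm{Stab}_{GL(V)}(W_{0})$ contributes a $4$-dimensional stabilizer, so $\dim GL(V)\cdot W_{0}\leq 25-4=21=\dim U$. If equality holds, the orbit is open in the irreducible $U$, hence equal to $U$, yielding uniqueness up to $GL(V)$.

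The principal technical hurdle is upgrading the dimensional bound to equality, i.e., showing $\mathrm{Stab}_{GL(V)}(W_{0})=SL(U_{0})\cdot\mC^{\ast}$. This is essentially the classical fact $\Aut(B_{W_{0}})=\mP({\rm{SL}}(2,\mC))$ for the quintic del Pezzo threefold (attributable to Fano). A self-contained verification proceeds through the line geometry described in \S\ref{subsection:the method}: the Hilbert scheme of lines of $B_{W_{0}}$ is $\mP^{2}$ together with a distinguished conic recording incidence, and any automorphism of $B_{W_{0}}$ descends to an automorphism of this $\mP^{2}$ preserving the conic, giving exactly $PGL_{2}\cong SO(3,\mC)$.
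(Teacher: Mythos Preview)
The paper's own ``proof'' is simply a reference to \cite[Lemma~2.1]{San}; no argument is given. So there is no in-house approach to compare against --- your proposal is supplying content the paper outsources.

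Your Part~1 (smoothness $\Leftrightarrow$ rank~$4$) is correct and standard: the tangent-space identification $T_{[\Pi]}{\mathbb G}=\Hom(\Pi,V/\Pi)$ gives exactly the criterion you state, and the converse argument (a rank-$2$ form forces a singular point via ${\mathbb G}(2,K)\cong\mP^{2}$) is clean.

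Your Part~2 strategy (open orbit in an irreducible variety) is sound, but two points need tightening. First, the verification that $W_{0}\in U$: the projectivisation $\mP(W_{0})\cong\mP(\Sym^{2}U_{0})$ has \emph{two} $SL(U_{0})$-orbits (the conic of squares and its complement), so checking one rank-$4$ representative and ``propagating by $SL(2)$-equivariance'' covers only one orbit. You must check a representative on the conic as well, or argue directly that $B_{W_{0}}$ is smooth. Second, and more substantially: your dimension count gives only $\dim\mathrm{Stab}_{GL(V)}(W_{0})\geq 4$, and you correctly flag that equality is the crux. Your proposed route via $\Aut(B_{W_{0}})=\mP({\rm SL}(2,\mC))$ and the line geometry is valid, but in the logical order of \emph{this} paper that material (Proposition~\ref{prop:polar}, the $\mP^{2}$-with-conic description of $\sH^{B}_{1}$) is developed downstream of the present proposition, with the automorphism result itself attributed to \cite{PV}. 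So if you take this route you should cite \cite{PV} or \cite{MU} directly rather than the paper's own later sections, to avoid circularity. Alternatively, a self-contained Lie-algebra computation showing $\dim\ker\bigl(\mathfrak{gl}(V)\to T_{W_{0}}{\mathbb G}(3,\bigwedge^{2}V^{\vee})\bigr)=4$ would close the gap without external input.
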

\begin{proof} See c.f. \cite[Lemma 2.1 page 27]{San}.
\end{proof}
From now on we will denote by $B$ the unique smooth threefold obtained as the transversal intersection of ${\mathbb G}(2,V)$ and a $\mP^6$ and $B$ is known as the del Pezzo $3$-fold.

An important result in the theory of del Pezzo's threefolds concerns the image $G'$ inside ${\rm{SL}}(W)\simeq {\rm{SL}}(3,\mathbb C)$ of the stabiliser $SL_W(V)$ of $W$ by the ${\rm{SL}}(V)$-action on ${\mathbb G}(3,\bigwedge^2 V^{\vee})$. Indeed this image is naturally isomorphic to the subgroup 
of ${\rm{SL}}(W)$ given by all transformations fixing a smooth conic $\Omega\subset\mP(W)$, c.f. \cite[Page 28]{San}. Hence $G'\simeq {\rm{SL}}(2,\mathbb C)$.

\subsubsection{An equivariant projection of the Veronese surface}
We remark that there is a natural homomorphism ${\rm{Sym}}^{2}\bigwedge^2 V^{\vee}\to \bigwedge^4 V^{\vee}$ and by the isomorphism 
$\mP(\bigwedge^4 V^{\vee})\simeq \mP(V)$ we can construct a  map:
\begin{equation}\label{squareembedding}
\Box_W\colon \mP(W)\rightarrow \mP(V)
\end{equation}
obtained by the following composition of ${\rm{SL}}(W)$-equivariant morphisms:
$$\mP(W)\stackrel{\nu_2}{\longrightarrow}\mP({\rm{Sym}}^{2}(W))\hookrightarrow \mP({\rm{Sym}}^{2}(\bigwedge^2 V^{\vee}))\to \mP(\bigwedge^4 V^{\vee})\simeq \mP(V).
$$
\begin{lem}\label{squaremorphism} The map $\Box_W\colon \mP(W)\rightarrow \mP(V)$ is an $SL_W(V)$-equivariant embedding.
\end{lem}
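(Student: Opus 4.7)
The plan is to split the statement into (i) equivariance and (ii) the embedding property, and to reduce (ii) to a single linear-algebra lemma controlled by Proposition~\ref{sannazero}. Equivariance follows by inspection of the four building maps: the Veronese $\nu_2$ is $\mathrm{GL}(W)$-equivariant, the inclusion $\mathrm{Sym}^{2} W\hookrightarrow \mathrm{Sym}^{2}\bigwedge^{2} V^{\vee}$ is $SL_{W}(V)$-equivariant by the very definition of the stabilizer, the wedge-multiplication $\mathrm{Sym}^{2}\bigwedge^{2} V^{\vee}\to\bigwedge^{4} V^{\vee}$ is $\mathrm{GL}(V)$-equivariant, and the canonical identification $\mP(\bigwedge^{4} V^{\vee})\cong\mP(V)$ is $\mathrm{SL}(V)$-equivariant (induced by any volume form on $V$). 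For the embedding property, $\Box_{W}$ factors as $\nu_{2}$ composed with the linear projection $\mP(\mathrm{Sym}^{2} W)\dashrightarrow\mP(\bigwedge^{4} V^{\vee})$, so, since $\mP(W)$ is projective, it is enough to prove: (a) $\omega\wedge\omega\neq 0$ for every $\omega\in W\setminus 0$; (b) $\Box_{W}$ is injective on closed points; and (c) the differential of $\Box_{W}$ is injective at every point. Condition (a) is immediate from Proposition~\ref{sannazero}, which asserts that every nonzero $\omega\in W$ has rank $4$.

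The heart of the argument is the following lemma, which I would prove next: \emph{for every $\omega_{0}\in W\setminus 0$, the only $\eta\in W$ with $\omega_{0}\wedge\eta=0$ in $\bigwedge^{4} V^{\vee}$ is $\eta=0$.} I argue by contradiction. If $\eta\neq 0$, let $L_{0}:=\ker\omega_{0}$, which is a line because $\omega_{0}$ has rank $4$. Contracting $\omega_{0}\wedge\eta=0$ by any $v\in L_{0}$ and using $\iota_{v}\omega_{0}=0$ gives $\omega_{0}\wedge\iota_{v}\eta=0$ in $\bigwedge^{3}V^{\vee}$; since the map $V^{\vee}\to\bigwedge^{3}V^{\vee}$, $\lambda\mapsto\omega_{0}\wedge\lambda$, is injective (a short coordinate check in a normal form $\omega_{0}=e_{1}^{*}\wedge e_{2}^{*}+e_{3}^{*}\wedge e_{4}^{*}$), one gets $\iota_{v}\eta=0$, hence $L_{0}\subset\ker\eta$. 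Since Proposition~\ref{sannazero} forces $\mathrm{rk}\,\eta=4$, this yields $\ker\eta=L_{0}=\ker\omega_{0}$. The pencil $s\omega_{0}+t\eta$ then descends to a pencil of $2$-forms on the $4$-dimensional quotient $V/L_{0}$ whose Pfaffian is a nonzero homogeneous quadratic in $(s,t)$; taking a root $(s_{0},t_{0})\in\mP^{1}$ produces an element $s_{0}\omega_{0}+t_{0}\eta\in W$, nonzero because $\omega_{0}$ and $\eta$ are independent, of rank $\leq 2$ on $V/L_{0}$ and thus on $V$, contradicting Proposition~\ref{sannazero}.

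With the lemma in hand, the remaining conditions follow at once. For (b), if $\omega_{1}^{\wedge 2}=c\,\omega_{2}^{\wedge 2}$ with $c\in\mathbb C^{*}$, I factor
$$(\omega_{1}+\sqrt{c}\,\omega_{2})\wedge(\omega_{1}-\sqrt{c}\,\omega_{2})=\omega_{1}^{\wedge 2}-c\,\omega_{2}^{\wedge 2}=0$$
(the cross terms cancel because $2$-forms wedge-commute) and apply the lemma to the first factor to obtain $[\omega_{1}]=[\omega_{2}]$. For (c), the differential at $[\omega_{0}]$ sends $[\eta]\in W/\langle\omega_{0}\rangle$ to $[\omega_{0}\wedge\eta]\in\bigwedge^{4} V^{\vee}/\langle\omega_{0}^{\wedge 2}\rangle$; the image vanishes iff $\omega_{0}\wedge(\eta-c\omega_{0})=0$ for some $c$, and the lemma forces $\eta\in\langle\omega_{0}\rangle$. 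The main obstacle in this plan is the pencil-Pfaffian step of the lemma, which uses the full strength of the smoothness criterion of Proposition~\ref{sannazero}, namely that \emph{every} nonzero $\omega\in W$ has rank exactly $4$; the preliminary injectivity of $\lambda\mapsto\omega_{0}\wedge\lambda$ is an auxiliary computation that could equivalently be phrased as the statement that a rank-$4$ alternating form on a $5$-dimensional space has $4$-dimensional image in $V^{\vee}$.
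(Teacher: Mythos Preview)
Your argument is correct. The paper does not actually give a proof here: it simply writes ``Easy; c.f.\ see \cite[page 29]{San}'' and moves on, so there is no substantive approach to compare against. Your write-up supplies what the paper omits, and the key lemma (that $\omega_0\wedge\eta=0$ in $\bigwedge^4 V^\vee$ forces $\eta=0$ for $\omega_0,\eta\in W$) is proved cleanly via the Pfaffian-pencil trick, which is exactly the right way to leverage the rank-$4$ hypothesis from Proposition~\ref{sannazero}. One small remark: your closing parenthetical, rephrasing the injectivity of $\lambda\mapsto\omega_0\wedge\lambda$ as ``a rank-$4$ alternating form on a $5$-dimensional space has $4$-dimensional image in $V^\vee$,'' is not quite the same statement; the correct equivalent is that no nonzero $\lambda\in V^\vee$ divides $\omega_0$, which is immediate since a decomposable $2$-form has rank $\leq 2$. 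This is cosmetic and does not affect the proof.
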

\begin{proof} Easy; c.f. see \cite[page 29]{San}.
\end{proof}
We will given later a nice interpretation of the above morphism. Since we will sometimes use it, we call $\Box_W\colon \mP(W)\rightarrow \mP(V)$ the {\it{square embedding}}.

\subsection{$\mP({\rm{SL}}(2,\mathbb C))$-invariant theory description of the del Pezzo threefold}
\label{subsection:geometry of del Pezzo 2}
In order to make easier to follow some of our proofs we present an explicit description of $B$.
\subsubsection{${\rm{SL}}(2,\mathbb C)$-action on binary forms}
Let $V_{d}:=\mC[x,y]_{d}$ be the $d+1$- dimensional vector space of
binary $d$-forms. It is well-known that $V_{d}$ gives a model of the
$d+1$-irreducible representation of ${\rm{SL}}(2,\mC)$ by the
action:
\[
g:= \left(
\begin{array}{cccc}
 a & b\\
 c & d  
\end{array}
\right); \,\, x\stackrel{g}{\mapsto} ax+by;\,\, y\stackrel{g}{\mapsto} cx+dy
\]\noindent
The above action admits a ${\rm{SL}}(2,\mathbb C)$-equivariant split surjection 
\[
s_{l}\colon V_{m}\otimes V_{n}\rightarrow V_{m+n-2l},\,\, 
F\otimes G\stackrel{s_{l}}{\mapsto} (FG)_{l}
\]
where $(FG)_{l}$ is known as the $l$-transvectant of $F$ and $G$; more
explicitly:
$$
(FG)_{l}:= \frac{(m-l)!}{m!}\frac{(n-l)!}{n!}\sum_{i=0}^{l}(-1)^{i}
\left(\begin{array}{c}
 l \\
 i   
\end{array}\right) 
\frac{\partial^{l}F} {\partial x^{l-i}\partial y^{i} }
\frac{\partial^{l}G} {\partial x^{i}\partial y^{l-i}}
$$\noindent By the transvectants it remains defined an isomorphism of
$G$-representations
$$
\bigwedge^{2}V_{d} =
\bigoplus_{l=1}^{[\frac{d+1}{2}]}V_{2(d+1-2l)}¥
$$
where for each $1\leq l\leq [\frac{d+1}{2}]$the projection is given
by: $p_{2l-1}(F\otimes G):=(F,G)_{2l-1}$. We recall that if $l=2k-1$ and $a,b,c,d\in\mC$ then it holds
$$
(aF+bG,cF+dG)_{2k-1}=\det \left(
\begin{array}{cccc}
 a & b\\
 c & d  
\end{array}
\right)\cdot(F,G)_{2k-1}
$$

The set $\{(F,G)_{2k-1}\}_{k}$ is called the set of combinants of $F$ 
and $G$ and actually it is associated to the pencil $\{\lambda F+\mu
G| [\lambda:\mu]\in\mP^1\}$.

\subsubsection{$\mP({\rm{SL}}(2,\mC))$-action on $\mathbb P(\bigwedge^{2}V_4)$.}
Now assume $d=4$. Note that ${\rm{dim}}_{\mathbb C}V_4=5$. From now on we set $G:=\mP({\rm{SL}}(2,\mC))$. The ${\rm{SL}}(2,\mC)$-decomposition $\bigwedge^{2}V_{4}\simeq V_{6}\bigoplus V_{2}$ comes equipped with the two homomorphisms $s_1\colon \bigwedge^{2}V_{4}\to V_6$, $s_3\colon \bigwedge^{2}V_{4}\to V_2$ and it amounts to write
$V_6\simeq {\rm{Ker}}s_{3}$ and $V_2\simeq {\rm{Ker}}s_{1}$. It is easy to find equations for both subspaces of $\bigwedge^{2}V_{4}$. 
In this case the Grassmannian $\mG\subset\mP(\bigwedge^{2}V_{4})$ is the image of the
Grassmannian $G(2,V_{4})$ of the $2$-dimensional subvector spaces of
$V_{4}$ by the Pl\"ucker embedding. Following cf. \cite[ p.505]{MU} and also by Proposition \ref{sannazero} it holds that the del Pezzo 3-fold $B$ can be seen as follows:
$$B=s_{1}(\mG)\subset \mP(V_{6}).$$

By c.f. \cite{FuNa} we also know the $\mP({\rm{SL}}(2,\mC))$-orbit decomposition 
$$
B=G([xy(x^{4}+y^{4}])\cup G([6x^{5}y)]\cup G([x^{6}])
$$
where ${\overline{G([6x^{5}y})]}=G([6x^{5}y)]\cup G([x^{6}])$ is an anticanonical section of $B$.
Moreover by  \cite[Lemma 1.6 p.497]{MU} ${\overline{G([6x^{5}y])}}$ is
singular only along $G([x^{6}])$ and 
${\overline{G([6x^{5}y}])}$ is the image of $\mP^{1}\times\mP^{1}$ by a linear sub-system of bidegree
$(5,1)$. It is well-known that $G([x^{6}])$ is a rational normal sextic of maximal hull.

\subsection{$\mP({\rm{SL}}(2,\mathbb C))$-invariant theory description of lines and conics}
It is well known that $H^i(B,\sO_B)=0$ for $i>0$, that the first Chern-class homomorphism $c_1\colon {\rm{Pic}}(B)\to H^2(B,\mathbb Z)$ is an isomorphism and that ${\rm{Pic}}(B)=[\sO_B(1)]\mathbb Z$ where $\sO_B(1)$ is the very ample invertible sheaf induced by the Pl\"ucker embedding $B\hookrightarrow\mP^6$. Moreover $-K_B=2H$ where $H$ is an hyperplane section. Thanks to the very ample divisor $H$ we call {\it{line}} any subscheme $l\hookrightarrow B$ with $H$-Hilbert polynomial $h_l(t) = 1 + t$ and {\it{conic}} any subscheme $q\hookrightarrow B$ with $H$-Hilbert polynomial $h_q(t) = 1 +2 t$. We denote respectively by $\sH^{B}_{1}$ and $\sH^{B}_{2}$ the Hilbert scheme of
lines and of conics on $B$ for the chosen $H$-polarisation of $B$. Both Hilbert schemes are well-known; see:
\cite{FuNa}, \cite{IlievB5} and cf. \cite{San}.

We revise an explicit description of them using the theory of invariants. We will use freely the identification 
$\bigwedge^{2}V_{4}\simeq V_{6}\bigoplus V_{2}$. We also remind the reader that in this case the dual vector space $V^{\star}_{4}$ of $V_4$ is given by the ring of degree-$4$ homogeneous binary partial differential operators 
$\mathbb C[\partial_{x},\partial_y]_{4}=V^{\star}_{4}$ and accordingly we have the ${\rm{SL}}(2,\mathbb C)$-splitting $\bigwedge^{2}V_{4}^*\simeq V_{6}^*\bigoplus V_{2}^*$

\subsubsection{Invariant theory description of $\sH^{B}_{1}$.}
In the flag variety $\sF(1,3, V_{4})$ we consider the following subscheme: 
$$
\sH_{1}:=
\{[\langle\alpha\rangle\subset\langle\alpha,\beta_{0},\beta_{1}\rangle]\in
\sF(1,3, V_{4})\mid s_{3}(\alpha\wedge\beta_{0})=0,
s_{3}(\alpha\wedge\beta_{1})=0 
\}
$$
\noindent We define the homomorphism:
 $$
 \sigma_3\colon \sH_{1}\to \mP(V_{2}), \,\,[\langle\alpha\rangle\subset\langle\alpha,\beta_{0},\beta_{1}\rangle] \mapsto [ s_{3}(\beta_0\wedge\beta_{1})].
 $$\noindent
 It is straightforward to show that :

\begin{prop}\label{lines} There exists a $G$-equivariant isomorphism between $\sH_{1}$ and $\sH^{B}_{1}$. Moreover the natural homomorphism 
$\sigma_{3}\colon \sH_{1}\rightarrow\mP(V_{2})$ is a  $G$-equivariant isomorphism which induces a $G$-equivariant
    isomorphism $\sigma_{3}\colon \sH^{B}_{1}\rightarrow\mP(V_{2})$.
\end{prop}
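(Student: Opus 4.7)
The plan is to identify $\sH_1$ with $\sH^B_1$ via the Pl\"ucker picture of lines on the Grassmannian, then verify that the transvectant map $\sigma_3$ is a well-defined $G$-equivariant morphism, and finally promote it to an isomorphism using the known description $\sH^B_1\simeq\mP^2$ together with Schur-type rigidity for $G$-equivariant morphisms of $\mP^2$'s.

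A line in $\mathbb{G}(2,V_4)\subset\mP(\bigwedge^2 V_4)$ is a pencil of $2$-planes in $V_4$ sharing a common $1$-plane and contained in a common $3$-plane, so it is parameterized by a flag $\langle\alpha\rangle\subset\Pi\subset V_4$ with $\dim\Pi=3$. Such a line lies in $B=\mathbb{G}\cap\mP(V_6)=\mathbb{G}\cap\mP(\ker s_3)$ precisely when $s_3(\alpha\wedge\beta)=0$ for every $\beta\in\Pi$; after fixing a basis $\{\alpha,\beta_0,\beta_1\}$ of $\Pi$, this reduces to the two conditions $s_3(\alpha\wedge\beta_0)=s_3(\alpha\wedge\beta_1)=0$ cutting $\sH_1$ out of $\sF(1,3,V_4)$. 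The tautological family over the flag variety therefore restricts to the universal family of lines on $B$, producing a $G$-equivariant morphism $\sH_1\to\sH^B_1$ that is bijective on closed points; using smoothness and irreducibility of $\sH^B_1\simeq\mP^2$ I would upgrade this to a scheme-theoretic isomorphism.

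To see that $\sigma_3$ is well-defined I would verify independence from the choice of basis $(\beta_0,\beta_1)$ of $\Pi/\langle\alpha\rangle$. Any other such basis has the form $\beta_i' = \sum_j m_{ij}\beta_j + c_i\alpha$ with $(m_{ij})\in\mathrm{GL}_2$; the combinant identity recalled in the text then yields $s_3(\beta_0'\wedge\beta_1')=\det(m_{ij})\cdot s_3(\beta_0\wedge\beta_1)$ modulo multiples of $s_3(\alpha\wedge\beta_0)$ and $s_3(\alpha\wedge\beta_1)$, which vanish on $\sH_1$. Hence $[s_3(\beta_0\wedge\beta_1)]\in\mP(V_2)$ is intrinsic to the flag, and the $G$-equivariance of $\sigma_3$ is inherited from that of the transvectant $s_3$.

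Finally, I would promote $\sigma_3$ to an isomorphism by showing it is everywhere defined and then invoking rigidity. Everywhere-definedness amounts to $s_3(\beta_0\wedge\beta_1)\neq 0$ for every flag in $\sH_1$: otherwise, combined with $s_3(\alpha\wedge\Pi)=0$, all of $\bigwedge^2\Pi$ would lie in $\ker s_3$, forcing the corresponding projective line in $B$ to collapse to a point. Both $\sH_1\simeq\mP^2$ and $\mP(V_2)\simeq\mP^2$ carry an action of $G=\mP(\mathrm{SL}(2,\mathbb C))$ through the unique irreducible $3$-dimensional representation $V_2$ (the $G$-action on $\sH^B_1$ is nontrivial and must factor through some irreducible $3$-dimensional rep of $G$), so a nonconstant $G$-equivariant morphism between them must, by Schur's lemma applied to the linear systems $|\sO(1)|$, be a projective linear isomorphism; non-constancy is checked on a single explicit flag. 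The main technical obstacle is verifying that $\sigma_3$ is everywhere defined on $\sH_1$; the subsequent rigidity step is then a direct consequence of $G$-equivariance and the already-known isomorphism $\sH^B_1\simeq\mP^2$.
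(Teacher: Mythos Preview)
The paper itself does not prove this statement: its proof is a bare citation to \cite[Proposition~2.20]{San}. So there is no in-paper argument to compare your proposal against, and your outline would in effect supply what the paper omits. Your identification of $\sH_1$ with the Hilbert scheme of lines on $B$ via flags $\langle\alpha\rangle\subset\Pi$ is correct, and your verification that $\sigma_3$ is well-defined using the combinant identity is clean.

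There is one genuine gap, in your argument that $\sigma_3$ is everywhere defined. You write that if $s_3(\beta_0\wedge\beta_1)=0$ then, together with $s_3(\alpha\wedge\Pi)=0$, all of $\bigwedge^2\Pi$ lies in $\ker s_3$, ``forcing the corresponding projective line in $B$ to collapse to a point.'' The premise is right but the stated conclusion is not: nothing collapses. What actually follows is that the entire $\alpha$-plane $\mP(\bigwedge^2\Pi)\simeq\mP^2$ (every nonzero element of $\bigwedge^2\Pi$ is decomposable since $\dim\Pi=3$) would lie in $\mP(V_6)\cap\mathbb G=B$. The contradiction you want is that $B$ contains no plane; this is standard (for instance: a $\mP^2\subset B$ would already carry a $2$-dimensional family of lines, hence \emph{all} lines of $B$ since $\sH^B_1\simeq\mP^2$ is irreducible, but the three lines through a general point of $B$ are not coplanar). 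You should replace the ``collapse'' sentence with this.

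A smaller point: your Schur step is essentially correct but elides one check. A $G$-equivariant morphism $\mP(V_2)\to\mP(V_2)$ pulls $\sO(1)$ back to some $\sO(d)$, and the induced $G$-map $V_2^*\hookrightarrow\mathrm{Sym}^d V_2^*$ forces $d$ odd; for odd $d>1$ the unique $V_2^*$-summand is $Q^{(d-1)/2}\cdot V_2^*$ with $Q$ the invariant conic, so the associated linear system has base locus $\{Q=0\}$ and does not define a morphism. Hence $d=1$ and the map is projectively linear, as you claim. It is worth saying this explicitly rather than invoking Schur on $|\sO(1)|$ as if $d=1$ were automatic.
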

\begin{proof} See c.f. \cite[Proposition 2.20]{San}.
\end{proof}

By  \cite{MU} we know that the anticanonical section ${\overline{G([6x^{5}y])}}$ is the loci swept by the lines $l\subset B$ such that
$\sN_{l/B}=\sO_{l}(-1)\oplus\sO_{l}(1)$. These lines are very important to understand the geometry of $B$. Consider $\phi_{1}\colon\sH^{B}_{1}\rightarrow\mP(V_{4})$ the morphism
induced by the projection $\sF(1,3, V_{4})\rightarrow \mP(V_{4})$; it is easy to show the following:

\begin{cor}\label{veroneseproj} The locus
    $\Omega':=G([x^{2}]):=\{{}^{\tau}[a_{0},a_{1},a_{2}]\in\mP^2\mid a^{2}_{1}-a_{0}a_{2}=0\}$is a conic inside $\mP(V_{2})$ and it parameterizes the loci of lines
    $l\subset B$ such that $\sN_{l/B}=\sO_{l}(-1)\oplus\sO_{l}(1)$.
    Moreover the composition $\phi_{1}\circ \sigma_{3}^{-1}\colon
    \mP(V_{2})\rightarrow \mP(V_{4})$ is induced by the square morphism.
\end{cor}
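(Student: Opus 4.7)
The plan is to treat the three assertions in sequence, each reducing to $G$-equivariance plus an orbit analysis on $\mP(V_{2})$.

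For the first assertion, I would observe that $G([x^{2}])$ is the $\mathrm{SL}(2,\mC)$-orbit of the rank-one form $x^{2}$. Since every non-zero rank-one binary quadratic form is an $\mathrm{SL}(2,\mC)$-translate of $x^{2}$, writing the general element of $V_{2}$ as $a_{0}x^{2} + 2a_{1}xy + a_{2}y^{2}$ characterises the orbit by the vanishing of the discriminant $a_{1}^{2} - a_{0}a_{2} = 0$, which is the stated smooth conic $\Omega'$.

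For the second assertion, the plan combines the $G$-equivariance of $\sigma_{3}$ (Proposition \ref{lines}) with the orbit stratification of $\mP(V_{2})$ under $G = \mP(\mathrm{SL}(2,\mC))$. By the cited consequence of \cite{MU}, the lines $l \subset B$ with $\sN_{l/B} \cong \sO_{l}(-1) \oplus \sO_{l}(1)$ rule the quintic scroll $\overline{G([6x^{5}y])}$ and so form a one-dimensional, $G$-invariant closed subscheme of $\sH^{B}_{1}$. Its image under $\sigma_{3}$ is therefore a $G$-invariant curve in $\mP(V_{2}) \cong \mP^{2}$. Since $G$ admits only two orbits on $\mP(V_{2})$ --- the open orbit of rank-two forms and the closed one-dimensional orbit $\Omega'$ of rank-one forms --- this curve must coincide with $\Omega'$.

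For the third assertion, I would identify both $\Box_{W}$ and $\phi_{1} \circ \sigma_{3}^{-1}$ with the canonical Veronese map $[q] \mapsto [q^{2}]$. Both are $G$-equivariant and non-constant (for $\Box_{W}$ by Lemma \ref{squaremorphism}; for $\phi_{1} \circ \sigma_{3}^{-1}$ because distinct lines in $B$ carry distinct vertex forms $\alpha \in V_{4}$). Unwinding the definition \eqref{squareembedding} makes $\Box_{W}$ visibly the Veronese. For the other map, a $G$-equivariant morphism $\mP(V_{2}) \to \mP(V_{4})$ of degree $d$ corresponds to a $G$-equivariant embedding $V_{4}^{\vee} \hookrightarrow \Sym^{d} V_{2}^{\vee}$, and the Clebsch--Gordan decompositions $V_{2} \not\supset V_{4}$ and $\Sym^{2} V_{2} \cong V_{4} \oplus V_{0}$ show that in degree at most two only the Veronese is available.

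The main obstacle I anticipate is controlling the degree of $\phi_{1} \circ \sigma_{3}^{-1}$: representation theory alone permits higher-degree equivariant morphisms (for instance via $V_{4} \subset \Sym^{4} V_{2}$), so one must confirm that the morphism is genuinely quadratic. I would handle this by an explicit computation on a single line in $B$, writing $\alpha$ and $s_{3}(\beta_{0} \wedge \beta_{1})$ in coordinates from the flag description of $\sH_{1}$, checking directly that $\alpha$ is proportional to $s_{3}(\beta_{0} \wedge \beta_{1})^{2}$, and extending the identity across $\sH^{B}_{1}$ by $G$-equivariance.
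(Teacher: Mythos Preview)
Your proposal is correct and considerably more detailed than the paper's own argument. The paper's proof is a single sentence: it says the result is ``an easy interpretation of the square morphism given in Lemma~\ref{squaremorphism} where $V_2^{*}=W\subset\bigwedge^2 V_4^{*}$, $V=V_4$, and the isomorphism $V_4\simeq\bigwedge^4 V_4^{*}$.'' In other words, the paper simply asks the reader to unwind the definition~\eqref{squareembedding} with those identifications and recognise $\phi_1\circ\sigma_3^{-1}$ directly; the first two assertions are left implicit from the surrounding discussion of the $G$-orbits on $B$ and on $\mP(V_2)$.

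Your route differs in that you argue the second assertion abstractly via the orbit dichotomy on $\mP(V_2)$ (this is clean and requires no computation), and you pin down the third assertion by Clebsch--Gordan plus a one-point check to control the degree. This is a genuine alternative: the paper's approach is computational in spirit (trace through the composition $\nu_2$ followed by $\Sym^2\bigwedge^2 V_4^{\vee}\to\bigwedge^4 V_4^{\vee}$ and see that $[q]\mapsto[q^2]$), whereas yours is structural. The payoff of your approach is that it explains \emph{why} only the Veronese can appear; the payoff of the paper's is that the degree issue you flag never arises, since unwinding the square morphism is visibly quadratic from the outset. Your proposed fix---computing $\alpha$ and $s_3(\beta_0\wedge\beta_1)$ for a single explicit flag and checking $\alpha\propto s_3(\beta_0\wedge\beta_1)^2$---is exactly the computation the paper's one-liner is gesturing at, so the two approaches converge at that point.
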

\begin{proof} It is an easy interpretation of the square morphism given in Lemma \ref{squaremorphism} where $V_2^*=W\subset \bigwedge^2V_4^*$, and $V=V_4$ and the isomorphism $V_{4}\simeq \bigwedge^{4}V_{4}^*$.
\end{proof}
Hence we can switch from the coordinate free presentation of subsection (\ref{subsection:geometry of del Pezzo}) to the binary $G$-invariant theory, by the following dictionary: $V=V_4$, $W:=V_2$ where $V_2$ is seen as a subspace of $\bigwedge^2 V_4$ and $B_W=\mP(V_6)\cap{\mathbb G}$, $\sH^{B}_{1}=\mP(V_2)$, $\Omega'=\Omega$.

\subsubsection{Invariant theory description of $\sH^{B}_{2}$.}
To simplify notation we denote by $K_{\beta}:=
{\rm{Ker}}(s_{3}(\beta\wedge \ldots))\colon V_{4}\rightarrow V_{2}$.
where $\beta\in V_{4}$.
Since $V^{\star}_{4}$ is the ring of homogeneous binary partial differential operators of degree $4$,
$\mathbb C[\partial_{x},\partial_y]_{4}=V^{\star}_{4}$, then $V^{\star}_{4}\simeq \bigwedge^{4}V_{4}$. We recall that in general ${\rm{dim}}
K_{\alpha}=2$ but if $\alpha\in  SL_{2}(x^{4})\cup SL_{2}(x^{2}y^{2})$
then  ${\rm{dim}}K_{\alpha}=3$. Now, in the flag variety $\sF(2,4,V_{4})\subset 
\mP(\bigwedge^{2} V_{4})\times\mP(\bigwedge^{4}V_{4})$, we define
\[\sH_{2}:=
\{([U],[W]\in \sF(2,4,V_{4})
\mid U= \langle\alpha,\beta\rangle, W=\langle
K_{\alpha},K_{\beta}\rangle,   {\rm{dim}}_{\mathbb C}U=2, {\rm{dim}}_{\mathbb C}W=4\}.
\]

It is a nice exercise in invariant theory to show that:
\begin{prop}\label{coniche} The Hilbert space $\sH^{B}_{2}$ of conics of $B$ is $G$-isomorphic to $\sH_{2}$. Moreover the morphism $\sH_{2}\to \mP(V_4^\star)$ given by the natural projection is an isomorphism which induces a $G$-isomorphism $\sH^{B}_{2}\simeq \mP(V^{\star}_{4})$
\end{prop}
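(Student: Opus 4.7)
The plan is to follow the pattern established for lines in Proposition \ref{lines}, building a $G$-equivariant dictionary between conics on $B$ and the invariant-theoretic model $\sH_2$, and then showing the projection to $\mathbb{P}(\bigwedge^4 V_4)\cong\mathbb{P}(V_4^*)$ is an isomorphism. The two sides have the same dimension, namely $4$: for the target this is by definition, and for the source it comes from the well-known isomorphism $\sH^B_2\cong\mathbb{P}^4$ (cf.\ \cite{FuNa}, \cite{IlievB5}).

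First I would set up the geometric correspondence. Starting from $([U],[W])\in\sH_2$ with $U=\langle\alpha,\beta\rangle$ and $W=\langle K_\alpha,K_\beta\rangle$, form $G(2,W)\subset G(2,V_4)$ and consider $q_W:=B\cap G(2,W)$. Inside $\mathbb{P}(\bigwedge^2 W)\cong\mathbb{P}^5$ the intersection $\mathbb{P}(V_6)\cap\mathbb{P}(\bigwedge^2 W)$ is a $\mathbb{P}^2$ (cut out by the three equations coming from $s_3|_{\bigwedge^2 W}\colon\bigwedge^2 W\to V_2$), and its intersection with the Plücker quadric $G(2,W)$ is generically a smooth conic in $B$. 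Conversely, a generic smooth conic $q\subset B$ parametrizes a pencil $\{U_t\}_{t\in\mathbb{P}^1}$ of $2$-planes in $V_4$ of generic type $\sO(-1)\oplus\sO(-1)$, whose union spans a $4$-dim subspace $W_q$, and one recovers $q=B\cap G(2,W_q)$. This already yields a $G$-equivariant bijection $\mathbb{P}(V_4^*)\cong\sH^B_2$.

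Second, I would canonically produce the $2$-plane $U_q\subset W_q$ needed to land in $\sH_2$. The key point is to exploit the polar structure of the conic $q_W\subset\mathbb{P}^2=\mathbb{P}(V_6\cap\bigwedge^2 W)$: the symmetric bilinear form on this $\mathbb{P}^2$ coming from the Plücker relation $\eta\wedge\eta\in\bigwedge^4 W\cong\mathbb{C}$ isolates, via transvectant calculus, a distinguished $2$-plane $U_q\subset W_q$. A direct transvectant computation then verifies the defining relation $W_q=\langle K_\alpha,K_\beta\rangle$ for a basis $\alpha,\beta$ of $U_q$. The $G$-equivariance of both constructions follows from the functoriality of $s_3$ under $\mathrm{SL}(2,\mathbb{C})$.

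Finally, I would check bijectivity of the natural projection $\pi\colon\sH_2\to\mathbb{P}(\bigwedge^4 V_4)$. Surjectivity will be immediate from Step 1 applied to every $W$. The harder part is injectivity: given $W$, there should be a unique $U$ with $([U],[W])\in\sH_2$. The main obstacle is precisely this uniqueness statement, because the expression $\langle K_\alpha,K_\beta\rangle$ is not a priori invariant under changing the basis of a given $U$, so one must explain why only a very particular $U\subset W$ makes $W=\langle K_\alpha,K_\beta\rangle$ basis-invariantly. I would overcome this by reducing, via the $G=\mathbb{P}(\mathrm{SL}(2,\mathbb{C}))$-action on $\mathbb{P}(V_4^*)$ (which has only finitely many orbit types), to a small number of normal forms for $W$, and in each case explicitly writing down the unique $U$ using the Plücker/transvectant dictionary. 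Equivalently, $U$ is characterized as the polar line in $\mathbb{P}(W)\cap\mathbb{P}^3$ cut out by the conic $q_W$ together with its ambient symmetric form, which pins it down uniquely and gives the desired inverse to $\pi$.
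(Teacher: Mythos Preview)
The paper does not give its own argument here; its entire proof is the citation ``See c.f.\ \cite[Prop.~2.32]{San}.'' So there is no in-paper proof to compare your outline against, only the deferral to Sanna's thesis.

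Your overall strategy is the natural one and parallels the treatment of lines: associate to a hyperplane $W\subset V_4$ the conic $q_W=B\cap G(2,W)$, and conversely recover $W$ from a smooth conic as the span of the fibres of the tautological subbundle. The dimension count, the identification $\mP(\bigwedge^4 V_4)\cong\mP(V_4^\star)$, and the observation that $\mP(V_6)\cap\mP(\bigwedge^2 W)$ is a $\mP^2$ meeting the Pl\"ucker quadric $G(2,W)$ in a conic are all correct and give the right geometric picture.

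The soft spot is your Step~2 and the injectivity half of Step~3. The sentence ``the symmetric bilinear form \ldots\ isolates, via transvectant calculus, a distinguished $2$-plane $U_q\subset W_q$'' does not describe a mechanism one can follow: the conic $q_W$ lives in $\mP(\bigwedge^2 W)$, not in $\mP(W)$, and it is not evident what canonical $2$-plane in $W$ the Pl\"ucker form picks out. Note that the naive candidate $\{[\alpha]\in\mP(W):K_\alpha\subset W\}$ is \emph{not} a line: since for any $[\alpha\wedge\alpha']\in B$ one has $K_\alpha=\langle\alpha,\alpha'\rangle$ whenever $\dim K_\alpha=2$, this locus is the full preimage of $q_W$ under the rational $\mP^1$-fibration $\mP(V_4)\dashrightarrow B$, $[\alpha]\mapsto[K_\alpha]$, hence a surface in $\mP(W)$. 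So a genuine extra argument is needed to single out $U$ inside it. Your fallback---reduce to $G$-normal forms on $\mP(V_4^\star)$---is workable, but the generic $G$-orbits on $\mP(V_4^\star)$ form a one-parameter family (the $j$-line of binary quartics), so this is a computation with a free parameter, not a finite case check. None of this breaks your approach; it just means the substantive content, which Sanna's thesis carries out, is precisely the explicit transvectant computation you gesture at, and it has to be done rather than asserted.
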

\begin{proof} See c.f. \cite[Prop. 2.32]{San}.
\end{proof}

\begin{rem}The locus of double lines inside $\mP(V^{\star}_{4})$ is
    given by the $G$-orbit of $[\frac{\partial^{4}}{\partial x^{4}}]$.
    In particular it is a rational normal curve of degree $4$.
\end{rem}

\subsubsection{Invariant theory description of $\sH^{B}_{3}$.}

A subscheme $\Gamma$ of $B$ whose $H$-Hilbert polynomial is $3t+1$ is called a rational cubic. Indeed by \cite[Corollary 1.39]{San} we know that such subschemes have no embedded points and satisfy $h^1(\Gamma,\sO_\Gamma)=0$. We denote by $\sH^{B}_{3}$ the corresponding Hilbert scheme. We need an explicit description of $\sH^{B}_{3}$.

Let $\sF$ be the restriction to $B\subset G(2,V)$ of the dual of the universal bundle of rank two on $G(2,V)$; the reader may think $V$ as $V_4$; see section \ref{subsection:geometry of del Pezzo}. The projective bundle $\mP(\sF)\subset B\times \mP(V)$ is the family of lines in $\mP(V)$ parameterized by $B$. In \cite{C} Castelnuovo showed that
$B$ parameterizes tri-secant lines of the projected Veronese surface $\mP^2\simeq P\subset \mP(V)= \mP^4$; see: c.f. \cite[Lemma 7.1]{PV} or c.f \cite[Corollary 3.12]{San}. By Corollary \ref{veroneseproj} $P$ is the image of $\mP(V_2)$ inside $\mP(V_4)$ given by the square morphism of 
Lemma \ref{squaremorphism}. Hence we have a non obvious nice interpretation of the natural diagram:

\begin{equation}
\label{tris&line}
\xymatrix{ & \mP(\sF)\ \ar[dl]_{ \pi_{B}}
\ar[dr]^{\pi_{\sF}   }\\
 B&  &  \mP(V)}
\end{equation}
\begin{lem}\label{castelnuovo} The natural projection $\pi_{\sF} \colon \mP(\sF)\to \mP(V)$ is the $\mathbb P({\rm{SL}}(2,\mathbb C))$-invariant blow-up along $P$.
\end{lem}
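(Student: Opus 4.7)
The plan is to exhibit $\pi_\sF$ as a proper birational morphism between smooth varieties whose exceptional locus is an irreducible divisor fibered in $\mP^1$'s over $P$, and then invoke a Nakano-type contraction criterion to conclude it is the blow-up. The $\mP({\rm{SL}}(2,\mC))$-equivariance will come for free from the construction.

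First I would verify the basic set-up. Since $\sF$ has rank $2$ on the smooth threefold $B$, the total space $\mP(\sF)$ is smooth of dimension $4$, matching $\dim\mP(V)=4$. Properness of $\pi_\sF$ is immediate because $\mP(\sF)\subset B\times\mP(V)$ is closed and $B$ is projective. Birationality follows from the Castelnuovo interpretation recalled just before the statement: $B$ is the parameter space of trisecant lines of the projected Veronese surface $P\subset\mP(V)$ (see \cite{C} and cf.\ \cite[Lemma 7.1]{PV}, \cite[Corollary 3.12]{San}), and through a general point of $\mP(V)$ there passes a unique such trisecant, so $\deg\pi_\sF=1$.

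Next I would determine the exceptional locus. For $v\in\mP(V)\setminus P$ one checks that $\pi_\sF^{-1}(v)$ is zero-dimensional. For $v=[\omega^2]\in P$, use the square embedding $\Box_W\colon\mP(W)\to\mP(V)$ of Lemma \ref{squaremorphism} to write $P=\Box_W(\mP(W))$; a trisecant through $v$ then corresponds to an unordered pair $\{[\omega_1],[\omega_2]\}\subset\mP(W)$ such that the three squares $\omega^2,\omega_1^2,\omega_2^2$ are linearly dependent in $V$. A short invariant-theoretic computation, reduced by $\mP({\rm{SL}}(2,\mC))$-equivariance to a representative in each orbit of $\mP(W)$ (in particular the orbits of $[x^2]$ and $[xy]$), shows that this locus is a smooth $\mP^1$. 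Hence $E:=\pi_\sF^{-1}(P)$ is an irreducible Cartier divisor, and $E\to P$ is a $\mP^1$-bundle.

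Finally, since both $\mP(\sF)$ and $\mP(V)$ are smooth, $P$ is smooth of codimension $2$, and $\pi_\sF$ is a proper birational morphism whose exceptional locus is an irreducible divisor fibered in $\mP^1$'s over $P$, the Fujiki--Nakano contraction criterion identifies $\pi_\sF$ with the blow-up $\mathrm{Bl}_P\mP(V)$. Equivariance is automatic: $V=V_4$, $B$, and $\sF$ all carry compatible $\mP({\rm{SL}}(2,\mC))$-actions, the surface $P$ is $\mP({\rm{SL}}(2,\mC))$-invariant (cf.\ Corollary \ref{veroneseproj}), and $\pi_\sF$ is $\mP({\rm{SL}}(2,\mC))$-equivariant by construction. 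The delicate part is the fiber analysis above: one must rule out jumping of fiber dimension along $P$, verifying that $\pi_\sF^{-1}(v)$ is exactly a smooth $\mP^1$ for every $v\in P$, and also check the conormal condition (that $\sO_{\mP(\sF)}(-E)$ restricts to $\sO(1)$ along every fiber of $E\to P$) needed for Nakano's criterion. Once these are in place the identification with the blow-up is formal.
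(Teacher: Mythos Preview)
Your argument is correct and follows the same route as the paper's: determine the fibers of $\pi_\sF$ (a single point off $P$, a $\mP^1$ over each point of $P$) and then deduce from smoothness of source and target that the morphism is the blow-up along $P$, with equivariance coming for free from the construction. The only differences are cosmetic: the paper simply cites \cite[Lemma~7.1]{PV} for the fiber description rather than sketching an invariant-theoretic computation, and it does not name the contraction criterion explicitly; the conormal condition you flag is not separately checked in the paper either, and in this situation (proper birational between smooth fourfolds, irreducible exceptional divisor with all fibers $\mP^1$ over a smooth surface) it is automatic.
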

\begin{proof} Indeed by \cite[Lemma 7.1]{PV} $\pi_{\sF} \colon \mP(\sF)\to \mP(V)$ is one to one outside $P$ and the fiber over a point of $P$ is $\mP^1$.
Thus it is the blow-up along $P$ since both $\mP(\sF)$ and $\mP(V)$ are smooth fourfolds. All the maps are $\mathbb P({\rm{SL}}(2,\mathbb C))$-equivariant if we identify $V=V_4$.
\end{proof}

We briefly review \cite[Proposition 2.46]{San}, \cite[Section 3 page 57]{San}, \cite[Proposition 3.16]{San} . Let $E_{\sF}$ be the $\pi_{\sF}$-exceptional divisor. Let $l\subset \mP(V)$ be a line and $l'\subset \mP(\sF)$ its strict transform. Then $-K_{\mP(\sF)}=\pi^*\sO_{\sF}(5)-E_{\sF}$.
Thus, if $P\cap l=\emptyset$, then $-K_{\mP(\sF)}\cdot l'=5$.
Let $T_{\sF}$ be the tautological divisor on $\mP(\sF)$, which is the pull-back of 
$\sO_{\mP(V)}(1)$; see: \cite[Proposition 3.10]{San}. Then $T_{\sF}\cdot l'=1$. Let $\pi_B\colon  \mP(\sF)\to B$ be the natural projection and $\sO_{\mP(\sF)}(H')$ be the $\pi_B$-pull-back of $\sO_B(1)$.
Then by the canonical bundle formula for $\mP^1$-bundle $-K_{\mP(\sF)}=2T_{\sF}+H'$ 
since $-K_B=2H$ and since we know by the standard relative tautological sequence for $\pi_B^\star(\sF)$ that $\det \sF=\sO_B(1)$. Therefore $H\cdot l' =5-2=3$.
Thus the image of $l'$ on $B$ is a twisted cubic. In case $P\cap l\not =\emptyset$, $l$ corresponds to a degenerate twisted cubic. More precisely if $l$ is unisecant to $P$ then its pull back is ${\widehat{l}}+\zeta$ where $\zeta\subset E$ is the $\mP^1$ inside $\mP(\sF)$ such that $\pi_{\sF}(\zeta)$ is the unique point $P\cap l$. Hence $H\cdot \pi_{B}({\widehat{l}})=2$ and $H\cdot \pi_{B}(\zeta)=1$; that is $\pi_{B}(\zeta)$ is a line which intersects the conic $\pi_{B}({\widehat{l}})$ but obviously they are not coplanar since $B$ is an intersection of quadrics.
 If $l$ is bisecant to $P$ then  its pull back is ${\widetilde{l}}+\zeta+\zeta'$ and the image $\pi_{B}({\widetilde{l}})$ is a line. Finally if $l$ is a trisecant line then its pull back is $l^{0}+\zeta+\zeta'+\zeta''$ and $\pi_{B}(l^{0})$ is the common point on $B$ of the three lines $\pi_{B}(\zeta), \pi_{B}(\zeta') ,\pi_{B}(\zeta'')$. There are also some degenerations depending on degenerate intersection between $P$ and $l$. In any case the crucial point is that no line is contained inside $P$.

\begin{lem}\label{lines in V} The map $\mathbb G(2,5)\to \sH^B_3$ induced by $[l] \mapsto [\pi_{B\star}\pi_{\sF}^{\star}(l)]$ is a $\mathbb P({\rm{SL}}(2,\mathbb C))$-isomorphism.
\end{lem}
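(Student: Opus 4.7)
The plan is to exhibit $\Phi \colon \mathbb{G}(2,5) \to \sH^B_3$ as the classifying morphism of a flat family assembled from the universal line and the diagram \eqref{tris&line}, to verify $G$-equivariance, and then to conclude $\Phi$ is an isomorphism by checking bijectivity on closed points and invoking Zariski's Main Theorem.

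First I would check well-definedness. Let $\mathcal S \subset V \otimes \sO_{\mathbb{G}(2,5)}$ be the tautological subbundle, so that $\mP(\mathcal S) \subset \mathbb{G}(2,5)\times \mP(V)$ is the universal line, flat over $\mathbb{G}(2,5)$. Pulling back along $\id \times \pi_\sF$ produces a family of curves in $\mathbb{G}(2,5) \times \mP(\sF)$, and pushing forward along $\id \times \pi_B$ gives a family of subschemes of $\mathbb{G}(2,5) \times B$. The case analysis immediately preceding the lemma (four strata according to the schematic intersection $l \cap P$) shows every fiber has $H$-Hilbert polynomial $3t+1$ and arithmetic genus zero; since the base is reduced and irreducible, constancy of the Hilbert polynomial yields flatness, defining the morphism $\Phi$ to $\sH^B_3$. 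Both $\pi_\sF$ and $\pi_B$ are $G$-equivariant by Lemma \ref{castelnuovo}, hence so is $\Phi$.

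Next I would verify bijectivity on closed points. Over the open stratum of lines $l \subset \mP(V)$ disjoint from $P$, the image $\Gamma := \Phi([l])$ is a smooth twisted cubic with $\deg \det \sF|_\Gamma = \sO_B(1)\cdot \Gamma = 3$; for a general such $\Gamma$ the splitting is $\sF|_\Gamma \simeq \sO(2) \oplus \sO(1)$ on $\Gamma \simeq \mP^1$, and the unique $\sO(2)$-subbundle furnishes a section of $\pi_B^{-1}(\Gamma) = \mP(\sF|_\Gamma)$ whose image under $\pi_\sF$ is the original line $l$. This recovers $l$ uniquely from $\Gamma$ and gives a regular inverse on this open locus. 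The degenerations are handled by the dictionary between $l \cap P$ and the decomposition of $\pi_\sF^*(l)$ already established: unisecants yield conic-plus-line cubics, bisecants yield three lines with one distinguished component, trisecants yield three concurrent lines. Using the explicit blow-up description of $\pi_\sF$ in Lemma \ref{castelnuovo} and $G$-equivariance to reduce to orbit representatives, each such degenerate cubic on $B$ is shown to correspond to a unique line in $\mP(V)$.

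Finally, $\mathbb{G}(2,5)$ is smooth and projective of dimension $6$, and $\sH^B_3$ is irreducible of dimension $6$ as well, since $-K_B \cdot \Gamma = 6$ and $h^1(N_{\Gamma/B}) = 0$ for the general smooth twisted cubic. Bijectivity on closed points and equality of dimensions make $\Phi$ a proper bijective birational morphism, and target normality then yields via Zariski's Main Theorem that $\Phi$ is an isomorphism. The main obstacle is the bijectivity on degenerate cycles: one must rule out distinct lines $l_1, l_2 \subset \mP(V)$ producing the same reducible or non-reduced cubic on $B$, and the $G$-equivariant blow-up description of $\pi_\sF$ together with Lemma \ref{castelnuovo} is the tool that makes this verification manageable.
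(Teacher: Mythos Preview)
Your approach matches the paper's: the discussion preceding the lemma carries out exactly the case analysis you describe, and the paper's own proof simply refers back to that discussion and cites \cite[Proposition~2.46]{San} for the formal details. Your sketch fleshes out what the paper leaves to the reference, and the inverse via the $\sO(2)$--subbundle of $\sF|_\Gamma$ is a nice way to see injectivity on the open stratum.

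There is, however, a genuine gap at the final step. You invoke Zariski's Main Theorem using ``target normality'', but you never establish that $\sH^B_3$ is normal (or even reduced). Hilbert schemes have no such property in general, and your argument ``$-K_B\cdot\Gamma=6$ and $h^1(N_{\Gamma/B})=0$ for the general smooth twisted cubic'' only shows that the component of smooth cubics is generically smooth of dimension~$6$; it says nothing about smoothness or reducedness at the degenerate points, which is exactly where a bijective morphism from a smooth source can fail to be an isomorphism (think of the normalization of a cusp). To close this you would need to verify $h^1(N_{\Gamma/B})=0$ for \emph{every} $\Gamma$ with Hilbert polynomial $3t+1$, including the reducible and non-reduced ones --- this is the content hidden in the reference to \cite{San}. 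A related technical point: defining the family by ``pushing forward along $\id\times\pi_B$'' is delicate precisely over the trisecant locus, where $\pi_B$ contracts the strict transform $l^0$ to a point; you should say explicitly that you take the scheme-theoretic image and then check its Hilbert polynomial fiberwise.
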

\begin{proof} See the above discussion. A formal proof is given in \cite[Proposition 2.46]{San}. \end{proof}

\subsubsection{The scheme of polarised hyperplane sections.}

In the same vein and for later use we need to have a rough description of an open set of the scheme of polarised hyperplane sections of the del Pezzo threefold $B$.

Following \cite{San} we can define $D_{{\rm{tri}}}$ to be the divisor of those $[K]\in \mathbb G(3,V)$ such that $K$ contains a line trisecant to $P$. By 
\cite[Lemma 3.28]{San} we know that for every $K\in  \mathbb G(3,V)\setminus D_{{\rm{tri}}}$ the morphism $\pi_K\colon S_K\to B_K$ is an isomorphism where $S_K$ is the $\pi_{\sF}$-strict transform of $K$ and $\pi_K\colon S_K \to B_K$ is the morphism induced by the restriction to $S_K$ of $\pi_B\colon \mathbb P(\sF)\to B$. Set 
\begin{equation}\label{openopen}\mathring{\mathbb G}:=\{[K]\in  \mathbb G(3,V)\mid K_{\mid P}\, {\rm{are\, 4\, distinct \, points}}\}.
\end{equation}
Obviously $\mathring{\mathbb G}\hookrightarrow  \mathbb G(3,V)\setminus D_{{\rm{tri}}}$ is an open embedding. We can define the morphism
\begin{equation}\label{alpha}
\alpha\colon\mathring{\mathbb G}  \to \check\mP^6,\, [K] \mapsto [H_K]
\end{equation}
where, letting $B_K=\pi_{B\star}\pi_{\sF}^{\star}(K)$, $H_K$ is the unique hyperplane of $\mP^6$ such that $B_{|H}=B_K$. Let 
$$
 \mathring{\sK}:=\{([K],b,[H])\in\mathring{\mathbb G}\times B\times\check\mP^6|, B_{|H}=B_K, \, {\rm{and}}\,  b\in H\cap B\}
 $$
\begin{lem} \label{planes in V} 
$\mathring{\mathbb G}$ is isomorphic to the parameter space of points $([H], \sO_Z(\lambda))$ where $[H]\in\check\mP^6$, $Z=H\cap B$ and $\sO_Z(\lambda)$ is an invertible sheaf on $Z$ such that $\phi_{|\lambda|}\colon Z\to \mP^2$ is the blow-down of $4$ rational curves. Moreover  
$\alpha\colon  \mathring{\mathbb G} \to \check\mP^6$ is compatible with the natural forgetful morphism  $\mathring{\sK}\to\check\mP^6$.
\end{lem}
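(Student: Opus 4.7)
The plan is to construct a $\check\mP^6$-morphism from $\mathring{\mathbb G}$ to the parameter space of pairs $([H],\sO_Z(\lambda))$ appearing in the statement, verify bijectivity on closed points by matching both fibers with the five blow-down structures on a smooth quintic del Pezzo, and identify the main technical obstacle as a coplanarity statement.

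For the forward map, let $[K]\in\mathring{\mathbb G}$. Lemma \ref{castelnuovo} presents $S_K$ as the blow-up of $K\simeq\mP^2$ at the four distinct points $K\cap P$, and by the assumption $\mathring{\mathbb G}\subset\mathbb G(3,V)\setminus D_{{\rm tri}}$ the projection $\pi_K\colon S_K\to B_K$ is an isomorphism. I set $\sO_Z(\lambda_K):=(\pi_K)_\star\bigl(\pi_\sF^\star\sO_{\mP(V)}(1)|_{S_K}\bigr)$ on $Z=B_K$. Then $\phi_{|\lambda_K|}$ coincides, via $\pi_K^{-1}$, with $\pi_\sF|_{S_K}\colon S_K\to K\simeq\mP^2$, which contracts four rational curves (the images of the exceptional fibers of $\pi_\sF$ over $K\cap P$). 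The assignment $[K]\mapsto([H_K],\sO_Z(\lambda_K))$ is algebraic and, by the definition of $\alpha$ in (\ref{alpha}), compatible with the forgetful morphism $\mathring{\sK}\to\check\mP^6$.

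For the inverse, given $([H],\sO_Z(\lambda))$ with $\phi_{|\lambda|}\colon Z\to\mP^2$ contracting four disjoint rational curves $\epsilon_1,\ldots,\epsilon_4$, each $\epsilon_i$ is a $(-1)$-curve of $H$-degree $1$ on the quintic del Pezzo $Z=H\cap B$, hence a line on $B$. Proposition \ref{lines} and Corollary \ref{veroneseproj} send $[\epsilon_i]\in\sH^B_1=\mP(V_2)$ to a point $q_i\in P\subset\mP(V)$ via the square morphism, and I define $K_{Z,\lambda}\subset\mP(V)$ as the linear span of $\{q_1,\ldots,q_4\}$. The main technical step is to show that the $q_i$ are coplanar, so that $K_{Z,\lambda}$ is a genuine $\mP^2$ and $K_{Z,\lambda}\cap P=\{q_1,\ldots,q_4\}$. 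I would establish coplanarity by interpreting each line on $B$ as a pencil of lines in $\mP(V)$ with common vertex $q_i$, and using that the four lines lie in the common two-dimensional hyperplane section $Z\subset B$: the induced rational map $Z\dashrightarrow\mP(V)$ factors through $|\lambda|$, whose $h^0(Z,\lambda)=3$, forcing the $q_i$ into a common $\mP^2$.

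Once coplanarity is established, the two constructions are mutually inverse on closed points: starting from $[K]$, the $q_i$ recover $K\cap P$ and hence span $K$; starting from $([H],\sO_Z(\lambda))$, the forward construction applied to $K_{Z,\lambda}$ returns $([H],\sO_Z(\lambda))$ by the chain of identifications. Both $\mathring{\mathbb G}$ and the parameter space are 6-dimensional, each a 5-sheeted cover of the open locus of $\check\mP^6$ parametrizing smooth hyperplane sections, so bijectivity on closed points upgrades to an algebraic isomorphism. Compatibility with $\alpha$ and with the forgetful morphism $\mathring{\sK}\to\check\mP^6$ is immediate from the construction.
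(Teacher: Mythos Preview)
Your overall architecture matches the paper's: a forward map $[K]\mapsto([H_K],\sO_Z(\lambda_K))$ built from the diagram $B\leftarrow\mP(\sF)\to\mP(V)$, followed by an inverse construction and a count showing both sides are degree-$5$ covers of the locus of smooth hyperplane sections. The forward direction and the concluding degree argument are fine.

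The genuine gap is the coplanarity step. You propose to show that the four vertices $q_i\in P\subset\mP(V)$ span only a plane by invoking ``the induced rational map $Z\dashrightarrow\mP(V)$'' and claiming it factors through $|\lambda|$. But there is no such rational map available here: a point $b\in Z\subset B\subset G(2,V)$ determines a \emph{line} $\ell_b\subset\mP(V)$, not a point, and there is no natural section of $\mP(\sF|_Z)\to Z$ that would single one out. The three-valued correspondence $b\mapsto \ell_b\cap P$ does not collapse to a morphism given by $|\lambda|$, so the sentence ``$h^0(Z,\lambda)=3$ forces the $q_i$ into a common $\mP^2$'' has no content as written. Without coplanarity you cannot define $K_{Z,\lambda}$, and the inverse construction collapses.

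The paper avoids this issue by building the inverse through $\sH^B_2$ rather than $\sH^B_1$. Given the four disjoint lines $e_1,\ldots,e_4\subset Z$, it observes that these determine a pencil of conics on $B$; via the identification $\sH^B_2\simeq\mP(V^\star)$ of Proposition~\ref{coniche}, a pencil of conics is a line in $\mP(V^\star)$, and its dual base locus is the desired plane $K\subset\mP(V)$. The square morphism then guarantees that this $K$ meets $P$ exactly in the four prescribed points. In effect the paper produces $K$ directly as a plane, so coplanarity of the $q_i$ is a consequence rather than a hypothesis to be checked. If you want to keep your span-of-$q_i$ description, the cleanest fix is to import this pencil-of-conics argument to certify that the span is indeed two-dimensional.
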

\begin{proof} Since the fibers of the natural projection $\mathring{\sK}\to \mathring{\mathbb G}$ are irreducible $\alpha$ factorise the Stein factorisation of the natural projection $\mathring{\sK}\to \check\mP^6$. We have only to show that given a smooth section $Z$ of $B$ inside $\mP^6$ and a polarisation $\lambda$ there exists $[K]\in \mathring{\mathbb G}$ such that $Z=B_K$. Indeed to give such a $\lambda$ is equivalent to give four disjoint lines $e_1, e_2,e_3,e_4\subset Z$ such that $\phi_{|\lambda|}\colon Z\to \mP^2$ is the blow-up at four points $a_i$ where $e_i=\phi_{|\lambda|}^{-1}(a_i)$, $i=1,2,3,4$. To give such four points 
$[e_1],[e_2],[e_3],[e_4]\in\sH^B_1=\mP(W)$ is equivalent to give a pencil of conics inside $B$ that is to give a line inside $\mP(V^*)$ via its identification to $\sH^B_2$ given in Proposition \ref{coniche}. By construction of the square morphism and by the interpretation of it for the geometry of $B$, the base locus of the corresponding pencil of hyperplanes is the claimed plane $K\subset \mP(V)$.
\end{proof}

Note that we do not know if any element $[K]\in \mathbb G(3,V)$ can be seen as a couple $([H], \sO_Z(\lambda))$ where 
$[H]\in\check\mP^6$, $Z=H\cap B$ and $\sO_Z(\lambda)$ is an invertible sheaf on $Z$. In any case we are interested in a rationality problem and the standard $\mathbb P{\rm{SL}}(2,\mathbb C)$-action on $B$ extends to our construction in a equivariant way. We recall that we have set $G= \mathbb P{\rm{SL}}(2,\mathbb C).$

 \begin{lem}\label{quotient grassmannian} It holds:
 \begin{enumerate}
 \item$\mathring{\mathbb G}$ is a $G$-invariant open subscheme of $\mathbb G(3,V)$. 
 \item The $[K]$-stabiliser is trivial if $[K]\in \mathring{\mathbb G}$ is a general element.
 \item $\mathring{\mathbb G}//G$ is birational to $\mP^3$.
 \end{enumerate}
 \end{lem}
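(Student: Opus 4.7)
For (1), since the square embedding $\Box_W\colon\mP(W)\to\mP(V)$ is $G$-equivariant by Lemma~\ref{squaremorphism}, its image $P\subset\mP(V)$ is a $G$-invariant surface; hence the condition defining $\mathring{\mathbb G}$ (that $\mP(K)\cap P$ is reduced of length $4$) is $G$-invariant, and it is open because its complement is the discriminant locus of the length-$4$ intersection scheme.

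For (2), a general $[K]\in\mathring{\mathbb G}$ determines four distinct points $p_1,\ldots,p_4$ on $P\simeq\mP(V_2)$; any $g$ in the stabilizer of $[K]$ must permute these. Under $\mP(V_2)\simeq\mathrm{Sym}^2\mP(V_1)$ the four points correspond to four unordered pairs on $\mP^1$, i.e., a degree-$8$ divisor on $\mP^1$ equipped with a pairing structure. The $3$-transitivity of $G=\mathrm{PGL}(V_1)$ on $\mP^1$ together with $\dim G=3$ forces the generic stabilizer to be trivial by a dimension count.

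For (3), the dimension count $\dim\mathring{\mathbb G}//G = 6-3=3$ matches that of $\mP^3$. Introduce the marked variety
\[
\widetilde{\mathring{\mathbb G}} := \{([K],(p_1,p_2)) : [K]\in\mathring{\mathbb G},\ p_1\ne p_2 \in K\cap P\},
\]
a $12$-sheeted $G$-equivariant cover of $\mathring{\mathbb G}$ with Galois group $S_4$. The ``chord projection'' $\widetilde{\mathring{\mathbb G}}\to (P\times P)\setminus\Delta$, $([K],(p_1,p_2))\mapsto(p_1,p_2)$, is $G$-equivariantly a $\mP^2$-fibration, with fibre over $(p_1,p_2)$ the space $\mP(V/\langle p_1,p_2\rangle)$ of planes containing the chord. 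The base quotient $((P\times P)\setminus\Delta)//G$ is birational to $\mP^1$: normalizing the unordered pair $p_1$ to $\{0,\infty\}$ and one endpoint of $p_2$ to $1$ uses $G$ up, and the remaining endpoint of $p_2$ is a parameter in $\mA^1$ modulo the residual $\mathbb Z/2$-involution $z\mapsto z^{-1}$. Hence $\widetilde{\mathring{\mathbb G}}//G$ is a $\mP^2$-bundle over a rational curve, birational to $\mP^3$. Descending to $\mathring{\mathbb G}//G = (\widetilde{\mathring{\mathbb G}}//G)/S_4$: the $S_4$-action, induced from the permutation of the four base points, is birationally linear on the $\mP^3$-model (after trivializing the bundle $G$-equivariantly), and the quotient of $\mP^3$ by a linear $S_4$-action is rational.

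\textbf{Main obstacle.} The delicate point is the final descent by $S_4$: the $\mP^2$-fibration over $(P\times P)\setminus\Delta$ is not $S_4$-equivariant (since a transposition involving $p_3$ or $p_4$ changes the chord $\langle p_1,p_2\rangle$), so the linearity of the induced $S_4$-action on $\widetilde{\mathring{\mathbb G}}//G\sim\mP^3$ must be verified by a global analysis---most naturally by reinterpreting $\mathring{\mathbb G}$ via Lemma~\ref{planes in V} in terms of polarized hyperplane sections and identifying the symmetric group action with an explicit linear representation on the associated vector bundle.
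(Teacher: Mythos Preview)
Your treatment of (1) is fine. For (2), a dimension count alone does not yield a trivial generic stabiliser; one must actually exhibit a point with trivial stabiliser or argue directly. The paper does this concretely: writing $K\cap P=\{[\alpha_1^2],\dots,[\alpha_4^2]\}$ with $\alpha_i\in V_2$ and noting that any three of the $\alpha_i$ are linearly independent, the unique relation $\alpha_4^2=\sum_i\lambda_i\alpha_i^2$ would be carried by any non-trivial $g\in G$ permuting the $\alpha_i$ into a second, independent relation---impossible for generic $\lambda_i$.

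For (3) your route is genuinely different from the paper's and carries the gap you yourself flag. First a small correction: the cover $\widetilde{\mathring{\mathbb G}}\to\mathring{\mathbb G}$ obtained by marking an ordered pair among four points has degree $12$ but is not Galois (the stabiliser $S_2$ of an ordered pair is not normal in $S_4$), so $\mathring{\mathbb G}$ is not literally an $S_4$-quotient of $\widetilde{\mathring{\mathbb G}}$; for that you would need the degree-$24$ cover marking a full ordering. More seriously, even granting that the relevant cover modulo $G$ is rational, the assertion that the induced $S_4$-action is birationally linear is unproved, and rationality of the quotient of a rational $3$-fold by a finite group is not automatic without such linearity. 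You acknowledge this obstacle but do not resolve it, and the appeal to Lemma~\ref{planes in V} does not supply any linear structure on the symmetric-group action.

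The paper avoids the descent entirely by a direct computation. Using the $G$-equivariant duality $G(3,V_4)\simeq G(2,V_4^{\star})$ one reduces to showing that $G(2,V_4)//G$ is rational. A point of $G(2,V_4)$ is a pencil $\langle f,g\rangle$ of binary quartics; the $G$-action normalises $f$ to $xy(x-y)(ty-x)$, and an explicit calculation with the Pl\"ucker coordinates of $f\wedge g$ (setting $a_0=1$) then shows that two of the remaining coefficients of $g$ depend rationally on the other three, yielding a birational parametrisation by $(t,a_1,a_2)$. This is both shorter and complete; closing your argument would require substantial additional work on the $S_4$-quotient that the paper simply never needs.
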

 \begin{proof} The first claim is easy. To show the second one we can be back to the interpretation given in Corollary \ref{veroneseproj}. In other words we can consider the standard $G$ action on binary forms and assume that $K\cap P=\{[\alpha_1^2],...,[\alpha_4^2]\}$ where $\alpha_i\in V_2$. In particular w.l.g we can assume that there exists $\lambda_i\in \mathbb C$ such that
 \begin{equation}\label{equazionequadri}
 \alpha_4^2=\sum_{i=1}^3\lambda_i\alpha_i^2
 \end{equation}
 Now if there exists an element $[g]\in G$ which stabilises $K$ then $g$ has to permute the $\alpha_i$'s. This is clearly impossible since they are $3$ by $3$ independent and any other relation among them similar to Equation (\ref{equazionequadri}) gives a contradiction.

 To show the third one is the same as to show that $\mathbb G(3,V_4)//G$ is rational. Since  $G(3,V_4)$ is $G$-equivariantly isomorphic to $G(2,V_4^{\star})$ with the dual  $G$-action we are reduced to show that  $G(2,V_4)// G$ is birational to $\mP^3$ where $G$ is induced by the standard ${\rm{SL}}(2,\mathbb C)$-action on $V_4$. Any element $[U]\in G(2,V_4)$ is given by $U=\langle f,g\rangle$ where $f,g$ are two linearly independent quartic binary forms. By $G$-action we can take $f(x,y)=xy(x-y)(\lambda y-\mu x)$ where $[\lambda:\mu]\in\mP^1$. Set 
  $g:=\sum_{i=0}^{4}a_{i}
\left(\begin{array}{c}
 4 \\
 i   
\end{array}\right) x^{4-i}y^{i}$. Then we form the $5\times 5$ anti-symmetric matrix of the element $[f\wedge g]\in \mP(\bigwedge^{2}V_4)$. Finally we set $t:=\lambda/\mu$, $a_0=1$ and we impose the vanishing of the $5$ pfaffians according to the basic rules of \cite[ p.505]{MU}. 
Now it is very easy to show that $a_4$ and $a_3$ depends rationally on the free variables $t, a_1,a_2$.
\end{proof}

\subsection{ The del Pezzo threefold as the variety of sums of a conic}
There is another neat description of $B$ which makes transparent the meaning of the invariant conic $\Omega'$. 
\subsubsection{Polarity with respect to a conic and lines of  the del Pezzo $3$-fold}
 Let $\{\check{F}_2=0\}\subset \check{\mP}^2$ be
a smooth conic. Set
\[
\VSP(\check{F}_2,3)^o:=
\{([H_1], [H_2], [H_3])\mid H_1^2+H_2^2+H_3^2=\check{F}_2\}
\subset \Hilb^3 {\mP}^2,
\]
where ${\mP}^2$ is the dual plane to $\check{\mP}^2$,
thus linear forms $H_i$ of  $\check{\mP}^2$ ($i=1,2,3$) can be considered as
points in ${\mP}^2$. 
Mukai showed in \cite{Mu2} that
$B$ is isomorphic to 
the closed subset 
$\VSP(\check{F}_2,3):=\overline{\VSP(\check{F}_2,3)^o}\subset 
\Hilb^3 {\mP}^2$
where this ${\mP}^2$ is isomorphic to $\sH^B_1$. The variety $\VSP(\check F_2,3)$ has the natural action of
the subgroup ${\rm{SO}}(3,\mathbb C)$ of the automorphism group $\PGL_3$ of 
${\mP}^2$
consisting of elements which preserve $\{\check{F}_2=0\}$.
This group is isomorphic to $G$,
and the conic is the unique one which is $G$-invariant.
By definition of $\VSP(\check{F}_2,3)^o$, it is easy to see that
$G$ acts on $\VSP(\check{F}_2,3)^o$ transitively.
Thus we recover also in this way that $B$ is a quasi-homogeneous $G$-variety.
Actually, it is proved in \cite{PV} that $G$ is the automorphism group of $B$. Moreover, Mukai showed that,
for a point $b:=([H_1],[H_2], [H_3])\in \VSP(\check{F}_2,3)^o
\subset B$, the points
$[H_i]\in {\mP}^2$ ($i=1,2,3$) represent three lines through $b$.
By definition of $\VSP(\check{F}_2,3)^o$ and by the
transitivity of the action of $G$ on $\VSP(\check{F}_2,3)^o$, 
it is easy to show the following claim:
\begin{cla}
\label{cla:doubly}
$G$ acts transitively 
on the set of unordered pairs of intersecting lines
whose intersection points are contained in 
$\VSP(\check{F}_2,3)^o$.
\end{cla}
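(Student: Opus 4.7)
My plan is to reduce the claim to a computation on the stabiliser of a single intersection point, and then exhibit an explicit element performing the required permutation.

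Given two unordered pairs $\{l_1, l_2\}$ and $\{l_1', l_2'\}$ of intersecting lines of $B$ meeting at points $b, b' \in \VSP(\check{F}_2,3)^o$ respectively, I would first invoke the transitivity of $G$ on $\VSP(\check{F}_2,3)^o$, just recorded above, to pick some $g \in G$ with $g(b) = b'$. Replacing $\{l_1, l_2\}$ by $\{g(l_1), g(l_2)\}$ one may assume $b = b'$. This reduces the problem to showing that the stabiliser $G_b$ acts transitively on the set of unordered pairs of lines of $B$ through the fixed point $b$.

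Next, I would apply the Mukai description of $\VSP(\check{F}_2,3)^o$ recalled just above: if $b = ([H_1], [H_2], [H_3])$, then the three points $[H_i]$ parametrise exactly the three lines $L_1, L_2, L_3$ of $B$ through $b$. Taking complements, the map $\{L_i, L_j\} \mapsto [H_k]$ (for $\{i,j,k\}=\{1,2,3\}$) is a $G_b$-equivariant bijection from the three unordered pairs of such lines onto $\{[H_1],[H_2],[H_3]\}$. Hence it is enough to check that $G_b$ acts transitively on $\{[H_1],[H_2],[H_3]\}$.

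For this last step I would use the identification $G \cong \mathrm{SO}(3, \mC)$ as the automorphism subgroup of $\PGL_3$ preserving $\{\check{F}_2 = 0\}$, and choose coordinates in which $\check{F}_2 = X^2 + Y^2 + Z^2$ and $([H_1], [H_2], [H_3]) = ([X], [Y], [Z])$. The matrix
\[
M = \begin{pmatrix} 0 & 1 & 0 \\ 0 & 0 & 1 \\ 1 & 0 & 0 \end{pmatrix}
\]
satisfies $M^{T} M = I$ and $\det M = 1$, hence lies in $\mathrm{SO}(3, \mC)$, and acts as a $3$-cycle on the three coordinate points $[X], [Y], [Z]$; this produces the required element of $G_b$.

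I do not foresee a genuine obstacle: the argument is essentially an unwinding of Mukai's description combined with the standard $\mathrm{SO}(3,\mC)$-symmetry of the identity $H_1^2 + H_2^2 + H_3^2 = \check{F}_2$. The only point that deserves a moment's care is the $G_b$-equivariance of the complement bijection between unordered pairs and singletons, which is immediate once one notes that $G_b$ permutes $\{L_1,L_2,L_3\}$ as a set.
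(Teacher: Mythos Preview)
Your proof is correct and is precisely the argument the paper has in mind: the paper does not actually spell out a proof but only remarks that the claim follows ``by definition of $\VSP(\check{F}_2,3)^o$ and by the transitivity of the action of $G$ on $\VSP(\check{F}_2,3)^o$'', and your reduction to the stabiliser $G_b$ together with the explicit cyclic $\mathrm{SO}(3,\mC)$-element is exactly the unwinding of that remark.
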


Let ${F}_2$ be the quadratic form dual to $\check{F}_2$ and set
\[
\Omega'':=\{{F}_2=0\}
\]
for the associated conic in 
${\mP}^2$.
The conic $\Omega''\subset {\mP}^2$
is the unique one
invariant under the induced action of $G$ on $\sH^B_1$.
Moreover,
$G$ is exactly the closed subgroup of $\Aut {\mP}^2\simeq \PGL_3$
whose elements preserve $\Omega''$. 

In Corollary \ref{veroneseproj}, see also \cite[\S 2.5]{I}, we have shown that
there exists a conic $\Omega'$ in ${\mP}^2$
such that,
for $[l]\in {\mP}^2- \Omega'$ (resp. 
for $[l]\in \Omega'$),
it holds $\sN_{l/B}=\sO_{l}\oplus\sO_{l}$
(resp. $\sN_{l/B}\simeq 
\sO_{\mP^1}(-1)\oplus \sO_{\mP^1}(1)$).
Obviously $\Omega'$ is invariant under the action of $G$,
hence we have $\Omega'=\Omega''$. Hence $\Omega=\Omega''$. From now on we will not distinguish between $\Omega',\Omega''$ and $\Omega$.

\begin{defn}
A line $l$ on $B$ is called a {\em{special line}} if
$\sN_{l/B}\simeq 
\sO_{\mP^1}(-1)\oplus \sO_{\mP^1}(1).$
\end{defn}

The following Proposition is treated in \cite[4.2]{D1} and it will play a fundamental role in the Reconstruction Theorem; see: Theorem \ref{perlomodulothree}.
\begin{prop}
\label{prop:polar}
Let $\widetilde{\Omega}$ be the symmetric bi-linear form
associated to $\Omega$. Then 
two lines $l$ and $m$ on $B$ intersect if and only if 
$\widetilde{\Omega}([l],[m])=0$. In particular there is a $G$-identification between $(\sH^B_1,\Omega)$ and $(\mP^2, Q)$ where $Q$ is a smooth conic.
\end{prop}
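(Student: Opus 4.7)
The plan is to establish the equality of two $G$-invariant subvarieties of $\sH^B_1\times\sH^B_1\simeq\mP^2\times\mP^2$: the incidence locus
\[
I:=\overline{\{([l],[m])\mid l\neq m,\, l\cap m\neq\emptyset\}}
\]
and the polar divisor
\[
D:=\{([l],[m])\mid\widetilde{\Omega}([l],[m])=0\}.
\]
Both are $G$-invariant: the former because $G$ acts on $B$ through automorphisms permuting lines, and the latter because $\Omega$ is $G$-invariant, hence so is its polarization up to scalar. Note that $D$ is an irreducible divisor of bidegree $(1,1)$ since $\widetilde{\Omega}$ is a non-degenerate symmetric bilinear form on $V_2$.

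Next I would verify that $I$ is an irreducible threefold. For each $b$ in the open orbit $G([xy(x^{4}+y^{4})])\subset B$, the three lines through $b$ give finitely many ordered pairs, so $I$ has dimension $3$. By Claim \ref{cla:doubly}, $G$ acts transitively on the locus of unordered pairs of distinct meeting lines whose intersection lies in $\VSP(\check{F}_2,3)^o$, which is a dense open $U\subset I$; thus $U$ is a single $G$-orbit and therefore irreducible, whence $I=\overline{U}$ is irreducible.

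Then I would exhibit one explicit intersecting pair satisfying the polarity condition. Using the Mukai $\VSP(\check{F}_2,3)$-description, choose a basis $e_1,e_2,e_3$ of $V$ in which $\check{F}_2=e_1^2+e_2^2+e_3^2$. The unordered triple $\{[e_1],[e_2],[e_3]\}$ lies in $\VSP(\check{F}_2,3)^o$, so the two lines $[e_1]$ and $[e_2]$ of $B$ meet at this point. The dual form $F_2\in\mathrm{Sym}^2 V^*$ defines $\Omega''=\Omega$, and its polarization $\widetilde{F}_2$ gives $\widetilde{F}_2(e_i,e_j)=0$ for $i\neq j$. Therefore $\widetilde{\Omega}([e_1],[e_2])=0$, so this specific pair lies in $D$.

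By $G$-invariance of $D$ and transitivity on $U$, the entire orbit $U$ is contained in $D$; taking closures, $I\subseteq D$. Since both $I$ and $D$ are irreducible threefolds in $\mP^2\times\mP^2$, we conclude $I=D$, which is precisely the polarity characterization. The final $G$-identification $(\sH^B_1,\Omega)\simeq(\mP^2,Q)$ is then immediate: Proposition \ref{lines} gives $\sH^B_1\simeq\mP^2$ as $G$-varieties, and the discussion preceding the statement (identifying $\Omega'=\Omega''=\Omega$) shows that $\Omega$ is a smooth conic, providing $Q$. The main obstacle is the irreducibility of $I$, which rests on Claim \ref{cla:doubly}; without the transitive $G$-action one would have to carefully rule out extra components of the incidence scheme by separate dimension or combinatorial arguments.
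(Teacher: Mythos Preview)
Your argument is sound and actually goes beyond what the paper does: the paper does not prove this proposition at all but simply refers to \cite[4.2]{D1}. Your strategy of comparing the two $G$-invariant threefolds $I$ and $D$ in $\mP^2\times\mP^2$, exhibiting one explicit point in $I\cap D$ via the Mukai $\VSP$ description, and then using the $G$-action to conclude $I\subseteq D$ (hence $I=D$ by irreducibility of $D$ and equality of dimensions) is a clean self-contained approach.

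One small imprecision: Claim~\ref{cla:doubly} gives transitivity on \emph{unordered} pairs, so it does not literally imply that your open set $U\subset\mP^2\times\mP^2$ of \emph{ordered} pairs is a single $G$-orbit. This is easily repaired in either of two ways. First, you do not actually need $I$ irreducible: once $U\subseteq D$ is established, any $3$-dimensional component of $I$ is contained in the irreducible threefold $D$ and hence equals $D$, forcing $I=D$. For $U\subseteq D$ itself, transitivity on unordered pairs together with the \emph{symmetry} of $D$ (since $\widetilde{\Omega}$ is symmetric) already suffices. Alternatively, you can observe that the stabilizer in $G\simeq\mathrm{SO}(3,\mathbb{C})$ of the point $b=\{[e_1],[e_2],[e_3]\}$ contains the signed permutation matrices of determinant one, which act as the full $S_3$ on the three lines through $b$; this upgrades transitivity to ordered pairs directly. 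Either way the conclusion stands.
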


\begin{defn}\label{polarelinea} Two points $[l],[m]\in\sH^B_1$  are said to be {\em polar} with respect to
$\widetilde{\Omega}$ if $\widetilde{\Omega}([l],[m])=0$. 
\end{defn}

\begin{rem} In other words two lines $l,m\subset B$ are incident iff the corresponding points $[l],[m]\in\sH^B_1$  are polar with respect to
$\widetilde{\Omega}$.
\end{rem}

\subsection{The $3$-to-$1$ cover of $B$ given by the universal family of lines}

 Our construction of spin curves relies on the geometry of the following diagram which has been deeply studied in \cite[\S 2]{FuNa}:
\begin{equation}
\label{universal&projection}
\xymatrix{ & \sU_1 \ar[dl]_{{ \pi}}
\ar[dr]^{\varphi}\\
 \sH^B_1=\mP^{2} &  & B\subset \mP^{6} }
\end{equation}
\noindent where we denote by $\pi\colon\sU_{1}\to \sH^{B}_{1}$ and by 
$\varphi\colon\sU^{B}_{1}\to B$ the natural morphisms induced by 
respectively the natural projections $B\times 
\sH^{B}_{1}\rightarrow\sH^{B}_{1}$, $B\times \sH^{B}_{1}\rightarrow 
B$.

\begin{nota}\label{nota}
For an irreducible curve $R$ on $B$, 
denote by $M(R)$ the locus $\subset \sH^B_1$ of lines intersecting $R$,
namely, $M(R):=\pi(\varphi^{-1}(R))$ with reduced structure. 
Since $\varphi$ is flat, $\varphi^{-1}(R)$ is purely one-dimensional.
If $\deg R\geq 2$, 
then $\varphi^{-1}(R)$ does not contain a fiber of 
$\pi$, thus $M(R)$ is a curve.
See Proposition \ref{prop:FN} for the description of $M(R)$
in case $R$ is a line of $B$.  
\end{nota}

\begin{prop} 
\label{prop:FN}
It holds$:$ 
\begin{enumerate}[$(1)$]
 \item
the union of special lines is
the branched locus $B_{\varphi}$ of 
 $\varphi\colon \mP\to B$.
$B_{\varphi}$ has the following properties:
\begin{enumerate}[$({1}\text{-}1)$]
\item $B_{\varphi}\in |-K_{B}|$, 
\item $\varphi^*B_{\varphi}=B_{\varphi,_1}+2B_{\varphi, 2}$,
where $B_{\varphi,_1}\simeq B_{\varphi,_2}\simeq \mP^1\times \mP^1$, and
$\varphi\colon B_{\varphi,_1}\to
B_{\varphi}$ and 
$\varphi\colon B_{\varphi,_2}\to
B_{\varphi}$ are injective, and
\item
the pull-back of a hyperplane section of
$B$ to $B_{\varphi,_1}$ is a divisor of type $(1,5)$,
\end{enumerate}
\item the image of $B_{\varphi,_2}$ by $\pi\colon \mP\to \sH_{1}^{B}$
is the conic $\Omega$, 

\item
if $l$ is a special line,
then $M(l)$ is the tangent line to $\Omega$ at $[l]$.
If $l$ is not a special line,
then $\varphi^{-1}(l)$ is the disjoint union of 
the fiber of $\pi$ corresponding to $l$,
and the smooth rational curve dominating a line on $\sH^B_1$.
In particular,
$M(l)$ is the disjoint union of a line and the point $[l]$.

{\em{By abuse of notation}}, we denote by $M(l)$ the one-dimensional part of
$M(l)$ for any line $l$. 
Vice-versa,
any line in $\mathcal{H}_{1}^{B}$ is of the form $M(l)$ for some line $l$,
and
\item 
the locus swept by lines intersecting $l$ is a hyperplane section $T_{l}$
  of $B$ whose singular locus is $l$. For every point $b$ of $T_l- l$,
  there exists exactly one line which belongs to $M(l)$ 
  and passes through $b$.

\end{enumerate}
\end{prop}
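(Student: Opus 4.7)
The plan is to combine the $G$-equivariant framework from the preceding sections with the classical analysis in \cite[\S 2]{FuNa}. The key preliminary inputs are Corollary \ref{veroneseproj}, identifying $\Omega$ as the locus of special lines; Proposition \ref{prop:polar}, identifying polarity with incidence; and the trisecant interpretation of Lemma \ref{castelnuovo}, which implies that $\varphi$ has degree $3$ because through a general point of $B$ pass exactly three lines.

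For statements (1) and (2), I would identify the ramification of $\varphi$ at $(p,[l])$ via deformation theory as the condition that a nontrivial first-order deformation of $l$ fixing $p$ produces a tangent vector to $B$ at $p$ already tangent to $l$; this happens exactly when $\sN_{l/B}=\sO_l(-1)\oplus\sO_l(1)$. By Corollary \ref{veroneseproj} the projection to $\sH^B_1$ of the ramification divisor is $\Omega$, which proves (2). Then $B_\varphi\in |{-K_B}|$ follows from Riemann--Hurwitz: compute $K_{\sU_1}$ via the $\mP^1$-bundle $\pi$, use $K_B=-2H$ together with $\deg\varphi=3$. The decomposition $\varphi^*B_\varphi=B_{\varphi,1}+2B_{\varphi,2}$ encodes the generic local picture over $B_\varphi$: two of the three lines through a point of $B_\varphi$ coalesce into a single special line. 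Hence $B_{\varphi,2}=\pi^{-1}(\Omega)$ is a $\mP^1$-bundle over $\Omega\simeq \mP^1$, isomorphic to $\mP^1\times \mP^1$; the residual component $B_{\varphi,1}$ parameterises the remaining third line through a point of a special line, and is a second $\mP^1$-bundle over $\Omega$ via $(p,[l])\mapsto\text{(special line through }p\text{)}$. The bidegree $(1,5)$ in (1-3) then follows by intersecting $\varphi^*H$ with the two rulings of $B_{\varphi,1}$, using that a hyperplane section of $B$ meets the anticanonical divisor $B_\varphi$ in $(-K_B)\cdot H^2=5$ points.

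For (3), Proposition \ref{prop:polar} says $M(l)$ is the polar of $[l]$ with respect to $\Omega$. If $[l]\notin\Omega$ this polar line does not contain $[l]$ (equivalently, $\widetilde\Omega([l],[l])\neq 0$), and $\varphi^{-1}(l)$ splits as the disjoint union of the $\pi$-fiber $\pi^{-1}([l])$ (mapping isomorphically to $l$) and a secondary irreducible component dominating $M(l)$. If $[l]\in\Omega$, the polar of a point on a smooth conic is its tangent line at that point, giving the first claim. The surjectivity of $l\mapsto M(l)$ onto lines of $\mP^2$ is the nondegeneracy of the polarity. For (4), the swept locus $T_l=\varphi(\pi^{-1}(M(l)))$ is an irreducible surface; the restriction of $\varphi$ to $\pi^{-1}(M(l))$ is birational off $l$, and a degree count against $H$ yields $H^2\cdot T_l=1$, so $T_l$ is a hyperplane section. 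The line $l$ sits in $T_l$ as the common axis of the pencil $M(l)$, so through each of its points passes the entire pencil; this forces $T_l$ to be singular along $l$ (a local computation exhibits an $A_1$-type singularity at a general point of $l$). Uniqueness of the incident line through $b\in T_l\setminus l$ follows from the birationality of the residual component of $\varphi^{-1}(l)$ onto $T_l\setminus l$.

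The main obstacle I expect is (1-3): fixing the $(1,5)$-bidegree and confirming that $B_{\varphi,1}\simeq \mP^1\times \mP^1$ rather than a nontrivial Hirzebruch surface. This is best handled via the explicit orbit decomposition $B=G([xy(x^4+y^4)])\cup G([6x^5y])\cup G([x^6])$, identifying $B_\varphi$ with $\overline{G([6x^5y])}$ and invoking the \cite{MU} description of the latter as a bidegree-$(5,1)$ image of $\mP^1\times \mP^1$; comparing its two natural rulings with those of $B_{\varphi,1}$ and $B_{\varphi,2}$ then simultaneously determines the isomorphism type and the hyperplane bidegree.
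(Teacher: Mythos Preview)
The paper does not give its own proof of this proposition: it simply writes ``See \cite[\S 2]{FuNa} and \cite[\S 1]{IlievB5}.'' So there is no argument in the paper to compare against; the proposition is quoted as a known structural fact about the universal family of lines on $B$.

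Your sketch is a reasonable reconstruction of why these statements hold, and it correctly organises the ingredients already assembled in the paper (Corollary~\ref{veroneseproj}, Proposition~\ref{prop:polar}, the orbit description from \cite{MU}). Two small points are worth flagging. First, your numerical justification for the $(1,5)$ bidegree contains a slip: $(-K_B)\cdot H^2 = 2H^3 = 10$, not $5$; the bidegree is really pinned down by the \cite{MU} identification of $\overline{G([6x^5y])}$ with a $(5,1)$ image of $\mP^1\times\mP^1$, which you do invoke at the end, so the slip is harmless but the intermediate heuristic should be dropped. Second, your identification $B_{\varphi,2}=\pi^{-1}(\Omega)$ and the description of $B_{\varphi,1}$ as the residual sheet are correct, but showing that $B_{\varphi,1}$ is the \emph{trivial} $\mP^1$-bundle over $\Omega$ (rather than some $\mathbb{F}_n$) genuinely requires the explicit input you point to; this is exactly what \cite{FuNa} and \cite{MU} supply, so citing them here is not a shortcut but the actual content.

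In short: your proposal goes well beyond what the paper does (a bare citation), and apart from the arithmetic slip it is a sound outline of the argument one finds in \cite{FuNa}.
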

\begin{proof} See \cite[\S 2]{FuNa} and \cite[\S 1]{IlievB5}. 
\end{proof}

\section{Special rational curves on the quintic del Pezzo $3$-fold}
\subsection{Rational sextics of conic type}
\label{subsection:the three cases}
In the Introduction we have recalled our theory of genus-$4$ spin curves via rational sextics on $B$; see \cite{TZ3}. In this paper we are interested in special loci inside ${\overline{S^{+}_{4}}}$. 
Hence we need to study special loci in the Hilbert scheme of rational sextics of $B$.
\subsubsection{Some known results on the Hilbert space of rational curves on B}
For a smooth projective variety $X$ in some projective space,
let $\sH^{0}_{d}(X)$ be the union of components
of the Hilbert scheme whose general points parameterize
smooth rational curves on $X$
of degree $d$.
%
Let $\sH^{0'}_{d}(X)$ be 
the open subset of $\sH^{0}_{d}(X)$
parameterizing smooth rational curves on $X$
of degree $d$ with linear hull of maximal dimension.

We define by induction $\sH^B_d$ to be the union 
of the components of the Hilbert scheme
whose general point parameterizes a smooth rational curve of
degree $d$ on $B$ obtained as a smoothing of the union of
a general smooth rational curve $R$ of degree $d-1$ contained in
$\sH^B_{d-1}$ and a general uni-secant line to $R$ contained in $B$.

We know that for some $d$, $\sH^{B}_{d}$ contains elements $[R]$ such 
that $\langle R\rangle$ is not of maximal dimension and we can show inductively that
$\sH_d^{B}\subset \overline{{\sH}^{0'}_{d}(B)}$, 
where we take the closure in the Hilbert scheme; see:\cite{TZ1}. It is known that $\sH^{B}_{d}$ is a rational variety if $d\leq 5$. 
We also know that:
\begin{prop}
\label{prop:Cdbis} 
${\sH}_d^{B}=\overline{\sH^{0'}_{d}(B)}$ for $d\leq 6$. $\sH^B_6$ is the closure of the Hilbert scheme of
sextic normal rational curves on $B$, and it is a rational variety of dimension $12$.
\end{prop}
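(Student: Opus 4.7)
The plan is to combine three ingredients: an inductive degeneration argument for the scheme-theoretic equality, a normal-bundle computation for the dimension, and the previous rationality result \cite[Theorem 5.2]{TZ3} for the last assertion.

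First I would establish the inclusion $\sH_d^B\subseteq \overline{\sH^{0'}_d(B)}$ by the inductive smoothing construction, as already alluded to in the paragraph preceding the statement and carried out in \cite{TZ1}. For a general reducible curve $R\cup \ell$ with $R\in \sH^B_{d-1}$ a smooth rational normal curve and $\ell$ a general unisecant line meeting $R$ at a single point $p$, the sequence
$$0\to \sO_R(1)(-p)\to \sO_{R\cup\ell}(1)\to \sO_\ell(1)\to 0$$
forces $h^1(R\cup\ell,\sO(1))=0$, so any smoothing $C$ inside $B$ is a smooth rational curve of degree $d$ with linear hull of maximal dimension, i.e.\ $[C]\in \sH^{0'}_d(B)$.

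For the reverse inclusion I would run an induction on $d$. The base cases $d=1,2$ are Propositions \ref{lines} and \ref{coniche}, where $\sH^B_1\cong \mP^2$ and $\sH^B_2\cong \mP^4$ are already known to be irreducible of the expected dimensions $2$ and $4$. For the inductive step, the key claim is that any smooth rational curve $C\subset B$ of degree $d\leq 6$ with maximal linear hull deforms in $B$ to a nodal curve of the inductive shape $R'\cup\ell$. The essential ingredient is the vanishing $h^1(C,\sN_{C/B})=0$, which makes $[C]$ a smooth point of the Hilbert scheme of dimension $-K_B\cdot C=2d$; this vanishing can be verified for such small $d$ via the restriction of the Euler sequence of $B\subset \mathbb{G}(2,V_4)$ to $C\cong \mP^1$ combined with the balanced splitting of $T_B|_{C}$ produced by the $G$-invariant geometry. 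An explicit parameter count then shows that the boundary locus of reducible curves $R'\cup\ell$ is a non-empty divisor inside the irreducible component containing $[C]$, forcing that component to coincide with $\sH^B_d$.

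The dimension assertion for $d=6$ is now immediate from $\dim \sH_d^B=-K_B\cdot C=2d=12$, which is also consistent with the inductive dimension jump of $+2$ at each step (one parameter for the intersection point on $R'$, one for the choice of unisecant line in $M(R')\subset \sH^B_1$). For the rationality of $\sH^B_6$ I would simply invoke \cite[Theorem 5.2]{TZ3}, where this component is shown to be birational to a projective bundle over a rational base; redoing that proof here would be a duplication of effort.

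I expect the main obstacle to be the inductive step showing that $\overline{\sH^{0'}_d(B)}$ is irreducible for each $d\leq 6$, i.e.\ that the degeneration $C\rightsquigarrow R'\cup \ell$ is available inside every component whose general point parameterizes a smooth rational normal curve. For $d=6$, where the linear hull fills $\mP^6$, standard general-position arguments on $B\subset \mathbb{G}(2,V_4)$ control the incidence of a general sextic with the two-dimensional family of lines $\sH^B_1$, but the same analysis for $d=3,4,5$ is the actual technical heart and is precisely the content of \cite{TZ1}.
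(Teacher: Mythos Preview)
The paper's own proof is a bare citation: ``See \cite[Corollary 3.10 and Theorem 5.1]{TZ3}.'' There is no argument given in the paper itself; the content is entirely deferred to the earlier joint work. Your proposal is therefore not so much a different proof as an attempt to reconstruct what those cited results actually say.

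As a reconstruction your outline is broadly reasonable: the smoothing argument for $\sH_d^B\subseteq \overline{\sH^{0'}_d(B)}$, the normal-bundle vanishing $h^1(C,\sN_{C/B})=0$ giving smoothness and dimension $-K_B\cdot C=2d$, and the appeal to \cite{TZ3} for rationality are all in the right spirit. You are also honest that the genuine technical content---irreducibility of $\overline{\sH^{0'}_d(B)}$ for each $d\leq 6$, i.e.\ that every component admits the degeneration $C\rightsquigarrow R'\cup\ell$---is what is actually proved in \cite{TZ1} and \cite{TZ3}, so in the end your proposal defers to the same sources the paper does, just with more scaffolding around the citation.

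Two minor points. First, the paper cites Theorem 5.1 of \cite{TZ3} here while you cite Theorem 5.2; elsewhere in the paper (subsection \ref{doveimoduli}) Theorem 5.2 is invoked for the rationality, so the discrepancy is harmless, but be aware the numbering matters. Second, your phrasing of the inductive step---showing a general $C$ degenerates to $R'\cup\ell$ inside \emph{every} component---is a slightly unusual way to package irreducibility; the arguments in \cite{TZ1} and \cite{TZ3} proceed more directly via parameter counts and explicit constructions on $B$, and your sketch of the vanishing $h^1(C,\sN_{C/B})=0$ via ``balanced splitting of $T_B|_C$ produced by the $G$-invariant geometry'' is too vague to stand on its own without those references.
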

\begin{proof} See\cite[Corollary 3.10 and Theorem 5.1]{TZ3}. \end{proof}

\subsubsection{Rational curves of degree $6$}
Let $R\subset B$ be a smooth rational curve of degree $6$. It is known by the invariant theory description of $B$ that there exists a smooth rational sextic inside $B$ which is contained inside no hyperplane sections. 
On the other hand an easy degree 
count shows that if $R$ is contained inside a hyperplane section $Z$ of $B$ then $Z$ is the unique one. In other words if $R\subset B$ then for its linear span it holds:
$\langle R \rangle =\mP^{6}$ or $\langle R \rangle =\mP^{5}$. Let $Z:=\langle R\rangle \cap B$. If $Z$ is smooth then $Z$ is a smooth quintic del Pezzo surface. Then there exists an isomorphism  
${\rm{Bl}}_{a_{1},a_{2},a_{3},a_{4} }\mP^{2}\rightarrow Z$ where 
${\rm{Bl}}_{a_{1},a_{2},a_{3},a_{4}}\mP^{2}\to \mP^2$ 
is the blow-up of $\mP^{2}$ at four
points $a_{1},a_{2},a_{3},a_{4}$. 
Denote by $\lambda$ the pull-back of a line of $\mP^{2}$ and let
$e_i$ where $1\leq i \leq 4$ be the corresponding four exceptional curves. In the following we will 
often omit to distinguish between $Z$ and 
${\rm{Bl}}_{a_{1},a_{2},a_{3},a_{4}}\mP^{2}$ if no danger of confusion can 
occur.

\begin{lem}\label{threecases}
    Let $R\subset B_{}$ be a smooth rational curve of degree $6$.
    Assume that $\langle R\rangle \cap B$ is a smooth subvariety. 
    Then, up to Cremona transformations, only the following cases do occur:
\begin{enumerate}[\rm(I)]
\item
$\langle R \rangle\simeq \mP^6$, that is $\langle R\rangle \cap
B=B_{}$;
\item $\langle R \rangle\simeq \mP^5$ and 
$R\sim 5\lambda-2(e_1+e_2+e_3)-3e_4$ on $Z$;
\item $\langle R \rangle\simeq \mP^5$ and $R\sim 2\lambda$ on $Z$.
\end{enumerate}
\end{lem}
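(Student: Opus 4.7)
The plan is first to pin down the possible dimensions of $\langle R\rangle$, then when $\langle R\rangle\ne\mP^{6}$ to classify the Picard class of $R$ on the smooth quintic del Pezzo surface $Z=\langle R\rangle\cap B$, and finally to collapse the resulting list modulo the Cremona action.

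First I would show $\dim\langle R\rangle\in\{5,6\}$. If $\dim\langle R\rangle\le 4$, then since $B$ is a smooth threefold of degree $5$ in $\mP^{6}$ containing no plane, the intersection $\langle R\rangle\cap B$ is pure one-dimensional of degree at most $5$; but this contradicts the fact that it contains $R$, which has degree $6$. The case $\dim\langle R\rangle=6$ gives $\langle R\rangle\cap B=B$, which is case (I). In the remaining case $\dim\langle R\rangle=5$, by hypothesis $Z=\langle R\rangle\cap B$ is a smooth hyperplane section, hence a quintic del Pezzo surface $\cong{\rm{Bl}}_{a_1,\dots,a_4}\mP^2$, with hyperplane class $H_Z=-K_Z=3\lambda-e_1-e_2-e_3-e_4$.

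Writing $R\sim a\lambda-\sum_i b_i e_i$, the degree condition $R\cdot(-K_Z)=6$ gives $3a-\sum_i b_i=6$, and adjunction ($R^{2}+R\cdot K_Z=-2$ for $R$ rational) gives $R^{2}=4$, i.e.\ $a^{2}-\sum_i b_i^{2}=4$. Effectivity of $R$ against the ten $(-1)$-curves $e_i$ and $\lambda-e_i-e_j$ of $Z$ forces $b_i\ge 0$ and $a\ge b_i+b_j$ for $i\ne j$. Cauchy--Schwarz then yields $2\le a\le 5$, and a straightforward enumeration of integer solutions produces exactly five classes, up to permutation of the indices: $(2;0,0,0,0)$, $(3;2,1,0,0)$, $(4;2,2,2,0)$, $(4;3,1,1,1)$, $(5;3,2,2,2)$.

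Finally I would invoke the Weyl group $W(Z)\simeq S_{5}$, realized on $Z$ by the quadratic Cremona transformations centered at three of the four blown-up points. Reflection across the root $\lambda-e_i-e_j-e_k$ shows that $2\lambda$ is Weyl-equivalent to $4\lambda-2(e_1+e_2+e_3)$, and that the classes $(3;2,1,0,0)$, $(4;3,1,1,1)$, $(5;3,2,2,2)$ are mutually Weyl-equivalent, so the five classes collapse into exactly two Cremona orbits whose canonical representatives are $R\sim 2\lambda$ (case (III)) and $R\sim 5\lambda-2(e_1+e_2+e_3)-3e_4$ (case (II), after a permutation of the $e_i$). The main obstacle is the enumeration together with the orbit computation: one must carefully sweep the range of $a$ from Cauchy--Schwarz, discard solutions forbidden by the $(-1)$-curve effectivity, and verify by explicit reflections that no further Weyl orbit arises.
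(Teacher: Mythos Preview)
Your proof is correct and follows essentially the same approach as the paper: reduce to the case $\langle R\rangle=\mP^5$, classify the numerical class of $R$ on the quintic del Pezzo surface $Z$ via the degree and self-intersection constraints, and then identify classes modulo Cremona. The paper's own proof is much terser (it simply says ``easy computation on cycles on $Z$'' and singles out only the extra class $4\lambda-e_1-e_2-e_3-3e_4$ as Cremona-equivalent to (II)), whereas you carry out the full enumeration and Weyl-orbit computation explicitly; one minor point is that your justification ``$B$ contains no plane'' for pure one-dimensionality of $\langle R\rangle\cap B$ when $\dim\langle R\rangle\le 4$ should really invoke $\mathrm{Pic}(B)=\mathbb{Z}\cdot H$ (any surface in $B$ is a multiple of $H$ and spans at least a $\mP^5$), but the conclusion is correct.
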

\begin{proof} We only need to recall that
    $|3\lambda-e_{1}-e_{2}-e_{3}-e_{4}|$
    is the linear system which embeds 
    the blow-up $Z$ of $\mP^2$ at  $a_{1},a_{2},a_{3},a_{4}$ into $\mP^{5}$. The
rest 
    is an easy computation on cycles on $Z$. We only stress that
    numerically it occurs also the case $\langle R \rangle\simeq\mP^5$ and 
$R\sim 4\lambda-e_1-e_2-e_3 -3e_4$ on $Z$; but this is the same of ${\rm{(II)}}$
up to a Cremona transformation.
\end{proof}

\begin{prop}\label{gradoduno6} 
The locus whose general point $[R]$ represents
     a general sextic rational curve of type ${\rm{(III)}}$ contained in a 
    general hyperplane section of $B$ is a divisor of $\sH^B_6$
  \end{prop}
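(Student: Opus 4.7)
The plan is to exhibit an explicit $11$-dimensional parameter space surjecting generically finitely onto the type (III) locus and then invoke $\dim \sH^B_6 = 12$ from Proposition \ref{prop:Cdbis}. Concretely, by Lemma \ref{planes in V} the pairs $([H],\sO_Z(\lambda))$, with $Z=H\cap B$ a smooth quintic del Pezzo surface and $\phi_{|\lambda|}\colon Z\to\mP^2$ the blow-down at four points $a_1,\ldots,a_4$, are parameterized by the open subscheme $\mathring{\mathbb G}\subset\mathbb G(3,V_4)$, which has dimension $\dim\mathbb G(3,5)=6$. For any such $(Z,\lambda)$ we have $h^0(Z,2\lambda)=h^0(\mP^2,\sO_{\mP^2}(2))=6$, so $|2\lambda|\simeq\mP^5$; a general member is the strict transform of a smooth plane conic avoiding the $a_i$, hence a smooth rational curve of degree $2\lambda\cdot(3\lambda-e_1-e_2-e_3-e_4)=6$ on $B$, namely a type (III) sextic. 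This yields an $\mP^5$-bundle $\sE\to\mathring{\mathbb G}$ of total dimension $11$ together with a natural rational map $\sE\dashrightarrow\sH^B_6$ whose image is the type (III) locus.

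I would then verify generic injectivity of $\sE\dashrightarrow\sH^B_6$: for a general type (III) curve $R$ one has $\langle R\rangle=\mP^5$, so the hyperplane $H$ with $Z=H\cap B=\langle R\rangle\cap B$ is recovered, and $\lambda=\tfrac{1}{2}[R]$ is uniquely determined in the torsion-free group $\textrm{Pic}(Z)$. Consequently the type (III) locus is irreducible of dimension $11$, and since $\sH^B_6$ is irreducible of dimension $12$ by Proposition \ref{prop:Cdbis}, it will be a divisor provided the image is contained in $\sH^B_6$ itself.

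The genuine obstacle is precisely this last containment, since $\sH^B_6=\overline{\sH^{0'}_6(B)}$ by Proposition \ref{prop:Cdbis} is the closure of the locus of smooth rational sextics with maximal linear hull $\mP^6$, while every type (III) curve spans only a $\mP^5$. To overcome this I would construct an explicit smoothing by moving $([H],\lambda)$ in a one-parameter family that degenerates the type (III) curve $R_0\in|2\lambda|$ to the union $R'_0\cup\ell$ of a smooth rational quintic $R'_0\in\sH^B_5$ with its general unisecant line $\ell\subset B$; smoothing this nodal curve produces, by the inductive definition of $\sH^B_d$, a general element of $\sH^B_6$ with $\langle R\rangle=\mP^6$ (type I). Thus $R_0$ lies in the closure of $\sH^{0'}_6(B)$, the type (III) locus sits inside $\sH^B_6$, and the dimension comparison $11=12-1$ finishes the argument.
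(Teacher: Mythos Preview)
Your first two paragraphs are essentially the paper's argument, only spelled out more carefully: the paper's proof is just the one-line dimension count ``$\dim\check{\mP}^6+\dim|2\lambda|=6+5=11=12-1$'' via Riemann--Roch on $Z$, invoking Proposition~\ref{prop:Cdbis} for $\dim\sH^B_6=12$. Your use of $\mathring{\mathbb G}$ from Lemma~\ref{planes in V} to package the choice of $\lambda$ is a nice refinement, and the generic injectivity argument (recovering $H=\langle R\rangle$ and $\lambda=\tfrac12[R]$ in the torsion-free $\mathrm{Pic}(Z)$) is correct.

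You are right to flag the containment issue; the paper simply does not address it in this proof, relying instead on \cite[Proposition~6.1.1]{TZ3} (see the remark in Subsection~\ref{doveimoduli}). However, your proposed degeneration argument has a genuine gap. Showing that $R_0$ degenerates to $R'_0\cup\ell$ and that $R'_0\cup\ell$ also smooths to a type~(I) curve only tells you that $[R'_0\cup\ell]$ lies in the closure of both loci; it does \emph{not} follow that $R_0$ lies in the closure of the type~(I) locus, unless you know the Hilbert scheme is irreducible (or at least smooth) at $[R'_0\cup\ell]$. Two components could well meet there.

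A cleaner fix bypasses the nodal curve entirely: compute $N_{R_0/B}$ directly from
\[
0\to N_{R_0/Z}\to N_{R_0/B}\to N_{Z/B}|_{R_0}\to 0,
\]
where $N_{R_0/Z}\simeq\sO_{\mP^1}((2\lambda)^2)=\sO_{\mP^1}(4)$ and $N_{Z/B}|_{R_0}\simeq\sO_{\mP^1}(6)$. Hence $h^0(N_{R_0/B})=12$ and $h^1=0$, so the Hilbert scheme is smooth of dimension $12$ at $[R_0]$ and there is a unique component $\mathcal C$ through it. Since $h^0(\mP^6,\sI_{R_0}(1))=1$, semicontinuity gives $h^0(\sI_R(1))\le 1$ for general $[R]\in\mathcal C$; if equality held generically, the rational map $\mathcal C\dashrightarrow\check{\mP}^6$, $[R]\mapsto[\langle R\rangle]$, would have generic fibre of dimension $\ge 6$, contradicting the fact (Lemma~\ref{threecases} plus Riemann--Roch) that smooth rational sextics on a fixed smooth $Z$ move in at most $5$-dimensional families. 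Thus the general point of $\mathcal C$ spans $\mP^6$, so $\mathcal C=\sH^B_6$ by Proposition~\ref{prop:Cdbis}, and you are done.
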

  \begin{proof} By Proposition \ref{prop:Cdbis}  $\sH^B_6$ is irreducible of dimension $12$. By Lemma \ref{threecases} the claim follows by an easy application of Riemann-Roch theorem on smooth quintic del Pezzo surfaces.
  \end{proof}

\begin{nota}\label{notaH}
From now on we will denote by $\sH_{\rm{{c.t.}}}$ the divisor in $\sH^{B}_{6}$ which is the closure of the loci whose general element $[R]$ is a rational sextic as in ${\rm{(III)}}$ of Lemma \ref{threecases}. Its irreducibility will be proven later.
\end{nota}

\begin{defn}
We call a smooth sextic $R\subset B$ such that $[R]\in\sH_{\rm{{c.t.}}}$ a {\it{sextic of conic type}} and $\sH_{\rm{{c.t.}}}$ is accordingly called {\it{the Hilbert scheme of rational sextics of conic type}}.
\end{defn}

For later use we prove:
\begin{prop}
\label{prop:Cd1III}
Let $[R]$ be a general smooth sextic of conic type.
Then it satisfies the following conditions:
\begin{enumerate}[$(1)$]
\item
there exist no $k$-secant lines of $R$ on $B$ with $k\geq 3$,
\item there exist exactly six bi-secant lines of $R$ on
$B$, and any of them intersects $R$ simply, and
\item $R$ intersects transversally $B_{\varphi}$.

\end{enumerate}
\end{prop}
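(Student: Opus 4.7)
The plan is to reduce everything to intersection theory on the unique smooth quintic del Pezzo surface $Z=\langle R\rangle\cap B$ in which a general $R\in\sH_{\rm c.t.}$ lies: by Lemma~\ref{threecases}(III), $Z\simeq \mathrm{Bl}_{a_1,\dots,a_4}\mP^2$ and $R\sim 2\lambda$ on $Z$.

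For parts~(1) and~(2), I would first observe that any line $\ell\subset B$ with $|\ell\cap R|\geq 2$ must lie in the hyperplane $H=\langle R\rangle$, since otherwise $\ell\cap H$ is a single reduced point and $|\ell\cap R|\leq 1$; hence every such $\ell$ is contained in $Z$. The ten lines of the smooth del Pezzo quintic $Z$ are then the four exceptional curves $e_i$ and the six strict transforms $h_{ij}:=\lambda-e_i-e_j$. A direct computation yields $R\cdot e_i=2\lambda\cdot e_i=0$ and $R\cdot h_{ij}=2\lambda\cdot(\lambda-e_i-e_j)=2$, so no $e_i$ meets $R$, each $h_{ij}$ meets $R$ with intersection number exactly $2$, and no $k$-secant with $k\geq 3$ can exist. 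For generic $R\in|2\lambda|$ each $h_{ij}$ meets $R$ transversely rather than tangentially, since tangency to one of the six lines through pairs of $a_i$'s cuts out a proper closed condition on the $5$-dimensional linear system $|2\lambda|$; this yields precisely six simple bisecants and establishes (1) and (2).

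For part~(3), adjunction on $B$ gives $K_Z=(K_B+Z)|_Z=-H|_Z$, so $B_\varphi|_Z\in|2H|_Z|=|-2K_Z|$ is a bianticanonical curve on $Z$. The total intersection number is $R\cdot B_\varphi|_Z=2\lambda\cdot(6\lambda-2\sum_i e_i)=12$, and transversality of the resulting $12$ intersection points follows from Bertini applied to $|2\lambda|$, which is base-point-free off $\bigcup_i e_i$.

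The main, rather mild, obstacle is to verify that these generic conditions (the $h_{ij}$ not being tangent to $R$, and $R$ meeting $B_\varphi|_Z$ transversely) are genuinely open and nonempty on a general member $[R]$ of the $12$-dimensional divisor $\sH_{\rm c.t.}$. This reduces to a standard dimension count in the incidence variety parametrising pairs $(Z,R)$, using both that $Z$ varies among smooth hyperplane sections of $B$ and that $R$ varies in $|2\lambda|$ on each $Z$; one should also check that the four lines $e_i$ are not themselves special, so that $B_\varphi|_Z$ does not contain any $e_i$ and the Bertini argument is not obstructed along the base locus of $|2\lambda|$. Since $\Omega\subset\sH^B_1=\mP^2$ is a fixed conic and the four points $[e_i]$ vary with $Z$, this last condition holds for generic $Z$.
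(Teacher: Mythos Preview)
Your proposal is correct and follows essentially the same route as the paper. The paper's own proof is a two-line sketch: it records that $R$ sits in a smooth hyperplane section $Z$ with $R\in|2\lambda|$ and $B_{\varphi}|_Z\in|6\lambda-2\sum_{i}e_i|$, and then declares the transversality and bisecant claims ``easy to see''. You have simply unpacked that sketch---the reduction of multisecants to lines on $Z$, the explicit intersection numbers $R\cdot e_i=0$ and $R\cdot h_{ij}=2$, and the Bertini/open-condition justification for generic transversality---so there is no substantive difference in strategy. One minor remark: $|2\lambda|$ is in fact base-point-free everywhere on $Z$ (it is the pullback of $|\sO_{\mP^2}(2)|$), not merely off $\bigcup_i e_i$, so the Bertini step is even cleaner than you state.
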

\begin{proof} By lemma \ref{threecases} $R$ is contained in a smooth hyperplane section $Z$ of $B$ and $B_{\varphi|Z}\in |6\lambda-2\sum_{i=1}^{4}e_i|$, then it is easy to see that the general element $R\in |2\lambda|$ is transversal to $B_{\varphi|Z}$ and that its $6$ bisecants are ordinary ones and they  intersect transversally $B_{\varphi}$.
\end{proof}

\subsection{An open subscheme of the Hilbert scheme of sextics of conic type}
To control the $G$-action on $\sH_{\rm{{c.t.}}}$ we need to have an explicit description of this action.
Actually for our rationality theorem it will be sufficient to give an interpretation of a $G$-invariant open subset of $\sH_{\rm{{c.t.}}}$ in terms of flag varieties naturally associated to the presentation of $B$ as a subvariety of $G(2,5)$.

 By Lemma \ref{threecases} $(3)$ we see that $[R]\in\sH_{\rm{{c.t.}}}$ comes with a complete linear system $|\lambda|$ on $Z$ such that $R\in |2\lambda|$ and $\phi_{|\lambda|}\colon Z\to\mathbb P^2$ is the contraction of $4$ rational curves $e_1,e_2,e_3,e_4$. We can construct a $G$-equivariant compactification of parameter spaces of triples $([H], |\lambda|, R)$ where $[H]\in\check\mP^6$, $Z=H\cap B$ is smooth, $R\in |2\lambda|$, and $\sO_Z(\lambda)$ is the pull-back of the line bundle $\sO_{\mP^2}(1)$.

Consider $\mathring \sH_{\rm{{c.t.}}}\hookrightarrow\sH_{\rm{{c.t.}}}$ the open subscheme of $\sH_{\rm{{c.t.}}}$ given by those $[R]\in\sH_{\rm{{c.t.}}}$ such that $R$ 
is smooth and the hyperplane section $\langle R\rangle\cap B=Z$ is smooth. It remains defined a natural morphism $\mathring \epsilon\colon \mathring \sH_{\rm{{c.t.}}}\to\check\mP^{6}$ given by $[R]\stackrel{\mathring \epsilon}{\rightarrow} [\langle R\rangle]$. Let $\mathring \sigma\circ\mathring\rho=\mathring \epsilon$ be the Stein factorization of $\mathring\epsilon$ where $\mathring\rho\colon \mathring \sH_{\rm{{c.t.}}}\to\mathring \sT$ has irreducible fibers and $\mathring \sigma\colon \mathring \sT\to \check\mP^{6}$ is a finite covering over the open set given by the hyperplane sections which are transversal to $B$. By construction $\mathring \rho\colon \mathring \sH_{\rm{{c.t.}}}\to\mathring \sT$ is a quasi-$\mP^5$-bundle over $\mathring \sT$. Up to now we do not know if $\mathring \sT$ is irreducible. Hence we do not yet know that $\mathring \sH_{\rm{{c.t.}}}$ is irreducible. The goal is to have an explicit construction of a suitable $G$-equivariant compactification $\sT$ of $\mathring \sT$. By definition we only have that the inclusion $\mathring \sH_{\rm{{c.t.}}}\hookrightarrow\sH_{\rm{{c.t.}}}$ is open.

\subsubsection{The flag variety $F(2,3,5)$ and the parameter space of polarized hyperplane sections}
Since no sub-scheme of $B$ with Hilbert polynomial $3t+1$ is a planar one, then we have a natural injection 
$\sH^{B}_{3}\rightarrow\mathbb G(\mP^{2},{\check{\mP^{6}}})$ given 
by the map $[\Gamma]\mapsto[{\rm{Ann}}\langle \Gamma\rangle]$ where 
${\rm{Ann}}\langle \Gamma\rangle\subset{\check{\mP^{6}}}$ is the web 
of hyperplanes containing the $3$-dimensional projective subspace of $\mP^6$ spanned 
by $\Gamma$. Let $\tau\colon \sR\to\sH^{B}_{3}$ be the pull-back over 
$\sH^{B}_{3}$ of the universal family $\sU^{3}_{7}$ of $G(\mP^{2},{\check{\mP^{6}}})$. By Lemma \ref{lines in V} we know that $\sH^B_3$ is isomorphic to $G(2,5)$. We are going to decide the $\mP^2$-bundle $\sR\to G(2,V)=\sH^B_3$.  Note that $\sR$ is smooth and irreducible.
\begin{prop}\label{coordinatefree}
$\sR$ is the flag variety $F(2,3,V)$, namely, it parameterizes the configuration
$line \subset plane \subset \mP(V)$. In particular $\sR=\mP(\sG^*)$, where $\sG$ is the dual of the universal subbundle of rank three on $G(3,V)$.
\end{prop}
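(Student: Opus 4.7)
\medskip

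\textbf{Proof plan.} The goal is to construct a $G$-equivariant isomorphism $\Psi\colon F(2,3,V) \to \sR$ of $\mP^2$-bundles over $\sH^{B}_{3} \simeq G(2,V)$, and then identify $F(2,3,V)$ with $\mP(\sG^*)$ via the second natural projection $F(2,3,V) \to G(3,V)$. The key inputs are the geometric descriptions of rational cubics (Lemma \ref{lines in V}) and of polarised hyperplane sections (Lemma \ref{planes in V}) already established for $B$.

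First, I would define $\Psi$ on an open dense subset. Given a flag $l \subset K \subset V$ in $F(2,3,V)$ with $[K] \in \mathring{\mathbb G}$, Lemma \ref{lines in V} assigns the rational cubic $\Gamma_l := \pi_{B\star}\pi_{\sF}^{\star}(l)$ and Lemma \ref{planes in V} assigns the unique hyperplane $H_K \subset \mP^6$ with $B \cap H_K = \pi_{B\star}\pi_{\sF}^{\star}(K) =: B_K$. Since $l \subset K$ forces $\pi_{\sF}^{\star}(l) \subset \pi_{\sF}^{\star}(K)$, we have $\Gamma_l \subset B_K \subset H_K$, so that $([\Gamma_l],[H_K])$ lies in $\sR$, regarded as the incidence scheme inside $\sH^{B}_{3} \times \check{\mP^{6}}$. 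By $G$-equivariance of the construction and the smoothness of both $F(2,3,V)$ and $\sR$, the morphism $\Psi$ extends uniquely to all of $F(2,3,V)$.

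Second, I would verify that $\Psi$ is an isomorphism. Both $F(2,3,V)$ and $\sR$ are $\mP^2$-bundles over $G(2,V) = \sH^{B}_{3}$: the fiber of the first over $[l]$ is $\{[K] \in G(3,V) : l \subset K\} \simeq \mP(V/l) \simeq \mP^2$, and the fiber of the second over $[\Gamma_l]$ is the web $[\mathrm{Ann}\langle \Gamma_l \rangle] \simeq \mP^2$, since a twisted cubic spans a $\mP^3$. On a general fiber $\Psi$ reads $K \mapsto H_K$, which is injective by the uniqueness statement in Lemma \ref{planes in V}. An injective morphism $\mP^2 \to \mP^2$ between smooth projective varieties of the same dimension is necessarily an isomorphism; hence $\Psi$ is fiberwise an isomorphism of $\mP^2$-bundles, and therefore a global isomorphism of schemes.

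Finally, the identification $F(2,3,V) \simeq \mP(\sG^*)$ follows from the second projection $F(2,3,V) \to G(3,V)$: its fiber over $[K]$ is $G(2,K) \simeq \mP^2$, the Grassmannian of two-dimensional subspaces of $K$, which in Grothendieck's convention globalises to $\mP(\sG^*)$, where $\sG$ is the dual of the universal rank-three subbundle on $G(3,V)$. The main obstacle I would anticipate is the well-posedness of $\Psi$ past the degenerate locus (where $\Gamma_l$ is reducible, or $K \cap P$ fails to be four distinct reduced points); this is handled by invoking $G$-equivariance to reduce all verifications to the open $G$-orbit inside $\mathring{\mathbb G}$, where Lemmas \ref{lines in V} and \ref{planes in V} apply directly.
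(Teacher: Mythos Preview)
Your approach is essentially the same as the paper's: build a morphism $F(2,3,V)\to\sR$ over $G(2,V)\simeq\sH^B_3$ via $(l\subset K)\mapsto([\Gamma_l],[H_K])$ using Lemmas~\ref{lines in V} and~\ref{planes in V}, and then conclude it is an isomorphism. The paper is terser: it simply asserts the morphism is injective and finishes with ``$\sR$ and $F(2,3,V)$ are smooth, irreducible, of the same dimension, hence the injective morphism is an isomorphism.'' Your fiberwise argument (injective $\mP^2\to\mP^2$ is an isomorphism) reaches the same conclusion by a slightly different route.

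There is, however, a genuine weak point in your extension step. You write that ``by $G$-equivariance of the construction and the smoothness of both $F(2,3,V)$ and $\sR$, the morphism $\Psi$ extends uniquely to all of $F(2,3,V)$.'' Neither ingredient does this job: a rational map between smooth projective varieties need not extend in codimension~$\geq 2$ (think of a Cremona transformation), and $G$-equivariance cannot propagate the definition because $G=\mathrm{PGL}_2$ is $3$-dimensional while $F(2,3,V)$ is $8$-dimensional, so $G$-orbits do not cover the boundary. The paper sidesteps this by treating the morphism as globally defined from the outset (the first component $l\mapsto\Gamma_l$ is the global isomorphism of Lemma~\ref{lines in V}, and the paper implicitly takes the second component as a global assignment as well), and then invokes the injectivity--smoothness--dimension argument. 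If you want to keep your open-set formulation, the honest fix is to note that $\Psi$ is, over the base $G(2,V)$, a rational map of $\mP^2$-bundles which is linear on each fibre where defined; such a map is given by a bundle morphism and therefore extends across the locus where Lemma~\ref{planes in V} is not directly available. Alternatively, adopt the paper's conclusion verbatim: once you have an injective morphism from the smooth irreducible $F(2,3,V)$ to the smooth irreducible $\sR$ of the same dimension, Zariski's Main Theorem finishes the proof without any fibrewise check.
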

\begin{proof} We recall that $G(2,5)=\sH^B_3$ since Lemma \ref{lines in V}. For any $[l]\in G(2,5)$ we denote by $[{\rm{Ann}}\langle l\rangle]\subset{\check{\mP^{6}}}$ the plane parameterising the hyperplanes of $\mP(V)$ containing $l$. It is easy to see that  the isomorphism $G(2,5)\to \sH^B_3$ of Lemma \ref{lines in V}  given by $[l] \mapsto [\Gamma]:=[\pi_{B\star}\pi_{\sF}^{\star}(l)]$ gives an injective morphism $F(2,3,V)\to \sR$.
 Since $\sR$ and $F(2,3,V)$ are smooth irreducible and of the same dimension it follows that $F(2,3,V)\to \sR$ is an isomorphism.
\end{proof}

\subsubsection{Explicit presentation of a compactification of $\mathring \sH_{\rm{{c.t.}}}$}
Finally we consider the space $\mP(S^2 \sG^*)$ which is the Hilbert scheme of conics in $\mP(V)$. The natural morphism 
$\rho_1\colon \mP(S^2 \sG^*)\to G(3,V)$ clearly expresses $\mP(S^2 \sG^*)$ as a $\mP^5$-bundle over $G(3,V)$. We consider $ {\mathring{\rho_1}}\colon {\mathring{ \mP}} ( S^2 \sG^{*} )\to {\mathring{\mathbb G}}$ its restriction over ${\mathring {\mathbb G}}$, see Equation (\ref{openopen}), and where ${\mathring{ \mP}} ( S^2 \sG^{*} )$ parameterises only the smooth conics of $\mP(V)$.

\begin{prop}\label{cleaning} There is a natural $G$-equivariant isomorphism between $\mathring \sH_{\rm{{c.t.}}}$ and ${\mathring{ \mP}} ( S^2 \sG^{*} )$. Moreover $\mathring{\mathbb G}$ is isomorphic to  $\mathring \sT$.
\end{prop}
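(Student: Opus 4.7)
The plan is to construct a natural $G$-equivariant morphism $\Psi\colon \mathring{\sH}_{\rm{c.t.}} \to \mathring{\mP}(S^2 \sG^*)$, exhibit an explicit inverse, and then deduce the second statement from the Stein factorization of $\mathring\epsilon$. Given $[R]\in \mathring{\sH}_{\rm{c.t.}}$, the hyperplane $H:=\langle R\rangle$ cuts $B$ along a smooth quintic del Pezzo surface $Z$, and by Lemma \ref{threecases}(III) there exists a class $\lambda$ on $Z$ with $R\sim 2\lambda$ such that $\phi_{|\lambda|}\colon Z\to \mP^2$ is the contraction of four disjoint lines $e_1,\ldots,e_4$. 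Since $\mathrm{Pic}\,Z$ is torsion-free, $\lambda$ is uniquely recovered from $[R]$, and Lemma \ref{planes in V} then furnishes a canonical $[K]\in \mathring{\mathbb G}$ encoding $(Z,\lambda)$. On the open locus where $R$ avoids the exceptional lines $e_i$, the image $q:=\phi_{|\lambda|}(R)$ is a smooth plane conic inside $\mP^2\cong K$; I set $\Psi([R]):=([K],[q])$.

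For the inverse, a pair $([K],[q])$ with $q$ a smooth conic in general position in $K=\mP^2$ determines a smooth hyperplane section $Z=B_K$ together with its polarization $\lambda$ by Lemma \ref{planes in V}, and the strict transform $\phi_{|\lambda|}^{-1}(q)\in |2\lambda|$ is the desired smooth rational sextic of type (III). Both assignments are morphisms by the usual relative constructions (flat family of sextics, relative Picard scheme of the family of hyperplane sections, relative linear system), and they are mutually inverse by construction. $G$-equivariance is built in: the standard $G$-action on $B$ lifts compatibly to $\mathbb G(3,V)$, to the universal bundle $\sG^*$, and through the identifications of Lemmas \ref{lines in V} and \ref{planes in V}.

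For the second assertion, the composition
\[
\mathring{\sH}_{\rm{c.t.}}\ \xrightarrow{\Psi}\ \mathring{\mP}(S^2\sG^*)\ \xrightarrow{\mathring\rho_1}\ \mathring{\mathbb G}\ \xrightarrow{\alpha}\ \check\mP^6
\]
reproduces $\mathring\epsilon$. Since $\mathring\rho_1$ is a Zariski-locally trivial quasi-$\mP^5$-bundle (hence has irreducible fibers) and $\alpha$ is quasi-finite (its fibers are the finitely many contractions of a smooth quintic del Pezzo surface onto $\mP^2$), this composition already is the Stein factorization of $\mathring\epsilon$, so $\mathring{\sT}\cong \mathring{\mathbb G}$ via $\alpha$. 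The delicate step along the way is the canonical, $G$-equivariant identification of the abstract target $\mP^2$ of $\phi_{|\lambda|}$ with the concrete plane $K\subset \mP(V)$; this rests on the construction in Lemma \ref{planes in V}, where $K$ is realized as the base locus of the pencil of hyperplanes dual to the pencil of conics on $B$ determined by the contracted lines $e_1,\ldots,e_4$. Once this identification is in place, the scheme-theoretic bijectivity of $\Psi$ reduces to a routine verification in families.
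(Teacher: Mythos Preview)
Your argument is correct and matches the paper's approach in substance. The paper builds the isomorphism in the opposite direction, defining
\[
\mathring\delta\colon \mathring{\mP}(S^2\sG^*)\to \mathring{\sH}_{\rm c.t.},\qquad [q]\mapsto [\pi_{B\star}\pi_{\sF}^{\star}(q)],
\]
using the incidence diagram $B\xleftarrow{\pi_B}\mP(\sF)\xrightarrow{\pi_{\sF}}\mP(V)$ of Lemma~\ref{castelnuovo}, and then observes that it admits an inverse over $\mathring{\sH}_{\rm c.t.}$; the Stein-factorisation comparison with $\alpha\circ\mathring\rho_1$ is identical to yours. Your construction is the same geometry read fibrewise: for $[K]\in\mathring{\mathbb G}$ the strict transform $S_K\subset\mP(\sF)$ is isomorphic to $Z=B_K$ via $\pi_B$, and $\pi_{\sF}|_{S_K}\colon S_K\to K$ is precisely the blow-down $\phi_{|\lambda|}$, so your $\Psi$ and the paper's $\mathring\delta$ are mutual inverses. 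The advantage of the paper's formulation is that the ``delicate'' identification of the abstract target $\mP^2$ of $\phi_{|\lambda|}$ with the concrete plane $K\subset\mP(V)$ is absorbed into the global correspondence $\mP(\sF)$, so no separate argument is needed; your version makes the fibrewise picture more explicit at the cost of having to invoke Lemma~\ref{planes in V} for that identification. One small remark: since $R\in|2\lambda|$ is irreducible with $R\cdot e_i=0$, the curve $R$ automatically avoids the exceptional lines, so your parenthetical restriction is unnecessary.
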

\begin{proof} We use notations of Lemma \ref{castelnuovo} and of Lemma \ref{lines in V}.

Consider the diagram \ref{tris&line}. By definition of $\sH_{\rm{{c.t.}}}$ the morphism ${\mathring{\delta}}\colon{\mathring{ \mP}} ( S^2 \sG^{*} )\to \mathring \sH\hookrightarrow \sH_{\rm{{c.t.}}}$ given by $[q] \mapsto [R]:=[\pi_{B\star}\pi_{\sF}^{\star}(q)]$ admits an inverse over $\mathring \sH_{\rm{{c.t.}}}$. This implies that $\mathring \sH_{\rm{{c.t.}}}$ is irreducible. Hence $\mathring \sT$ is irreducible.  We recall the morphism $\alpha\colon\mathring{\mathbb G} \to \check\mP^6$ given in Equation (\ref{alpha}) and we compose it by ${\mathring{\rho_1}}\colon {\mathring{ \mP}} ( S^2 \sG^{*} )\to {\mathring{\mathbb G}}$. By our construction the Stein factorisation $\mathring \sigma\circ\mathring\rho$ of $\mathring \epsilon\colon \mathring \sH_{\rm{{c.t.}}}\to\check\mP^{6}$ and the Stein factorisation of 
$\mathring\alpha\circ{\mathring{\rho_1}}\colon {\mathring{ \mP}} ( S^2 \sG^{*} )\to\check\mP^{6}$ of the morphism ${\mathring{ \mP}} ( S^2 \sG^{*} )\to  \check\mP^6$ are $G$-equivariant compatible with ${\mathring{\delta}}$ and with respective $G$-universal properties. Hence $\mathring{\mathbb G}$ is $G$-isomorphic to  $\mathring \sT$.
\end{proof}

\begin{rem} \label{globalmorphisms} By Lemma \ref{coordinatefree} and by Proposition \ref{cleaning} we have that the Stein factorisation of the morphism $\sR\to \check\mP^{6}$ is given by $\mu\colon\sR\to\sT$ followed by $\sigma\colon \sT\to \check\mP^{6}$ where by \ref{coordinatefree} it holds that $\sT=G(3,V)$. 
\end{rem}

\subsection{Group action on rational sextics of conic type }
 We have seen above that ${\mathring {\sH}}\simeq_G{\mathring{\mP}}(S^2 \sG^*)$, $\sR=F(2,3,5)$ and $\sT=\mathbb G (3,V)$. Moreover 
 $\sH^{B}_{3}\simeq_G \mathbb G (2,V)$ by Lemma \ref{lines in V}.
 
 Then to understand the birational nature of the $G$-action on the following diagram:
     \begin{equation}\label{sumupvan1}
\xymatrix{& {{\mathring{\sH}}_{\rm{{c.t.}}}}\ar[d]_{{\mathring{\rho}}}\\
{\mathring{\sR}}\ar[d]_{\tau}\ar[r]^{\mu}&  {\mathring{\sT}}
\ar[r]^{\sigma} & \check\mP^{6}\\
\sH^{B}_{3} } 
\end{equation}
\noindent we can study the natural $G$-action on open orbits of the following one:

 \begin{equation}\label{sumupvan}
\xymatrix{& \mP(S^2 \sG^*)\ar[d]_{\rho}\\
F(2,3,V)\ar[d]_{\tau}\ar[r]^{\mu}&  G(3,V)
\ar[r]^{\sigma} & \check\mP^{6}\\
G(2,V)} 
\end{equation}
\noindent since these two diagrams are $G$-birational.

\begin{thm}\label{firstrational}
 ${\mathring{\sH}}_{\rm{{c.t.}}}//G$ is rational. 
\end{thm}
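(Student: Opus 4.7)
The plan is to exploit the $G$-equivariant quasi-projective bundle structure
$\mathring{\sH}_{\rm{c.t.}} \simeq_G \mathring{\mathbb{P}}(S^2\sG^{*})\to \mathring{\mathbb G}$
established in Proposition \ref{cleaning}, together with the description of $\mathring{\mathbb G}//G$ given in Lemma \ref{quotient grassmannian}. Since $\mathring{\mathbb P}(S^2\sG^{*})\to \mathring{\mathbb G}$ is the restriction of the $G$-equivariant projective bundle $\mathbb P(S^2\sG^{*})\to G(3,V)$ and $\rho_1$ is Zariski-locally trivial with fiber $\mathbb P^5$, the total space is birational, as a $G$-variety, to an equivariant vector bundle of rank $6$ over $\mathring{\mathbb G}$ (after choosing an equivariant quotient splitting $S^2\sG^{*}\twoheadrightarrow \sL$ on a $G$-invariant open set and projectivising).

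The key tool is the \emph{no-name lemma} (see, e.g., Bogomolov--Katsylo): if a linear algebraic group $G$ acts on an irreducible variety $Y$ with generically trivial stabilisers, and $\sE\to Y$ is a $G$-linearised vector bundle, then $\mathbb P(\sE)//G$ is birational to $(Y//G)\times \mathbb P^{r-1}$ where $r=\mathrm{rk}\,\sE$. I verify its two hypotheses as follows. First, by Lemma \ref{quotient grassmannian}(2), the $G$-stabiliser of a general $[K]\in\mathring{\mathbb G}$ is trivial, so the action has generically trivial stabilisers on $\mathring{\mathbb G}$. Second, the bundle $S^2\sG^{*}$ is naturally $G$-linearised since $\sG$ is the restriction to $G(3,V)$ of the tautological sheaf which is an $\mathrm{SL}(V)$-equivariant object, and the $G$-action is induced from the $\mathrm{SL}(V)$-action through the identification $V=V_4$. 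Hence the no-name lemma applies to the bundle $\mathbb P(S^2\sG^{*})\to G(3,V)$ restricted to $\mathring{\mathbb G}$.

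Combining the two pieces, I obtain
\[
{\mathring{\sH}}_{\rm{c.t.}}//G\;\stackrel{\text{bir}}{\sim}\;\mathring{\mathbb P}(S^2\sG^{*})//G\;\stackrel{\text{bir}}{\sim}\;(\mathring{\mathbb G}//G)\times \mathbb P^{5}\;\stackrel{\text{bir}}{\sim}\;\mathbb P^{3}\times \mathbb P^{5},
\]
where the last birationality uses Lemma \ref{quotient grassmannian}(3). Since $\mathbb P^3\times\mathbb P^5$ is rational, the claim follows.

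The main subtlety I expect is the verification that the no-name lemma (as opposed to a mere dimension count) really applies in this equivariant setting, i.e., that the $G$-linearisation of $S^2\sG^{*}$ descends fibrewise in a way compatible with the trivial generic stabilisers, so that one can straighten out the bundle birationally into a product. Because $\sG$ is $\mathrm{SL}(V)$-equivariant and $G\hookrightarrow \mathrm{PGL}(V)$ lifts to $\mathrm{SL}(V)$ on the relevant open set after a finite cover, this is standard; no essential new input beyond the already established triviality of the generic stabiliser on $\mathring{\mathbb G}$ and the rationality $\mathring{\mathbb G}//G\sim\mathbb P^{3}$ is required.
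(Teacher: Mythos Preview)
Your proof is correct and follows essentially the same strategy as the paper: exploit the $G$-equivariant identification $\mathring{\sH}_{\rm{c.t.}}\simeq_G \mathring{\mathbb P}(S^2\sG^*)$ from Proposition~\ref{cleaning}, use the generic triviality of stabilisers and the rationality of $\mathring{\mathbb G}//G$ from Lemma~\ref{quotient grassmannian}, and then apply a descent-type argument to the linearised bundle. The only cosmetic difference is the tool invoked for that last step---the paper cites the \emph{Lemme de descente} of Dr\'ezet--Narasimhan \cite[Th\'eor\`eme~2.3]{DN} rather than the Bogomolov--Katsylo no-name lemma, but in the present context (generically free action, $G$-linearised vector bundle) these yield the same conclusion.
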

\noindent{\it{First proof.}} The claims follows easily by Lemma \ref{quotient grassmannian} and by the Lemme de descente: c.f. \cite[Th\'eor\`eme 2.3]{DN}.

\section{Genus 4 spin curve with a vanishing theta-null}
Let $\overline{{\rm{S}}^+_4}$ be the moduli space of genus $4$ spin curves. By \cite{cornalba} we know that it is a projective variety. The forgetful morphism $\xi\colon \overline{{\rm{S}}^+_4}\to\overline{\sM_4}$ exibits it as a $136$-to-$1$ cover of the Deligne-Mumford compactification of the moduli space of smooth genus $4$ curves. Inside $\overline{\sM_4}$ 
there is the divisor $\overline{\sM^{{\rm{null}}}_4}$ which is the closure of the loci $\sM^{{ \rm{null},0  }}_4\subset \sM^{{\rm{null}}}_4$ which parameterizes the genus-$4$ smooth curves $C$ whose canonical model is a transversal intersection inside $\mP^3$ of a quadric cone and a smooth cubic surface and such that ${\rm{Aut}}(C)={\rm{id_{C}}}$. We set  ${\rm{S}}^{{\rm{null}}}_{4}$  to be the  $\xi$-pull-back of $\sM^{{\rm{null}}}_4$, hence ${{{\rm{S}}^{{\rm{null}}}_{g}}}=
{{{\rm{S}}^{{\rm{null}},0 }_{4}}}\sqcup \Theta_{g,\rm{null}}$; see Equation (\ref{dsunion}) of the Introduction. We want to study the rationality problem of ${{{\rm{S}}^{{\rm{null}},0 }_{4}}}$.
 
\begin{prop}\label{reducedness} The divisor ${{{\rm{S}}^{{\rm{null}},0}_{4}}}$ is reduced.
\end{prop}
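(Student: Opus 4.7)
The plan is to transfer reducedness from the underlying moduli divisor $\sM^{{\rm{null}}}_4\subset\sM_4$ to ${{{\rm{S}}^{{\rm{null}},0 }_{4}}}$ via the étale part of the forgetful morphism $\xi$.

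First I would verify that $\sM^{{\rm{null}}}_4\subset\sM_4$ is a reduced divisor. For a smooth non-hyperelliptic curve $C$ of genus $4$, the canonical model $C\subset\mP^3$ lies on a unique irreducible quadric $Q_C$, and, as recalled in the introduction, $C$ admits a (unique) vanishing thetanull if and only if $Q_C$ has rank $3$, i.e.\ is a cone. The rank-$3$ locus inside $\mP(H^0(\mP^3,\sO(2)))\simeq\mP^9$ is the classical reduced discriminant hypersurface, cut out by the single irreducible polynomial $\det$. The open stratum $\sM^{{\rm{null}},0}_4$ is realised as a $\PGL_4$-quotient of a smooth open subset of the parameter space of transverse $(Q,S)$-pairs, and this quotient morphism is smooth on the locus where $\Aut(C)$ is trivial; therefore the pull-back of the reduced discriminant is a reduced divisor, proving that $\sM^{{\rm{null}},0}_4$ is reduced. (Alternatively, the statement is contained in Teixidor's analysis of the theta-null divisor, see \cite[Theorem 2.4]{Mo}.)

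Next I would invoke the standard fact that the forgetful morphism $\xi\colon\overline{{\rm{S}}^+_4}\to\overline{\sM_4}$ is étale of degree $136=2^{3}(2^4+1)$ over the locus of curves with trivial automorphism group, since its fibers over such $[C]$ are torsors under $J(C)[2]$. By definition $\sM^{{\rm{null}},0}_4$ is contained in this étale locus, so $\xi^{-1}(\sM^{{\rm{null}},0}_4)$ is the étale pull-back of a reduced scheme, hence reduced. Since a vanishing thetanull on a smooth non-hyperelliptic genus $4$ curve is unique when it exists, the decomposition (\ref{dsunion}) restricts to a disjoint, open-closed decomposition
\[
\xi^{-1}(\sM^{{\rm{null}},0}_4)=\bigl({{{\rm{S}}^{{\rm{null}},0 }_{4}}}\cap\xi^{-1}(\sM^{{\rm{null}},0}_4)\bigr)\sqcup\bigl(\Theta_{4,\rm{null}}\cap\xi^{-1}(\sM^{{\rm{null}},0}_4)\bigr),
\]
so each summand is reduced. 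The first summand is a dense open subscheme of ${{{\rm{S}}^{{\rm{null}},0 }_{4}}}$ containing the generic points of all its irreducible components, so reducedness at these generic points forces reducedness of the whole divisor ${{{\rm{S}}^{{\rm{null}},0 }_{4}}}$.

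The main obstacle is really only the first step: carefully checking that the reduced discriminant divisor on the space of quadrics descends to a reduced divisor on $\sM_4$. The clean route is the smoothness of the $\PGL_4$-quotient morphism over the automorphism-free locus, which ensures that smooth pull-back preserves reducedness; everything after that is formal étale descent plus the uniqueness of the vanishing thetanull in genus four.
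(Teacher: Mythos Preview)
Your argument is essentially the paper's own: reducedness of $\sM^{{\rm{null}}}_4$ together with \'etaleness of the forgetful map over the automorphism-free locus gives smoothness (hence reducedness) at the generic point of ${{{\rm{S}}^{{\rm{null}},0}_{4}}}$; the paper just states these two inputs as well-known and concludes. One small slip: the fibre of $\xi$ over $[C]$ consists of the \emph{even} theta characteristics, which is not a $J(C)[2]$-torsor (only the full set of theta characteristics is); but this does not affect the \'etaleness, which follows from the rigidity of a theta characteristic in families together with $\Aut(C)=\{\id\}$.
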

\begin{proof}
It is well-known that $\sM^{{\rm{null}}}_{4}$ is reduced irreducible and that  the restriction ${\rm{S}}^{{\rm{null}},0}_4 \to \sM^{{\rm{null}}}_{4}$ of the forgetful morphism is \'etale. Hence the general point of ${\rm{S}}^{{\rm{null}},0}_4$ is smooth. Hence ${{{\rm{S}}^{{\rm{null}},0}_{4}}}$ is reduced.
\end{proof}

\subsection{The Prym canonical map}
\label{Prymcanonical}

Let $[C,\theta]\in {\rm{S}}^{+{\rm{null}}}_{4}$ be a general element, in particular the automorphism group of $C$ is the trivial one. Let $\delta$ be the unique $g^1_3$ on $C$. The map $\varphi_{|\theta+\delta|}\colon C\to \mP(H^0(C,\sO_C(\theta+\delta ))^\star)$ is known as the Prym canonical map and it is known that $\theta-\delta$ is a 2-torsion divisor. 

\subsubsection{The Prym canonical map is a morphism}
Actually $\varphi_{|\theta+\delta|}$ can be geometrically interpreted thanks to our theory. Indeed we can recover it via the  restriction to $\varphi^{-1}(R)$ of the diagram (\ref{universal&projection}) where $[R]\in\sH$. First we show:

\begin{lem} \label{image}For a general $[C,\theta]\in  {\rm{S}}^{ {\rm{null}}, 0}_4$ the 
    linear system $|\delta+\theta|$ gives a morphism 
    $\phi_{|\delta+\theta|}\colon C\to\mP^{2}$.
\end{lem}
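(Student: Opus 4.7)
The plan is to establish two facts: (i) $h^0(C,\sO_C(\theta+\delta))=3$, so that $|\theta+\delta|$ defines a rational map to $\mP^2$; and (ii) this rational map is base-point-free for a general $[C,\theta]\in{\rm{S}}^{{\rm{null}},0}_4$. First I would exploit that on a genus-$4$ curve with a vanishing thetanull the unique $g^1_3$ is characterised by $2\delta\sim K_C$, so $\delta$ is itself an odd theta characteristic. Together with $2\theta\sim K_C$, this gives that $\eta:=\theta-\delta\in\mathrm{Pic}^0(C)$ is a $2$-torsion class, non-trivial because $\theta$ is even and ineffective while $\delta$ is odd with $h^0(\delta)=2$. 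Applying Riemann--Roch and Serre duality,
$$
h^0(\theta+\delta)-h^0(K_C-\theta-\delta)=\deg(\theta+\delta)-g+1=3,
$$
and since $K_C-\theta-\delta\equiv\eta$ is a non-trivial degree-$0$ class, $h^0(\eta)=0$, whence $h^0(\theta+\delta)=3$.

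Next I would translate base-point-freeness into a Brill--Noether condition on $\eta$. Suppose $p\in C$ is a base point of $|\theta+\delta|$, so that $h^0(\theta+\delta-p)=3$. Riemann--Roch applied to the degree-$5$ divisor $\theta+\delta-p$ yields
$$
h^0(\eta+p)=h^0(K_C-\theta-\delta+p)=3-(5-g+1)=1,
$$
so there exists $q\in C$ with $\eta\sim q-p$ in $J(C)$. Thus base points of $|\theta+\delta|$ correspond bijectively to representations of the $2$-torsion class $\eta$ as a difference $q-p$ of points of $C$; equivalently, $\eta$ must lie in the difference surface $C-C\subset J(C)$.

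The last step is to argue that a general $[C,\theta]\in{\rm{S}}^{{\rm{null}},0}_4$ avoids this locus. Since $C$ is non-hyperelliptic (its canonical model lies on a quadric cone), the difference map $C\times C\to J(C)$ is generically injective and $C-C$ is an irreducible $2$-dimensional subvariety of the $4$-dimensional $J(C)$. Consequently the condition ``$\eta\in C-C$'' cuts out a proper closed subscheme of the universal Prym datum over ${\rm{S}}^{{\rm{null}},0}_4$, which has dimension $8$. Since base-point-freeness is an open condition in families, it is enough to produce a single pair $(C_0,\theta_0)$ with no base point; this is achieved, for instance, by taking $C_0=Q\cap F$ with $Q$ a quadric cone and $F$ a generic smooth cubic in $\mP^3$ and $\theta_0=\delta_0+\eta_0$ for a $2$-torsion $\eta_0$ not represented as a difference of points of $C_0$.

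The main obstacle is this last verification: one must confirm that among the $255$ non-trivial $2$-torsion classes on $C$ the specific class $\eta=\theta-\delta$ need not lie in $C-C$. A purely formal route is to observe that as $[C,\theta]$ varies, the pair $(C,\eta)$ varies with non-trivial image in $\mathcal{R}_4$, and to verify that the sublocus of $\mathcal{R}_4$ on which $\eta\in C-C$ is a proper divisor; alternatively an explicit cubic $F$ can be written down to exhibit a concrete base-point-free example, and openness then completes the argument.
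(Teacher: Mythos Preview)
Your Riemann--Roch computation giving $h^0(\theta+\delta)=3$ is correct and matches the paper. From that point on, however, you take a much longer route than necessary and leave the argument genuinely incomplete: you reduce base-point-freeness to the condition $\eta\notin C-C$, then propose to verify this by a dimension count or by exhibiting an explicit example, neither of which you actually carry out. (Your suggested example is circular: ``take $\eta_0$ not represented as a difference of points of $C_0$'' is precisely the condition to be checked.)

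The paper's proof is three lines and works for \emph{every} non-hyperelliptic $[C,\theta]\in{\rm{S}}^{{\rm{null}},0}_4$, not just general ones. The observation you miss is that your own intermediate step already yields the contradiction. You correctly derive that a base point $p$ forces $h^0(\eta+p)=1$, i.e.\ $\eta\sim q-p$ for some $q\in C$. But then
\[
\theta\;=\;\delta+\eta\;\sim\;(\delta-p)+q,
\]
and since $|\delta|$ is a base-point-free $g^1_3$ (as $C$ is non-hyperelliptic), the class $\delta-p$ has an effective representative $q'+r'$. Hence $\theta\sim q+q'+r'$ is effective, contradicting $h^0(\theta)=0$. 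In the paper's phrasing: from $h^0(\theta+\delta-p)=3$ and $\delta-p\sim q'+r'$ one gets $h^0(\theta+q'+r')=3$, and subtracting the two points gives $h^0(\theta)\geq 1$.

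So the locus $\{\eta\in C-C\}$ that you are trying to show is \emph{proper} in ${\rm{S}}^{{\rm{null}},0}_4$ is in fact \emph{empty} there; the difference-surface machinery, the genericity argument, and the search for an explicit example are all unnecessary.
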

\begin{proof} Since $\theta$ is ineffective, $h^{0}(C,\sO_{C}(\theta+\delta))=3$. If $|\delta+\theta|$ had a base point $p$ then $h^{0}(C,\sO_{C}(\theta+\delta-p))=3$. This would imply $h^{0}(C,\sO_{C}(\theta))\geq 1$; a contradiction.
\end{proof}

\subsubsection{Thetasymmetric curves}
We need to 
understand some geometry of the morphism $\phi_{|\delta+\theta|}\colon 
C\to\mP^{2}$. Our argument needs  the following Proposition which has its own interest:

\begin{prop}\label{symmetry}
Let $\Gamma$ be a smooth non-hyperelliptic curve of genus $4$ 
with two different $g^1_3$'s $\delta$ and $\delta'$.
Let $\theta$ be an ineffective theta characteristic.
Then 
$|\theta+\delta|$ and $|\theta+\delta'|$ are base point free
and the images $M$ and $M'$ of $\Gamma$ defined, respectively,
by these linear systems are
plane sextic curves.
Moreover 
$h^0(\Gamma, \sO_{\Gamma}(\theta+\delta-\delta'))>0$, which is equivalent to
$h^0(\Gamma, \sO_{\Gamma}(\theta+\delta'-\delta))>0$,
if and only if
$M$ and $M'$ have triple points.
If this condition is satisfied, then
$h^0(\Gamma, \sO_{\Gamma}(\theta+\delta-\delta'))=h^0(\Gamma, \sO_{\Gamma}(\theta+\delta'-\delta))=1$ and
$M$ and $M'$ have a unique ordinary triple point respectively.
\end{prop}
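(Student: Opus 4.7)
My plan has four main steps.

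First I would use Riemann--Roch on $\Gamma$ to compute $h^{0}(\Gamma,\sO_{\Gamma}(\theta+\delta))=h^{0}(\Gamma,\sO_{\Gamma}(\theta+\delta'))=3$: the residual classes $K-(\theta+\delta)=\theta-\delta$ and $K-(\theta+\delta')=\theta-\delta'$ have degree $0$ and are non-trivial since, $\theta$ being ineffective and $\delta,\delta'$ being effective, $\theta\not\sim\delta,\delta'$. For base-point freeness of $|\theta+\delta|$, a base point $p$ would yield $h^{0}(\theta-\delta+p)=1$ by the same Riemann--Roch computation, so $\theta\sim\delta-p+q$ for some $q\in\Gamma$; invoking base-point freeness of $|\delta|$ (which holds since $\Gamma$ is non-hyperelliptic), one writes $\delta-p+q\sim(\delta_{p}-p)+q$ with $\delta_{p}\in|\delta|$ containing $p$, which is effective, contradicting ineffectiveness of $\theta$. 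The same argument works for $|\theta+\delta'|$. To conclude that $M,M'$ are plane sextics, I would argue that $\phi:=\phi_{|\theta+\delta|}$ is birational onto its image: otherwise $\phi$ would factor through a $g^{1}_{3}$ on $\Gamma$ followed by a Veronese embedding, yielding $\theta+\delta\sim 2\delta$ (so $\theta\sim\delta$, effective) or $\theta+\delta\sim 2\delta'$ (so the image is a conic, not a sextic).

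Next I would settle the symmetry $h^{0}(\Gamma,\sO_{\Gamma}(\theta+\delta-\delta'))=h^{0}(\Gamma,\sO_{\Gamma}(\theta+\delta'-\delta))$ by Serre duality, using $K=2\theta$: for $D:=\theta+\delta-\delta'$ of degree $3$, Riemann--Roch gives $h^{0}(D)=h^{0}(K-D)$, and $K-D=\theta+\delta'-\delta$.

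The core of the argument would then be the geometric equivalence with triple points. For the forward direction, I would pick an effective $E\in|\theta+\delta-\delta'|$ of degree $3$, so that for every $E'\in|\delta'|$ the divisor $E+E'$ lies in $|\theta+\delta|$, i.e., $\phi^{-1}(\ell_{E'})=E+E'$ for some line $\ell_{E'}\subset\mP^{2}$. As $E'$ runs over the pencil $|\delta'|$, the lines $\ell_{E'}$ form a $\mP^{1}$-family each containing the length-$3$ subscheme $\phi_{*}E$; since a $\mP^{1}$-family of distinct lines in $\mP^{2}$ cannot share more than one point, this scheme must collapse to a single point $p\in M$, making $p$ a triple point of $M$. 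Conversely, from a triple point $p\in M$ I would pull back the pencil of lines through $p$ to a pencil in $|\theta+\delta|$ whose members share a common degree-$3$ divisor $E$ supported over $p$; the residual pencil is a $g^{1}_{3}$ on $\Gamma$, which the classification of $g^{1}_{3}$'s forces to be $|\delta|$ or $|\delta'|$, and $|\delta|$ is excluded as it would imply $\theta\sim\delta'$ effective, so $E\sim\theta+\delta-\delta'$.

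To finish, $h^{0}(\theta+\delta-\delta')\geq 2$ would make $|\theta+\delta-\delta'|$ itself a $g^{1}_{3}$, equal to $|\delta|$ (giving again $\theta\sim\delta'$, excluded) or to $|\delta'|$ (giving $|\theta+\delta|=|2\delta'|$ and collapsing $M$ to a conic, contradicting step one); hence $h^{0}=1$ and $E$ is unique. Uniqueness of the triple point follows because any second one would produce a second element of $|\theta+\delta-\delta'|$, and ordinariness because the three points of $E$ must be distinct and the pencil $|\delta'|$ separates their tangent directions, giving three distinct branches of $M$ at $p$. The hardest part will be the clean back-and-forth between the effective divisor $E$ on $\Gamma$ and the infinitesimal branch data of $M$ at $p$, and in particular the exclusion of the borderline loci $\theta\sim\delta'$ and $\theta\sim 2\delta'-\delta$, where the ineffectiveness of $\theta$ must be invoked a second time.
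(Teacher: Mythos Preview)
Your approach matches the paper's at its core: both prove the correspondence between effective $E\in|\theta+\delta-\delta'|$ and triple points of $M$ by observing that $E+|\delta'|$ is a pencil inside $|\theta+\delta|$, hence a pencil of lines through a single point $\phi(E)$ of $M$, and both reverse this by reading off the residual $g^1_3$ from the pencil of lines through a triple point. You supply considerably more detail than the paper on the preliminaries---base-point freeness, birationality onto a sextic, and the Serre-duality symmetry $h^0(\theta+\delta-\delta')=h^0(\theta+\delta'-\delta)$---all of which the paper either declares easy or omits. The one substantive divergence is in the bound $h^0(\theta+\delta-\delta')\le 1$: the paper deduces it from the finiteness of triple points on a plane curve (distinct $E$'s would give distinct triple points, since a coincidence would force a point of multiplicity $\ge 4$, excluded by non-hyperellipticity), whereas you argue that $h^0\ge 2$ would make $|\theta+\delta-\delta'|$ itself a $g^1_3$, hence equal to $|\delta|$ or $|\delta'|$, and rule out each. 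Both routes work.

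Two small gaps remain. In your birationality step you only exclude the factorization through a $g^1_3$ followed by the Veronese (degree $3$ onto a conic); you do not treat the possibility of a degree-$2$ map onto a plane cubic, which would require a separate argument (and is not addressed in the paper either). And your ordinariness argument asserts without proof that the three points of $E$ are distinct and have separated tangent images; the paper's proof does not justify ordinariness at all, so you are at least ahead in flagging it.
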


\begin{proof}
By the symmetry of $\delta$ and $\delta'$ 
it suffices to prove the assertions for
$|\theta+\delta|$ and $M$. It is easy to see that $h^0(\Gamma, \sO_{\Gamma}(\theta+\delta))=3$. Since $\Gamma$ is not hyperelliptic,
$M$ has no quadruple point.
  Assume that $h^0(\Gamma, \sO_{\Gamma}(\theta+\delta-\delta'))>0$.
  Let $\eta \in |\theta+\delta-\delta'|$. 
  We have $\eta+\delta'\sim \pi^*\sO_M(1)$.
  Since $\deg \eta=3$ and $\dim |\delta'|=1$, 
  the divisor $\eta$ consists of three points
  which are the pull-back of a triple point.  
  By reversing the argument, we see that
  the divisor consisting of the pull-back of a triple point 
  is a member of $|\theta+\delta-\delta'|$.
  Since $M$ has only a finite number of triple points,
  $M$ has actually a unique triple point and
  $h^0(\Gamma, \sO_{\Gamma}(\theta+\delta-\delta'))=1$.  
\end{proof}

We think that genus $4$ curves as those of Proposition \ref{symmetry} deserve a name:
\begin{defn} Let $\delta$ and $\delta'$ be two distinct $g^1_3$ on a curve $C$ of genus $4$. 
The spin curve $[C,\theta]$ is called {\it{$g^1_3$-thetasymmetric}} if  $h^0(C,\sO_C(\theta+\delta-\delta'))=1$.
\end{defn}

\begin{rem} Actually $g^1_3$-thetasymmetric curves correspond to the rational sextics of Lemma \ref{threecases} $({\rm{II}})$. \end{rem}

\subsubsection{The image of the Prym canonical map}
Now we can describe the image $M$ of $C$ by the morphism $\phi_{|\delta+\theta|}\colon C\to\mP^2$ where  $[C,\theta]\in { \rm{S}}^{{\rm{null}},0}_4$; more precisely:

\begin{prop}\label{threesexstic} Let $[C,\theta]\in  {\rm{S}}^{{\rm{null}},0}_4$ be a general element.
    Then the image $M$ of the morphism 
    $\phi_{|\theta+\delta|}\colon C\to\mP^{2}$ is a sextic with six 
    nodes which are given by the six point of the $(4,6)$ configuration associated to $4$ lines $L_1, L_2, L_3, L_4$ in general position.
 \end{prop}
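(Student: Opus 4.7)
The strategy is to realize $(C,\theta)$ as coming from a rational sextic $R$ of conic type via the $3{:}1$ covering $\varphi\colon\sU_1\to B$, and then to read off the singularities of the image $M$ from the projective geometry of lines on $B$ that meet $R$.

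\emph{Step 1 (Dictionary).} For a general $[C,\theta]\in \mathrm{S}^{\rm null,0}_4$, use the reconstruction correspondence sketched in the Introduction (and to be made precise in Theorem \ref{perlomodulothree}) to identify $[C,\theta]$ with $[(C_R,\theta)]$ for some general $[R]\in\mathring{\sH}_{\rm c.t.}$. Then $C=C_R=\varphi^{-1}(R)\subset\sU_1$, and the Prym canonical morphism $\phi_{|\theta+\delta|}$ coincides with $\pi|_{C_R}\colon C_R\to\sH^B_1=\mP^2$. By Lemma \ref{threecases}(III), $R\in |2\lambda|$ on the smooth quintic del Pezzo hyperplane section $Z=\langle R\rangle\cap B\simeq\mathrm{Bl}_{a_1,\dots,a_4}\mP^2$, with exceptional curves $e_1,\dots,e_4$ and $\lambda$ the pull-back of a line.

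\emph{Step 2 (Degree and birationality).} Since $\deg(\theta+\delta)=3+3=6$ and the linear system has dimension $2$, one has $6=(\deg\phi)\cdot\deg M$. The case $\deg\phi=6$ is excluded because $|\theta+\delta|$ is complete of dimension $2$. If $\deg\phi=3$, then $\phi$ factors through the unique $g^1_3$, forcing $\theta+\delta\sim 2\delta$, whence $\theta\sim\delta$ would be effective, contradicting the ineffectivity of $\theta$. If $\deg\phi=2$, the induced Galois involution on $C$ would contradict $\mathrm{Aut}(C)=\{\mathrm{id}\}$, which holds generically on $\sM^{\rm null,0}_4$. Thus $\phi$ is birational and $\deg M=6$; in particular $p_a(M)=\binom{5}{2}=10$ and the total $\delta$-invariant of the singularities of $M$ equals $10-4=6$.

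\emph{Step 3 (Six ordinary nodes).} A singularity of $M$ arises exactly when two distinct points $p,q\in C_R$ with $\varphi(p)\neq\varphi(q)$ satisfy $\pi(p)=\pi(q)=[l]$, i.e.\ when $l$ is a line of $B$ bisecant to $R$ through $\varphi(p),\varphi(q)$. By Proposition \ref{prop:Cd1III}, $R$ has exactly six bisecants on $B$, each meeting $R$ simply, and no $k$-secants with $k\geq 3$. Hence $\pi|_{C_R}$ identifies exactly six transverse pairs of points; the $\delta$-invariant count then forces these to be the only singularities of $M$ and to be ordinary nodes.

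\emph{Step 4 (The four lines).} On $Z$ the ten $(-1)$-curves are the $e_i$ and the $f_{ij}:=\lambda-e_i-e_j$ for $1\leq i<j\leq 4$. One computes $R\cdot e_i=2\lambda\cdot e_i=0$ and $R\cdot f_{ij}=2\lambda\cdot(\lambda-e_i-e_j)=2$. Any line of $B$ bisecant to $R$ lies in $\langle R\rangle\cap B=Z$, so the six bisecants are exactly the $f_{ij}$. The curves $e_i$ remain lines of $B$, giving four points $[e_i]\in\sH^B_1=\mP^2$. By Proposition \ref{prop:FN}(5), each $M(e_i)=:L_i$ is a line in $\mP^2$, and $[l]\in L_i$ iff $l$ meets $e_i$ on $B$. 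Since $f_{ij}\cdot e_k=\delta_{ki}+\delta_{kj}$ on $Z$, the node $[f_{ij}]$ lies on exactly $L_i$ and $L_j$, and each $L_i$ carries exactly the three nodes $\{[f_{ij}]:j\neq i\}$. This is the $(4_3,6_2)$-incidence diagram.

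\emph{Step 5 (General position).} By Proposition \ref{prop:polar}, $L_i$ is the polar of $[e_i]$ with respect to the invariant conic $\Omega\subset\sH^B_1$; thus $L_i=L_j$ iff $[e_i]=[e_j]$, and $L_i,L_j,L_k$ are concurrent iff $[e_i],[e_j],[e_k]$ are collinear. For a generic $[R]\in\mathring{\sH}_{\rm c.t.}$ the four points $[e_i]\in\sH^B_1$ are in general position, as follows either from a parameter count using the explicit description of $\mathring{\sH}_{\rm c.t.}$ in Proposition \ref{cleaning} or from an explicit example, so that the $L_i$ are in general position.

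\emph{Main difficulty.} The delicate point is Step 5: one must rule out \emph{a priori} that the rigid polar geometry of $B$ forces extra incidences among the $[e_i]$. This is exactly where Proposition \ref{prop:polar} and the bundle description of $\mathring{\sH}_{\rm c.t.}$ in Proposition \ref{cleaning} are decisive, translating ``general $R$'' into ``general configuration of four exceptional lines on a generic hyperplane section $Z$''.
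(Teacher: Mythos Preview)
Your argument is circular. In Step~1 you invoke the Reconstruction Theorem~\ref{perlomodulothree} to identify a general $[C,\theta]\in{\rm S}^{{\rm null},0}_4$ with some $[(C_R,\theta)]$ coming from $[R]\in\mathring{\sH}_{\rm c.t.}$. But the proof of Theorem~\ref{perlomodulothree} in the paper \emph{uses} Proposition~\ref{threesexstic}: it begins with ``Consider the blow-up $\sigma\colon S\to\mP^2$ which induces the Prym canonical map on $C$. By Lemma~\ref{threesexsticbis}\ldots'' and later ``we consider the four lines $L_1,\dots,L_4\subset\mP^2$ of Proposition~\ref{threesexstic}''. The logical order is: first prove \emph{intrinsically} that the Prym canonical image of a general $[C,\theta]$ is a nodal sextic with nodes in the $(4,6)$ configuration; only then can one match this configuration against the polar geometry of $(\sH^B_1,\Omega)$ to reconstruct $R$. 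Your Steps~2--5 essentially reprove Proposition~\ref{newIII} and Lemma~\ref{orabisecantiIII}, which show that curves \emph{in the image} of $\pi^+|_{\mathring{\sH}_{\rm c.t.}}$ have the stated property; they do not show that a general point of ${\rm S}^{{\rm null},0}_4$ lies in that image.

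The paper's proof avoids this by working entirely on $C$ and the blow-up $S\to\mP^2$ at the six nodes. After establishing $\deg M=6$ and ruling out triple points via Proposition~\ref{symmetry} (which needs two distinct $g^1_3$'s, impossible here), the heart of the argument is a cohomological computation: writing $\alpha=(\sum E_{ij})_{|C}$ and using $h_{|C}\sim\theta+\delta$, $2\theta\sim K_C$, and adjunction, one finds $2\alpha\sim 4h_{|C}$, and then the exact sequence for $\sO_S(4h-\sum E_{ij})$ restricted to $C$ forces $h^0(S,\sO_S(4h-2\sum E_{ij}))=1$. The unique quartic through the six nodes with multiplicity~$2$ must then split as four lines, which is exactly the $(4,6)$ configuration. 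This is the missing idea you need to supply.
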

\begin{proof} By Lemma \ref{image} if ${\rm{deg}}(M)<6$ then ${\rm{deg}}(M)=2$ or $3$ since ${\rm{deg}}(\sO_C(\theta+\delta)=6$. We distinguish these two cases.

If ${\rm{deg}}(M)=2$ then $\theta+\delta=2\delta$, a contradiction. 
    
    If 
    ${\rm{deg}}(M)=3$ and $M$ is singular then $C$ is hyperelliptic, 
    a contradiction to the generality of $[C,\theta]\in \sS^{{\rm{null}},0}_4$. 
    
    If  ${\rm{deg}}(M)=3$ and $M$ is smooth, then $C$ is 
    bielliptic and again we exclude this case by the generality of 
    $[C,\theta]$ 
    since the bielliptic locus is $6$-dimensional.

    We have shown that ${\rm{deg}}(M)=6$. Since  $C$ is not hyperelliptic then the maximal order of a singular 
    point of $M$ is $3$. On the other hand if $M$ has a singular 
    point of multiplicity $3$ then by Proposition \ref{symmetry}, $C$ 
    is $g^{1}_{3}$-symmetric, a contradiction. Hence $M$ is a sextic and its singular points are 
    nodes. Since $g(C)=4$ and the arithmetical genus $\rho_{a}(M)=10$ 
    then we have that $M$ has exactly $6$ points of multiplicity $2$. Now we show that 
    these six points $n_{ij}$ where $i,j=1,2,3,4$, $i< j$ are 
    in a special position. We denote by $\mathfrak T$ the set of the transpositions $(i,j)$ where $4\geq j>i\geq 1$.
    
     Let
$\sigma\colon S\to\mP^{2}$ be the blow-up along the 
    six points $n_{ij}$, $(i,j)\in \mathfrak T$.     
    Then 
    $$h_{\mid C}\sim \theta+\delta$$ where $h$ is the total
    $\sigma$-trasform of a line. Set 
    $E_{ij}:=\sigma^{-1}(n_{ij})$ and 
    learly $C\in |6h-2\sum_{(i,j)\in \mathfrak T} E_{ij} | $. Since there is a unique 
    $g^{1}_{3}$, by adjunction on $S$ it holds that 
    $2\delta\sim\omega_{C}\sim
    (3h-\sum_{(i,j)\in\mathfrak T} E_{ij})_{\mid C}$. 
    Set 
    $$\alpha:=(\sum_{(i,j)\in \mathfrak T} E_{ij})_{|C}.$$
    \noindent
    Since $h_{\mid C}\sim \theta+\delta$ it holds that $2\delta\sim 
    3\theta+3\delta-\alpha$. Then $\alpha\sim 
    2\theta+\theta+\delta\sim \omega_{C}+h_{\mid C}=4h_{\mid C}-\alpha$.
    
    \noindent
    {\it{Claim: $H^{0}(S,\sO_{S}(4h-\sum_{(i,j)\in \mathfrak T} E_{ij}))=H^{0}(C,\sO_{C}(4h-\sum_{(i,j)\in \mathfrak T}E_{ij}))$.}} Indeed since $\alpha\simeq (4h-\sum_{(i,j)\in\mathfrak T} E_{ij})_{\mid C}$ then by the exact sequence 
    giving the sheaf $\sO_{C}$ as an $\sO_{S}$-module we obtain the 
    following exact sequence:
    \begin{equation}\label{seq for two}
	0\to\sO_{S}(-2h+\sum_{(i,j)\in\mathfrak T}E_{ij})\to \sO_{S}(4h-\sum_{(i,j)\in\mathfrak T}E_{ij})\to\sO_{C}(\alpha)\to 0
\end{equation}
Since $H^{0}(S,\sO_{S}(-2h+\sum_{(i,j)\in\mathfrak T}E_{ij}))=0$ and since by Serre's duality 
    $H^{2}(S,\sO_{S}(-2h+\sum_{(i,j)\in\mathfrak T} E_{ij}))=0$ then Rieman-Roch's theorem on $S$ gives 
    $-H^{1}(S,\sO_{S}(-2h+\sum_{(i,j)\in\mathfrak T} E_{ij}))=\chi(\sO_{S}(-2h+\sum_{(i,j)\in\mathfrak T} E_{ij}))=0$. This immediately shows our claim.
    
    Finally we have only to point out that the divisor $\sum_{(i,j)\in\mathfrak T} E_{ij}$ itself cuts the divisor $\alpha$ on $C$.  By the isomorphism $H^0(S, \sO_S(4h-\sum_{(i,j)\in\mathfrak T} E_{ij}))=H^0(C,\sO_C(\alpha))$ it follows that there exists a unique $\varsigma\in H^0(S, \sO_S(4h-\sum_{(i,j)\in\mathfrak T} E_{ij}))$ such that $\varsigma'_{|C}=\alpha$. By tensorialising the sequence (\ref{seq for two}) by $\sO_S(-\sum_{(i,j)\in\mathfrak T} E_{ij})$
    this means that $h^{0}(S,\sO_{S}(4h-2\sum_{(i,j)\in \mathfrak T}E_{ij}))=1$, since $h^0(S,\sO_{S}(-2h))= h^1(S,\sO_{S}(-2h))=0$. This shows that the six points are 
    three by three on 
    four lines since $M$ is irreducible.
\end{proof}

\begin{lem}\label{threesexsticbis}
Let $[C,\theta]\in { \rm{S}}^{{\rm{null}},0}_4$ be a general element and let $M$ be the image of the Prym canonical morphism 
    $\phi_{|\theta+\delta|}\colon C\to\mP^{2}$.
    Let $\sigma\colon S\to\mP^{2}$ be the 
    blow-up of $\mP^{2}$ at the six 
    nodes $n_{ij}\in \mP^{2}$ of $M$, let 
    $E_{ij}:=\sigma^{-1}(n_{ij})$ and let 
    $E_{ij|C}:=a_{ij}+b_{ij}$, where $(i,j)\in \mathfrak T$. Let 
    $a_{ij}+a^{1}_{ij}+a^{2}_{ij}$ and $b_{ij}+b^{1}_{ij}+b^{2}_{ij}$ be the two 
    distinct effective divisors of the unique trigonal series on $C$ which 
    contains $a_{ij}$ and respectively $b_{ij}$. Then 
    $\{\sigma(a^{1}_{ij}),\sigma(a^{2}_{ij}),\sigma(b^{1}_{ij}), \sigma(b^{2}_{ij}) \}$ is a 
    set of collinear points on a line $L_{ij}$ passing through the point 
    $n_{rs}$ where $\{i,j\}\cap\{r,s\}=\emptyset$ and $\{i,j,r,s\}=\{1,2,3,4\}$.
 \end{lem}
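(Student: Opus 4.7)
The plan is to realise the general element $[C,\theta]\in{\rm{S}}^{{\rm{null}},0}_4$ via the paper's rational-sextic-of-conic-type construction of \S\ref{doveimoduli}: writing $C=\varphi^{-1}(R)\subset \sU_1$ for some $R\subset B$ of conic type, the Prym canonical morphism $\phi_{|\theta+\delta|}$ coincides with the restriction to $C$ of $\pi\colon \sU_1\to \sH^B_1=\mP^2$ in diagram (\ref{universal&projection}); Proposition \ref{prop:polar}, the polar-conic characterisation of incident lines on $B$, then yields the collinearity almost immediately.

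Writing $Z=\langle R\rangle\cap B\cong{\rm{Bl}}_{a_1,\ldots,a_4}\mP^2$ with hyperplane class $\lambda$ and exceptional curves $e_1,\ldots,e_4$, so that $R\in|2\lambda|$, I would identify the six nodes of $M$ with the six bisecants of $R$ on $B$ provided by Proposition \ref{prop:Cd1III}: an intersection computation on $Z$ shows that these bisecants are exactly the six $(-1)$-curves $\ell_{ij}:=\lambda-e_i-e_j$ of $Z$, whose classes in $\sH^B_1=\mP^2$ are the six nodes $n_{ij}$ of $M$.

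Next, setting $\ell_{ij}\cap R=\{p_1,p_2\}$ and $a_{ij}=(p_1,\ell_{ij})$, $b_{ij}=(p_2,\ell_{ij})\in \sU_1$, the tricycle of $\delta$ through $a_{ij}$ equals the fibre $\varphi^{-1}(p_1)$, consisting of the three lines of $B$ through $p_1$ (one being $\ell_{ij}$ itself, by Mukai's VSP description of $B$). Hence $\sigma(a^1_{ij}),\sigma(a^2_{ij})$ are the classes of the two other lines through $p_1$, and analogously for $\sigma(b^k_{ij})$ with $p_2$ in place of $p_1$; all four of the corresponding lines of $B$ meet $\ell_{ij}$. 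Moreover the complementary bisecant satisfies $\ell_{ij}\cdot\ell_{rs}=(\lambda-e_i-e_j)(\lambda-e_r-e_s)=1$ on $Z$, so $\ell_{rs}$ also meets $\ell_{ij}$. By Proposition \ref{prop:polar}, the locus in $\sH^B_1$ of lines of $B$ meeting $\ell_{ij}$ is precisely the polar line $L_{ij}$ of $n_{ij}=[\ell_{ij}]$ with respect to the invariant conic $\Omega$; therefore $L_{ij}$ contains both the four points $\sigma(a^k_{ij}),\sigma(b^k_{ij})$ and the node $n_{rs}$, which establishes the lemma.

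The main obstacle is the opening step: realising the general $[C,\theta]\in{\rm{S}}^{{\rm{null}},0}_4$ as arising from a rational sextic of conic type on $B$. Strictly speaking this is part of the Reconstruction Theorem \ref{perlomodulothree} proved later, but for the present lemma it suffices to observe that $\pi^+(\mathring{\sH}_{{\rm{c.t.}}})$ is a non-empty constructible subset of the irreducible divisor ${\overline{{\rm{S}}^{{\rm{null}},0}_4}}$ (irreducibility following from Proposition \ref{reducedness} together with the \'etale forgetful map to the irreducible $\sM^{{\rm{null}}}_4$), so the collinearity statement holds on a Zariski-dense open subset and therefore for the general element.
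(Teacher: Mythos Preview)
Your geometric argument in the middle two paragraphs is correct and attractive: once $[C,\theta]$ is known to arise from some $[R]\in\mathring\sH_{\rm c.t.}$, the polar description of incidence (Proposition~\ref{prop:polar}) gives the collinearity and the incidence $n_{rs}\in L_{ij}$ immediately. This is a genuinely different route from the paper, which instead proves the lemma by a direct linear-equivalence computation on $C$: using $h_{|C}\sim\theta+\delta$, $K_C\sim(3h-\sum E_{lm})_{|C}$ and $2\theta\sim K_C$, one manipulates to obtain
\[
h_{|C}-a^{1}_{ij}-a^{2}_{ij}-b^{1}_{ij}-b^{2}_{ij}\ \sim\ 2h_{|C}-\sum_{(l,m)\neq(i,j)}E_{lm|C},
\]
which is effective by Proposition~\ref{threesexstic}; a further cancellation then identifies this class with $E_{rs|C}$, giving $n_{rs}\in L_{ij}$. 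No $B$-geometry is invoked.

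The gap is in your last paragraph, and it is not repairable in the paper's logical order. First, the two ingredients you cite do not give irreducibility of ${\overline{{\rm S}^{{\rm null},0}_4}}$: Proposition~\ref{reducedness} only asserts reducedness, and an \'etale cover of the irreducible base $\sM^{\rm null}_4$ can perfectly well have several components (here the cover has degree $135$). In the paper, irreducibility is only obtained in Proposition~\ref{forrationalityvanishing}, \emph{after} the Reconstruction Theorem. Second, even granting irreducibility, a non-empty constructible subset of an irreducible variety need not be dense; you would still need a dimension argument showing that the image of $\mathring\sH_{\rm c.t.}$ has dimension $8$, i.e.\ that the fibres of $\pi^+$ over it are generically exactly $G$-orbits, which again is part of the later analysis.

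More seriously, the lemma is an input to the Reconstruction Theorem~\ref{perlomodulothree} (its proof opens by invoking Lemma~\ref{threesexsticbis} to produce the lines $L_{ij}$). So any argument that presupposes the general $[C,\theta]$ comes from $\mathring\sH_{\rm c.t.}$ is circular in this paper. The paper's intrinsic divisor computation is chosen precisely to break that circle; your $B$-geometric argument is then a pleasant a posteriori explanation, but it cannot serve as the proof here.
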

 \begin{proof} The proof is quite easy.  We need to show that $h^0(C,\sO_C(h_{|C}- a^{1}_{ij}-a^{2}_{ij}-b^{1}_{ij}-b^{2}_{ij}))>0$. Indeed let $a+b+c+d$ be the 
     unique effective divisor contained inside $|\theta+a_{ij}|$ and 
     let $a'+b'+c'+d'$ be the 
     unique effective divisor contained inside $|\theta+b_{ij}|$. 
     Since $2\theta\sim K_{C}$ then 
     $$a+b+c+d+a'+b'+c'+d'\sim 
     K_{C}+ E_{ij|C}.
     $$\noindent 
     On the other hand $h_{|C}\sim 
     \theta+\delta$ where $h$ is the pull-back of the line of 
     $\mP^{2}$. Then $a+b+c+d=h_{|C}-a^{1}_{ij}-a^{2}_{ij}$ and  
     $a'+b'+c'+d'=h_{| C}-b^{1}_{ij}-b^{2}_{ij}$; that is:
     \begin{equation}\label{description}
      K_{C}+ E_{ij|C}\sim 2h_{|C}-a^{1}_{ij}-a^{2}_{ij}-b^{1}_{ij}-b^{2}_{ij}.
     \end{equation} Finally by adjuntion 
     $K_{C}\sim 3h_{|C}-(\sum_{(l,m)\in\mathfrak T} E_{lm})_{|C}=3h_{|C}-\alpha$.
     Then by equation (\ref{description}) we have:
     $$3h_{|C}-\alpha+E_{ij|C}\sim 2h_{|C}- a^{1}_{ij}-a^{2}_{ij}-b^{1}_{ij}-b^{2}_{ij}
     .$$\noindent
     Now we cancel $h_{|C}$ in both members of the above equality to obtain:
    $$2h_{|C}-\alpha+E_{ij|C}=2h_{|C}-\sum_{(lm)\neq (ij)}E_{lm|C}\sim
    h_{|C}- a^{1}_{ij}-a^{2}_{ij}-b^{1}_{ij}-b^{2}_{ij}$$ 
    By Lemma \ref{threesexstic} the first claim follows since it there exists a (unique)
    conic through $5$ points. Let us call $T_{34}$ the unique line such that $\sigma^{\star}(T_{34})$ contains $a^{1}_{12},a^{2}_{12},b^{1}_{12},b^{2}_{12}$. We show that $n_{34}\in T_{34}$. 
    Indeed the effective divisor 
    $$
    2h_{|C}-\alpha +E_{12|C}=h_{|C}-(E_{13}+E_{23}+E_{34})_{|C}+h_{|C}-(E_{14}+E_{24}+E_{34})_{|C} +E_{34|C}. 
    $$
    Since $h_{|C}-(E_{13}+E_{23}+E_{34})_{|C}$ and $h_{|C}-(E_{14}+E_{24}+E_{34})_{|C}$ are disjoint from $C$ it follows that
    $$
    E_{34|C}=h_{|C}- a^{1}_{12}-a^{2}_{12}-b^{1}_{12}-b^{2}_{12}.
    $$\noindent
    The remaining 5 cases can be treated identically.
   \end{proof}

\section{Construction of genus 4 spin curve with a vanishing theta-null via  sextics of conic type}
We restrict Diagram (\ref{universal&projection}) to a general rational sextic of conic type $R$:
\begin{equation}
\label{universal&projectionsecond}
\xymatrix{ & C(R)\subset \sU_1 \ar[dl]_{{ \pi}}
\ar[dr]^{\varphi}\\
M(R)\subset \sH^B_1=\mP^{2} &  & R\subset Z\subset  B\subset \mP^{6} }
\end{equation}
where $C(R):=\varphi^{-1}(R)$ and $M(R):= \pi(C(R))$.  

\subsubsection{The curve of genus $4$}
By Lemma \ref{threecases} we know that we can realise $R\subset B$ as a curve $R\subset Z=\langle R\rangle\cap B$ where on $Z$ there is a polarisation $|\lambda|$ such that $R\in |2\lambda|$ and $\phi_{|\lambda|}\colon Z={\rm{Bl}}_{a_{1},a_{2},a_{3},a_{4} }\mP^{2}\to\mP^2$. We set $e_i:= \phi_{|\lambda|}^{-1}(a_i)$, $i=1,...,4$.

\begin{lem}\label{primaCIII} Let $[R]\in\sH_{\rm{{c.t.}}}$ be a general element. 
The scheme $C(R)$ is a smooth curve of genus $4$ contained inside the 
universal family of lines of $B$. 
\end{lem}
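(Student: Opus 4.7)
The plan is to realise $C(R)\to R$ as the restriction to $R$ of the finite flat triple cover $\varphi\colon\sU_1\to B$ of Proposition \ref{prop:FN}, and to deduce the invariants of $C(R)$ from Riemann--Hurwitz. To begin with, $\varphi$ is finite of degree $3$ (through a general point of $B$ pass exactly three lines) and flat by miracle flatness, so $C(R)=\varphi^{-1}(R)$ is finite flat of degree $3$ over $R\simeq\mP^1$; since $\deg R\geq 2$, Notation \ref{nota} already ensures that no fibre of $\pi$ is contained in $\varphi^{-1}(R)$.

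For smoothness I would combine two ingredients. Proposition \ref{prop:FN} gives $B_\varphi\in|-K_B|=|2H|$ together with the decomposition $\varphi^*B_\varphi=B_{\varphi,1}+2B_{\varphi,2}$, where both $B_{\varphi,i}\to B_\varphi$ are birational; this says that over a general point of $B_\varphi$ the fibre of $\varphi$ consists of one unramified point and one point of ramification index $2$. Proposition \ref{prop:Cd1III}(3) then ensures that for a general $[R]\in\sH_{\rm{{c.t.}}}$ the curve $R$ meets $B_\varphi$ transversally at smooth points of $B_\varphi$, and since $\deg R=6$ this gives exactly $R\cdot B_\varphi=12$ intersection points. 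At each of these the restricted cover $C(R)\to R$ is simply ramified and locally modelled by $y^2=x$, yielding a smooth point of $C(R)$; étaleness of $\varphi$ outside $B_\varphi$ handles the rest.

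Riemann--Hurwitz then produces
\[
2g(C(R))-2=3\cdot(-2)+12=6,
\]
so $g(C(R))=4$ provided $C(R)$ is connected. I expect the connectedness step to be the subtlest point, since the same numerical data is equally consistent with $C(R)$ splitting as a disjoint union of a section of $\varphi_{|R}$ and a double cover of $R$ branched at the $12$ points. I would rule this out by a monodromy argument: the $\mP^1$-bundle structure $\pi\colon\sU_1\to\mP^2$ shows that $\sU_1$ is irreducible, hence $\varphi$ has transitive monodromy on a generic fibre. A standard Bertini-type argument then ensures that for a sufficiently general $[R]\in\sH_{\rm{{c.t.}}}$ the monodromy of $C(R)\to R$ still contains that of $\varphi$, so it is transitive and $C(R)$ is irreducible. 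Being smooth, $C(R)$ is then connected, which completes the proof.
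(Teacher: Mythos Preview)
Your argument follows exactly the paper's route: smoothness from Proposition \ref{prop:FN}(1) together with the transversality in Proposition \ref{prop:Cd1III}(3), and the genus from Riemann--Hurwitz using $B_\varphi\in|-K_B|$ and $\deg R=6$. The paper's own proof is three lines and invokes precisely these two inputs.

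Where you go further is in treating connectedness, which the paper does not address explicitly; it simply applies Hurwitz and reads off $g(C(R))=4$. You are right that the numerics alone are consistent with a section plus a genus-$5$ double cover, so some argument is needed. Your monodromy idea is sound, though the phrasing ``the monodromy of $C(R)\to R$ still contains that of $\varphi$'' is backwards: the restricted monodromy is a subgroup of the global one, and what you want is that for general $R$ they coincide. A cleaner way to close this gap, staying within the paper's set-up, is to work on the smooth hyperplane section $Z$ containing $R$: by a Lefschetz-type argument $\varphi^{-1}(Z)$ is irreducible, and since $|2\lambda|$ is base-point-free on $Z$, Bertini applied to the pulled-back linear system on $\varphi^{-1}(Z)$ gives irreducibility of $\varphi^{-1}(R)$ for general $R\in|2\lambda|$.
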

\begin{proof} By Proposition \ref{prop:FN} $(1)$ and by Proposition \ref{prop:Cd1III} it follows that $C(R)$ is smooth and the ramification divisor of $\varphi_{|C(R)}\colon C(R)\to R\subset B$ is simple. Since $B_{\varphi}\in |-K_{B}|$ and $6={\rm{deg\,{R}}}$,
we obtain $g(C(R))=4$ by the Hurwitz's formula.
\end{proof}

\subsubsection{The geometry of the plane model} Now we study the morphism $\pi_{|C(R)}\colon C(R)\to M(R)$.

\begin{lem}
    \label{orabisecantiIII}  Let $[R]\in\sH_{\rm{{c.t.}}}$ be a general element. Then $M(R)$ is an irreducible sextic with $6$ nodes. In particular 
$R$ has exactly $6$ bi-secant lines on $B$.
\end{lem}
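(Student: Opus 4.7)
The plan is to analyse $M(R)$ through the morphism $\pi|_{C(R)}\colon C(R)\to M(R)$, combining a degree computation via intersection theory on $\sH^B_1=\mP^2$ with the genus formula for plane curves.

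First I would compute $\deg M(R)$ as a plane curve. By Proposition \ref{prop:FN}\,$(3)$, a general line of $\mP^2$ has the form $M(l)$ for some non-special line $l\subset B$, and $M(R)\cap M(l)$ is the set of lines of $B$ meeting both $R$ and $l$. By Proposition \ref{prop:FN}\,$(5)$, the locus swept by the lines of $M(l)$ is the hyperplane section $T_l$, and through each point $b\in T_l\setminus l$ there passes exactly one line of $M(l)$. For a generic choice of $l$ one has $R\cap l=\emptyset$, and since $R\cdot H=6$ on $B$ one obtains $|R\cap T_l|=6$, whence $|M(R)\cap M(l)|=6$ and $M(R)$ is a sextic.

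Next I would read off the singularities of $M(R)$ from Proposition \ref{prop:Cd1III}. Part $(1)$ implies that no line meets $R$ in three or more points, so that a general line meeting $R$ meets it simply; hence $\pi|_{C(R)}\colon C(R)\to M(R)$ is birational. The six bi-secant lines of part $(2)$ give six points of $M(R)$ whose preimage in $C(R)$ consists of two distinct points, hence six singular points of $M(R)$ each with $\delta$-invariant $\ge 1$. Since $p_a(M(R))=\binom{5}{2}=10$ and $g(C(R))=4$ by Lemma \ref{primaCIII}, the genus formula for the normalisation
\begin{equation*}
p_a(M(R))-g(C(R))=\sum_{p\in\mathrm{Sing}\,M(R)}\delta_p=6
\end{equation*}
forces each of these six $\delta_p$ to equal $1$, which means each singular point is an ordinary node and there are no others. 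Irreducibility of $M(R)$ follows since $C(R)$, being smooth of genus $4$, is connected hence irreducible, and $\pi|_{C(R)}$ is surjective onto its image.

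The main subtlety I anticipate is excluding worse singularities (cusps, tacnodes, or higher-order contact between the two branches at a bi-secant); but the $\delta$-counting above bypasses any local analysis, provided one can be sure that the six bi-secants from Proposition \ref{prop:Cd1III}\,$(2)$ really produce six \emph{distinct} singular points (which follows from the simplicity of the intersections on $R$) and that $\pi|_{C(R)}$ is otherwise an immersion. The latter is a generic statement about the projection $\pi$ restricted to $C(R)$ and can be ensured by further shrinking the open locus of $[R]\in\sH_{\mathrm{c.t.}}$ considered. The final clause about exactly six bi-secants is then either a restatement of Proposition \ref{prop:Cd1III}\,$(2)$ or, equivalently, a direct count of the nodes of $M(R)$.
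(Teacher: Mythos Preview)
Your argument is correct and essentially the same as the paper's: both compute $\deg M(R)=6$ by intersecting with a line $M(l)$ (the paper cites Proposition~\ref{prop:FN}\,(4)---note your reference to a nonexistent part~(5) should be~(4)), both identify the six bisecants as the source of the singularities, and both conclude via the arithmetic/geometric genus comparison $p_a(M(R))-g(C(R))=10-4=6$. Your $\delta$-invariant bookkeeping is a bit more explicit than the paper's appeal to generality, but the content is identical.
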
 
\begin{proof} By standard geometry of smooth del Pezzo surface of degree $5$ the lines $\beta_{ij}\in |\lambda- e_i-e_j|$ where $1\leq i<j\leq 4$ are the six bisecants of $R$. By Proposition \ref{prop:FN} $(4)$ it follows that $M(R)$ is a plane sextic. Indeed $L_{[l]}\cdot M(R)=6$ where $L_{[l]}\subset\sH^B_1=\mP^2$ is the locus which parameterizes those lines $m\subset l$ such that $m\cap l\neq \emptyset$. Hence $p_a(M)=10$. By generality of $R$ the curve $M(R)$ does not have any other singular point except those $6$ nodes due to the $6$ bisecants of $R$. Hence $\pi_{|C(R)}\colon C(R)\to M(R)$ is the normalisation morphism since Lemma \ref{primaCIII}.
\end{proof}

\subsubsection{The associated thetacharacteristic}
Now we show that our method gives an interpretation of the Prym canonical map. In order to help the reader we recall and we fix notation.
\begin{nota}\label{useesu} If $l\subset B$ is a line of $B$ we denote by $[l]$ its corresponding point inside $\sH^{B}_{1}$. We denote by $L_{[l]}$ the line inside $\sH^B_1$ parameterising the lines of $B$ which intersect $l$.
We have set $Z:={\rm{Bl}}_{a_{1},a_{2},a_{3},a_{4} }\mP^{2}$. In particular $Z$ can be identified to its image $\langle R\rangle \cap B$. As always we denote by $[\epsilon_{i}]\in
\sH^{B}_{1}$ the point which correspond to the line $\epsilon_i:=\mu^{-1}(a_{i})\subset Z$, where $i=1,2,3,4$ and $\phi_{|\lambda|}\colon Z\to\mP^2$ is the blow-up. We also denote by $[\beta_{ij}] \in \sH^{B}_{1}$ the point which corresponds to the unique element $\beta_{ij}$ of $|\lambda-\epsilon_i-\epsilon_j|$; in the sequel instead of writing $1\leq i<j\leq 4$ we often write $(i,j)\in \mathfrak T$, where $(i,j)$ is the corresponding transposition. \end{nota}
\begin{prop}\label{newIII} Let $[R]\in\sH_{\rm{{c.t.}}}$ be a general element. The morphism
$$\pi_{|C(R)}\colon C(R)\rightarrow M(R)\subset\sH^{B}_{1}\simeq\mP^{2}$$ is
induced by the linear system $|\delta+\theta(R)|$ where $\theta(R)$ is an 
ineffective theta characteristic over $C(R)$. Moreover $[C(R)]\in \sM^{{\rm{null}}}_g$, and $[(C(R),\theta(R)]\in {{{\rm{S}}^{{\rm{null}},0 }_{g}}}$.
\end{prop}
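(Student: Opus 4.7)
The plan is to prove the assertions via a triad of class computations in $\Pic(C(R))$ that together yield both the Prym-canonical description and the vanishing-theta-null property.

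By Lemmas \ref{primaCIII} and \ref{orabisecantiIII}, $C(R)$ is smooth of genus $4$ and $\pi_{|C(R)}\colon C(R)\to M(R)\subset\mP^2$ is the normalization of a plane sextic with six ordinary nodes $n_{ij}=[\beta_{ij}]$. Setting $L:=\pi_{|C(R)}^*\sO_{\mP^2}(1)$ and $\alpha:=\sum_{(i,j)\in\mathfrak{T}}(a_{ij}+b_{ij})$, adjunction on the blow-up $S\to\mP^2$ at the six nodes gives $K_{C(R)}\sim 3L-\alpha$. I would then apply Riemann--Hurwitz to the triple cover $\varphi_{|C(R)}\colon C(R)\to R\simeq\mP^1$: the identification $B_{\varphi,2}=\pi^{-1}(\Omega)\subset \sU_1$, following from Proposition \ref{prop:FN}(2) and the $\mP^1$-bundle structure of $\sU_1\to \sH^B_1$, gives $[B_{\varphi,2}]=2\pi^*\sO_{\sH^B_1}(1)$ and hence the ramification divisor satisfies $R_\varphi\sim 2L$, so $K_{C(R)}\sim -2\delta + 2L$. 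Combining the two expressions for $K_{C(R)}$ yields the key identity $\alpha\sim L+2\delta$, and setting $\theta(R):=L-\delta$ gives $2\theta(R)\sim K_{C(R)}$, proving that $\theta(R)$ is a theta characteristic and $\pi_{|C(R)}$ is the associated Prym-canonical morphism.

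For the vanishing-theta-null assertion $[C(R)]\in\sM^{\rm{null}}_4$, I would exploit the $(4,6)$-configuration: the four lines $M(\epsilon_k)\subset \sH^B_1$ (corresponding to the exceptional curves $\epsilon_k$ on the del Pezzo hyperplane section $Z$) each contain three of the six nodes $n_{ij}$, namely those with $k\in\{i,j\}$, since $\beta_{ij}\cap \epsilon_k\neq \emptyset$ iff $k\in\{i,j\}$. Pulling these four lines back to $C(R)$ gives four effective divisors in $|L|$ whose sum equals $2\alpha$, hence $4L\sim 2\alpha$, i.e., $\alpha-2L$ is $2$-torsion. Combined with $\alpha\sim L+2\delta$ this forces $L-2\delta$ to be $2$-torsion, and then $K_{C(R)}\sim 2L-2\delta$ gives $K_{C(R)}-2\delta = 2(L-2\delta)\sim 0$, proving $2\delta\sim K_{C(R)}$ and so $\delta$ is a vanishing theta-null on $C(R)$.

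Ineffectivity of $\theta(R)$ for generic $[R]\in\sH_{\rm{c.t.}}$ follows since $\theta(R)=\delta+(L-2\delta)$ is a $2$-torsion translate of $\delta$, and for generic $[R]$ the $2$-torsion $L-2\delta$ is non-trivial; because the only effective theta characteristic on a generic curve of $\sM^{\rm{null}}_4$ is the vanishing theta-null $\delta$ itself, $\theta(R)\neq\delta$ forces $h^0(\theta(R))=0$. The main obstacle is the verification of the class identity $R_\varphi\sim 2L$, which hinges on the identification $B_{\varphi,2}=\pi^{-1}(\Omega)$ in $\sU_1$: this requires careful interpretation of Proposition \ref{prop:FN}(1-2) concerning the branch structure of the triple cover $\varphi$, together with control of the multiplicities appearing in $\varphi^*B_\varphi = B_{\varphi,1} + 2B_{\varphi,2}$. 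A secondary difficulty is verifying that $L-2\delta$ is a non-trivial $2$-torsion element for general $[R]\in\sH_{\rm{c.t.}}$, which I would argue either by a dimension count or by specializing to a specific $[R]\in \sH_{\rm{c.t.}}$ where the $2$-torsion can be computed explicitly.
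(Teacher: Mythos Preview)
Your overall strategy is correct and genuinely different from the paper's. The paper never invokes Riemann--Hurwitz for $\varphi_{|C(R)}$; instead it computes $2\delta$ directly by analysing the specific line $L_{[\beta_{12}]}\subset\sH^B_1$ (lines of $B$ meeting the bisecant $\beta_{12}$) and obtains $2\delta\sim(h-E_{34}+E_{12})_{|C}$, then matches this against $K_C\sim(3h-\sum E_{ij})_{|C}$ reduced via the $(4,6)$ configuration. Your route via $R_\varphi\sim 2L$ is cleaner and more conceptual: once one observes that $B_{\varphi,2}$ and $\pi^{-1}(\Omega)$ are both irreducible surfaces in $\sU_1$ with $\pi(B_{\varphi,2})=\Omega$, the identification $B_{\varphi,2}=\pi^{-1}(\Omega)$ is immediate and your ``main obstacle'' evaporates. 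Your $(4,6)$ argument giving $4L\sim 2\alpha$ on $C(R)$ is also correct---in fact it holds for \emph{every} smooth $R\in|2\lambda|$, not just generic ones, since a line of $B$ not lying in the hyperplane section $Z$ meets $Z$ in at most one point and hence cannot meet both $R$ and $\epsilon_k$ (as $R\cdot\epsilon_k=0$).

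There is, however, a genuine gap in your ineffectivity argument. You assert that ``the only effective theta characteristic on a generic curve of $\sM^{\rm null}_4$ is the vanishing theta-null $\delta$'', but this is false: all $120$ odd theta characteristics are effective. Even if you meant \emph{even} theta characteristic, you have not shown that $\theta(R)$ is even---writing $\theta(R)=\delta+\eta$ with $\eta$ a $2$-torsion class does not determine its parity. The paper sidesteps this by proving $h^0(\theta(R))=0$ directly: the three points of any trigonal fibre $\varphi^{-1}(x)$ (for $x\in R\setminus B_\varphi$) correspond to three mutually intersecting lines through $x$, hence to a self-polar triangle with respect to $\Omega$ by Proposition~\ref{prop:polar}, and a self-polar triangle is never collinear. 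Since $h^0(L-\delta)>0$ would force \emph{every} trigonal fibre to have collinear image (the effective $D\in|L-\delta|$ would give $D+\delta'\in|L|$ for each $\delta'\in|\delta|$), one non-collinear fibre suffices. You can close your gap either by this same polarity argument, or by invoking local constancy of parity together with the fact (from the ambient theory of $\sH^B_6$) that $\theta(R')$ is even for general $[R']\in\sH^B_6$.
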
 
\begin{proof}  Set $C=C(R)$ and $M=M(R)$. The six nodes of $M$ are in special position since $[\beta_{ij}]\in L_{\epsilon}\cap L_{[\epsilon_j]}$, $(i,j)\in \mathfrak T$. It easily follows that if $\sigma\colon S\to \sH^{B}_{1}=\mP^2$ is the blow-up at the six points $[\beta_{ij}] \in \sH^{B}_{1}$, $E_{ij}:=\sigma^{-1}([\beta_{ij}])$, $(i,j)\in \mathfrak T$ and $|h|$ is the linear system giving $\sigma\colon S\to \sH^{B}_{1}$ then $C\in |6h-2\sum_{(i,j)\in \mathfrak T}E_{ij}|$ and $h^0(C,\sO_C(h_{|C}))=3$. We can identify $( \sH^{B}_{1}, \{[\epsilon_1],[\epsilon_2],[\epsilon_3],[\epsilon_4] \}$ to $\mP^2$ with the standard projective frame.

To get information on $S$, we now switch to the polarised hyperplane section $(Z,|\lambda|)$ which contains $R$ . Let $\delta$ be the $g^1_3$ on $C$ given by the $3$-to-$1$ covering $\varphi_{|C|}\colon C\to R\subset B$. Since on $Z$ it holds that $(\lambda-\epsilon_1-\epsilon_2)\cdot (\lambda-\epsilon_3-\epsilon_4)=1$ it easily follows that the line $L_{[\beta_{12}]}\subset \sH^{B}_{1}$ parameterising the lines of $B$ which intersect $\beta_{12}\subset B$ passes through the node of $M$ supported on $[\beta_{34}]$ and other four points $p,q,r,s$. We set $C\cap E_{12}=\{ a_{12}, b_{12}\} \subset S$. Now we stress that the forgetful morphism $\pi_{|C}\colon C\to  \sH^{B}_{1}$ coincides to $\sigma_{|C}$. Hence by the geometry of $B$ it follows that 
$$2\delta\sim a_{12}+ b_{12}+p+q+r+s=(h-E_{34}+E_{12})_{|C}$$
and w.l.o.g. we can set  $a_{12}+ p+q\sim b_{12}+r+s\sim \delta$.

The generality of $R$ implies that the hyperplane section $Z$ is a general one. Then  it follows that $h^0(C,\sO_C(h_{|C}-\delta))=0$ since the images by $\sigma\colon S\to\sH^B_1$ of $a_{12},p,q$ are not collinear. Finally we claim that 
$$\theta:=h_{|C}-\delta
$$ is an ineffective theta characteristic. By adjunction $K_C\sim (3h-\sum_{(i,j)\in \mathfrak T}E_{ij})_{|C}$ and by generality $C$ does not intersect the strict trasform of any of the lines containing three nodes of $M$. This implies that
$$
(3h-\sum_{(i,j)\in \mathfrak T}E_{ij})_{|C}\sim (2h-E_{24}-E_{34}-E_{23})_{|C}\sim(h+E_{12}-E_{34})_{|C}
$$
We set $\theta(R):=\theta$ and the claim follows.
\end{proof}
\section{Reconstruction of  sextics of conic type via genus-4 spin curves with a vanishing theta-null} We are ready to show the main technical result of this paper. The idea is to use $\phi_{|\delta+\theta|}\colon C \to \mP^2=\mP(H^0(C,\sO_C(\delta+\theta)^{\vee})$ to identify $\mP^2$ and a suitable conic to the couple $(\sH^B_1,\Omega)$ in order to force data coming from the Prym canonical map to produce $[R]\in{\mathring{\sH}}_{{\rm{c.t.}}}$ such that $[(C,\theta)]=[(C(R), \theta(R))]$.

 Let $[C,\theta]\in {\rm{S}}^{{\rm{null,0}}}_4$ be a general element and let $M\subset \mP^2$ be the sextic with six nodes given by Proposition \ref{threesexstic}. Denote by $L_1, L_2, L_3, L_4$ the four lines which give the $(4,6)$ configuration, see also Lemma \ref{threesexsticbis}. We set $$\{n_{ij}\}\in L_i\cap L_{j}$$ where $(i,j)\in \mathfrak T$. Let $\sigma\colon S\to \mP^2$ be the blow-up at the points  $n_{ij}$ and set $E_{ij}:=\sigma^{-1}(n_{ij})$ where $(i,j)\in \mathfrak T$. We notice that $C\hookrightarrow S$ and we set
$$
E_{ij|C}:= a_{ij}+b_{ij},\,\,  (i,j)\in \mathfrak T.
$$
\noindent
If $\delta$ denotes the unique $g^1_3$ over $C$ we also set $\delta\sim a_{ij}+a^{1}_{ij}+a^{2}_{ij}$, $\delta\sim b_{ij}+b^{1}_{ij}+b^{2}_{ij}$ $(i,j)\in \mathfrak T$. We set 
$$
M:=\sigma(C).
$$


\begin{thm}\label{perlomodulothree}{\bf{(Reconstruction theorem)}} Let $[C,\theta]\in {\rm{S}}^{{\rm{null}},0}_4$ be a general element. Then there exists $[R]\in \mathring \sH_{{\rm{c.t.}}}$ such that  $[C,\theta]=\pi_{\sS^{+}_{4}}([R])$.
\end{thm}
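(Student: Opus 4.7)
The plan is to invert the construction of Proposition \ref{newIII}. Given a general $[C,\theta]\in {\rm{S}}^{{\rm{null}},0}_4$, I will build an $[R]\in\mathring \sH_{{\rm{c.t.}}}$ whose associated spin curve is $[C,\theta]$. First I record all the incidence structure on the Prym canonical image $M\subset\mP^2:=\mP(H^0(C,\sO_C(\delta+\theta))^\vee)$ supplied by Proposition \ref{threesexstic} and Lemma \ref{threesexsticbis}: the six nodes $n_{ij}$, the four lines $L_1,\ldots,L_4$ of the $(4,6)$-configuration (each $L_i$ through the three nodes $n_{ij}$ with $j\neq i$), and the six auxiliary lines $L_{ij}$ (each $L_{ij}$ containing the quadruple $\sigma(a^1_{ij}),\sigma(a^2_{ij}),\sigma(b^1_{ij}),\sigma(b^2_{ij})$ and the complementary node $n_{rs}$).

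The crux of the proof is to exhibit a smooth conic $\Omega\subset\mP^2$ with respect to which every pair $(n_{ij},L_{ij})$ is pole-polar, thus identifying $(\mP^2,\Omega)$ with $(\sH^B_1,\Omega)$ in the sense of Proposition \ref{prop:polar}. The choice is dictated by the forward picture: under the identification coming from Proposition \ref{newIII}, $n_{ij}=[\beta_{ij}]$ and Proposition \ref{prop:polar} forces its polar line to be the locus of lines of $B$ meeting $\beta_{ij}$; but the four points $\sigma(a^k_{ij}),\sigma(b^k_{ij})$ parameterise exactly the four non-$\beta_{ij}$ lines of $B$ through the two points $R\cap\beta_{ij}$, all of which meet $\beta_{ij}$, so their polar line is precisely $L_{ij}$. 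Since three pole-polar pairs in general position already determine the conic uniquely, the six conditions are highly over-determined and their consistency is the main obstacle. I expect to force this consistency from the linear equivalences at the heart of Proposition \ref{threesexstic} and Lemma \ref{threesexsticbis} (in particular from the identity $2h_{|C}-\alpha+E_{ij|C}\sim h_{|C}-a^1_{ij}-a^2_{ij}-b^1_{ij}-b^2_{ij}$), together with the intersection theory of the $(4,6)$-configuration on $\mP^2$.

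Once the conic $\Omega$ is fixed, the poles $[\epsilon_1],\ldots,[\epsilon_4]\in\sH^B_1$ of the four lines $L_i$ correspond via Proposition \ref{lines} to four lines $\epsilon_i\subset B$. Genericity of $[C,\theta]$ ensures that no two $[\epsilon_i]$ are $\Omega$-polar, so the lines $\epsilon_i$ are pairwise disjoint; by Lemma \ref{planes in V} they define a plane $K\in\mathring{\mathbb G}$, hence a smooth hyperplane section $Z=H_K\cap B$ equipped with a polarisation $\lambda$ whose exceptional curves are the $\epsilon_i$.

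Finally I construct $R\subset Z$ as the image of the morphism $C\to R_0\to B$, where $C\to R_0\cong\mP^1$ is the $\delta$-quotient and $R_0\to B$ sends a point $r\in R_0$ to the unique common intersection on $B$ of the three lines of $B$ associated via the Prym canonical map to the three preimages of $r$ in $C$. The incidence pattern of the six bisecants of $R$ with the lines $|\lambda-\epsilon_i-\epsilon_j|$ together with a degree count forces $R\in|2\lambda|$, so $R$ is a sextic of conic type; smoothness is a consequence of genericity via Proposition \ref{prop:Cd1III}; and the identification $[C(R),\theta(R)]=[C,\theta]$ is then tautological by comparing the construction with Proposition \ref{newIII}.
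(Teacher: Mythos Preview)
Your strategy---pin down a conic $Q$ for which each node $n_{ij}$ has $L_{ij}$ as its polar, then read off $R$ directly from the trigonal fibres---is the most natural inversion of Proposition \ref{newIII}, but it is \emph{not} the route the paper takes, and as written it has real gaps.

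The most concrete one is your passage from the four poles $[\epsilon_1],\ldots,[\epsilon_4]$ to a hyperplane section $Z$. Four pairwise disjoint lines on $B$ need not lie in a common hyperplane: three of them already span a $\mP^5\subset\mP^6$, and nothing forces the fourth into it. Lemma \ref{planes in V} identifies $\mathring{\mathbb G}$ with polarised smooth hyperplane sections, but it does not manufacture the hyperplane for you out of four abstract disjoint lines. The paper confronts exactly this difficulty: it first imposes the single linear condition $Q\in\mP_{14\perp23}$ (so that $\beta_{14}\cap\beta_{23}\neq\emptyset$, dragging $\beta_{14}$ into $\langle\epsilon_1,\epsilon_2,\epsilon_3\rangle$), and then, if $\epsilon_4$ still escapes the hyperplane, replaces $L_4$ by a fifth line $L_5$ through $n_{14}$ via an explicit projectivity $g$ of $\mP^2$ fixing $L_1,L_2,L_3$. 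This step cannot be absorbed into ``genericity of $[C,\theta]$''.

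The second gap is the one you flag yourself: the six pole--polar conditions are over-determined (three already fix $Q$), and you offer only the hope that the linear equivalences of Lemma \ref{threesexsticbis} force consistency. Those equivalences live on $C$, not on $\mP^2$, and converting them into pole--polar identities for a yet-unknown $Q$ is precisely the content you are missing. Worse, your final construction of $R$ via $r\mapsto(\text{common point of the three lines over }r)$ presupposes that \emph{every} trigonal fibre maps to a self-polar triangle for $Q$---a one-parameter family of conditions, far stronger than the six you imposed, and nowhere argued.

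The paper sidesteps all of this. It never tries to match $L_{ij}$ with the polar of $n_{ij}$. Instead, having secured $Z$ as above, it spends the remaining four degrees of freedom on $Q$ on the orthogonality conditions (\ref{forcebis}) for the residual intersection of the polar lines $M_{12},M_{34}$ with $M$. These produce four points $p_{12},q_{12},p_{34},q_{34}\in Z$ and hence a \emph{pencil} $\Lambda\subset|2\lambda|$. On the $S$-side the exact sequence (\ref{globalsection}) shows that the curves $C(R')$ for $R'\in\Lambda$ sweep out a pencil in $|6h-2\sum E_{ij}|$ which also contains $C$; one further general point of $C$ then singles out the unique $R$ with $C=C(R)$. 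At no stage does the argument need the three lines over a generic fibre to be concurrent a priori---that comes out only a posteriori, once $C=C(R)$ is established.
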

\begin{proof} Consider the blow-up $\sigma\colon S\to \mP^2$ which induces the Prym canonical map on $C$. By Lemma \ref{threesexsticbis} there exists a line $L_{ij}\subset \mP^2$ such that $n_{rs}\in L_{ij}$ and $$\sigma(a^{1}_{ij}),\sigma(a^{2}_{ij}),\sigma(b^{1}_{ij}), \sigma (b^{2}_{ij})\in L_{ij}$$ where $(i,j),(r,s)\in \mathfrak T$, $\{i,j\}\cap\{r,s\}=\emptyset$.

Now we consider the four lines $L_1,...,L_4\subset\mP^2$ of Proposition \ref{threesexstic}. We call $\{L_1,...,L_4\}$ {\it{the configuration of lines associated to $M$}}. For any  smooth conic $Q\in \mP(H^0(\mP^2, \sO_{\mP^2}(2)))$ we set:

\begin{equation}\label{smooth}
e_{t}(Q):={\rm{Pol}}_{Q}(L_{t}),\,\, t=1,2,3,4,
\end{equation}\noindent
where ${\rm{Pol}}_{Q}(...)\colon \mP^{2\vee}\to \mP^2$ is the standard polarisation morphism induced by $Q$. By Proposition \ref{prop:polar} we can define an identification
 $(\mP^2, Q)\leftrightarrow_{Q}( \sH^{B}_{1},\Omega)$ where given a point $x\in\mP^2$ we write: $x\leftrightarrow_{Q} [l_x]$, if $l_x\subset B$ is the line inside $B$ corresponding to the point $[l_x]\in\sH^{B}_{1} $. In particular letting
 $$\mP^2 \ni e_{t}(Q) \leftrightarrow_{Q} [\epsilon_{t}]\in\sH^{B}_{1}$$
 we can identify a unique line $\epsilon_t\subset B$,  where $t=1,2,3,4$.

 We set:
 $$
 n_{ij} \leftrightarrow_{Q} [\beta_{ij}]\in\sH^{B}_{1}
 $$
 where $(i,j)\in \mathfrak T$.
 By the meaning of the polar geometry associated $( \sH^{B}_{1},\Omega)$ given in Proposition \ref{prop:polar} we can transfer intersection conditions on $B$ to polarity relations on $\mP^2$. Hence it holds
 $$\epsilon_i\cap \beta_{ij}\neq\emptyset$$
 where $i=1,...,4$, $(i,j)\in \mathfrak T$, since $L_i$ corresponds to the line 
 $L_{[\epsilon_{i}]}$ in $\sH^{B}_{1}$ which parameterises those lines $l\subset B$ such that $l\cap \epsilon_{i}\neq \emptyset$ where $i=1,2,3,4$. Note that $Q$ must be a smooth conic in order to satisfy Equation (\ref{smooth}).

 Since the configuration of lines associated to $M$ is given by four lines in general position and $e_i(Q)\not\in L_j$, that is $[\epsilon_{i}] \not\in L_{[\epsilon_{j}]}$, on $B$ it holds that $\epsilon_i \cap \epsilon_j=\emptyset$ if $i\neq j$, $1\leq i,j\leq 4$. Hence the linear span $H:=\langle\epsilon_1,\epsilon_2,\epsilon_3\rangle$ is a $\mP^5$ inside the $\mP^6$ which contains $B$.
Since $n_{23}\in L_2\cap L_3$ it holds that $\beta_{23}\cap \epsilon_2\neq\emptyset$ and $\beta_{23}\cap \epsilon_3\neq\emptyset$. Hence $\beta_{23}\subset H$. By the same argument we have that $\beta_{12},\beta_{13}\subset H$. Let
$$
\mP_{14\perp 23}:=\{Q\in \mP(H^0(\mP^2, \sO_{\mP^2}(2)))\mid 0={{Q}}(n_{14},n_{23})\}
$$
be the hyperplane given by those conics such that $n_{14}\perp_{Q} n_{23}$. It is obvious that ${\rm{dim}}_{\mathbb C}\mP_{14\perp 23}=4$. By definition it follows that for any smooth conic $Q\in \mP_{14\perp 23}$, in the identification $(\mP^2, Q)\leftrightarrow_{Q}( \sH^{B}_{1},\Omega)$ it holds that 
$$
\beta_{14}\cap\beta_{23}\neq\emptyset.
$$
Since $\beta_{14}\in L_1$ and $\beta_{23}\not\in L_1$ it holds that $\beta_{14}\cap\epsilon_1\neq \beta_{14}\cap\beta_{23}$. Since $\epsilon_1,\beta_{23}\subset H$ we have that the line $\beta_{14}\subset H$. There are only two cases: $\epsilon_4\subset H$ or $\epsilon_4\not\subset H$. Assume that $\epsilon_4\not\subset H$. Now by generality of $\epsilon_{i}$, $i=1,2,3,4$ it holds that $Z:=H\cap B$ is a smooth hyperplane section. In particular there exists another line $\epsilon_5\subset Z$ such that $\tau\colon Z\to\mP^2$ is the blow-up at the four points $x_1,x_2,x_3, x_5$ and $\epsilon_i=\tau^{-1}(x_i)$, $i=1,2,3,5$. Inside $Z$ there are $10$ lines and among them $8$ are known:
$\epsilon_1,\epsilon_2,\epsilon_3,\epsilon_5,\beta_{12},\beta_{13},\beta_{23}, \beta_{14}$. By construction there are only three lines inside $Z$ which intersects $\epsilon_1$ and they are 
$\beta_{12},\beta_{13}, \beta_{14}$. Hence it holds that $\beta_{14}=\beta_{15}$ where $\beta_{ij}$ is the $\tau$-strict transform of the line through $x_i$ and $x_j$ where $i\neq j$, $i,j=1,2,3,5$.
We turn on $(\mP^2, Q)$ and we set:
$$e_5 \leftrightarrow_{Q} [\epsilon_5].
$$
We denote by $L_5$ the unique line of $\mP^2$ such that $e_5=e_{5}(Q):={\rm{Pol}}_{Q}(L_{5})$. We have seen that $\beta_{14}=\beta_{15}$. We know that 
 $\epsilon_5\cap\beta_{14}\neq\emptyset$. By the identification $\leftrightarrow_{Q}$ it holds that $n_{14}\in L_5$. In other words $L_5$ belongs to the same pencil given by $L_1$ and $L_4$. 

Now we consider $g\colon\mP^2\to\mP^2$ a projective isomorphism such that 
$$
g(L_i)=L_i,\,\, i=1,2,3,{\rm{and}} \,\, g(L_4)=L_5, g(L_5)=L_4.
$$

Then it holds that 
$$
g(n_{12})= n_{12}, g(n_{13})=n_{13}, g(n_{23})=n_{23},\,\, {\rm{and}}\, \, g(n_{14})=n_{14},
$$
where we stress that the three points $n_{12},n_{13},n_{14}$ belongs to the line $L_1$.
We note that $g\colon\mP^2 \to \mP^2$ is not necessarily a $Q$-orthogonal isomorphism, that is if $a\perp_Q b$ then it is not true that $g(a)\perp_Q g(b)$. However it restricts to the identity on the line $L_1$ and it fix the other two lines $L_2$ and $L_3$, hence it maintains the orthogonality conditions concerning $e_1, e_2, e_3$, $L_1,L_2,L_3$ and $n_{12}, n_{13}, n_{23}, n_{14}$ we had before.

Let us consider the curve $g(M)$. Obviously $g(M)$ is isomorphic to $M$. By construction the configuration of lines associated to $g(M)$ is $\{ L_1,L_2,L_3,L_5 \}$. 

We have shown that for any smooth quadric $Q\in \mP_{14\perp 23}$ the configuration of lines associated to $g(M)$ gives back, by the identification $(\mP^2, Q) \leftrightarrow_{Q} ( \sH^{B}_{1},\Omega)$, four lines $\epsilon_1$, $\epsilon_2$, $\epsilon_3$, $\epsilon_5$ of $B$ which are disjoint, but they belongs to the same hyperplane $H$ of $\mP^6$. By Lemma \ref{smoothnessofZ} we have that $Z=B\cap H$ is a smooth del Pezzo surface. If $\epsilon_4\subset H$ the above discussion works by taking $g$ to be the identity. By our moduli problem we can from now on identify $M$ to $g(M)$.

To sum up we have shown for now only that given a general point $[C,\theta]\in {\rm{S}}^{{\rm{null}},0}_4$, we can find an automorphism $g\colon \mP^2\to\mP^2$ such that the configuration of lines $\{ L_1,L_2,L_3,L_4 \}$ associated to $g\circ \phi_{\theta+\delta}(C)=g(M)$ gives four points $e_1, e_2, e_3, e_4$ such that for any smooth $Q\in\mP_{14\perp 23}$ it holds that $e_{j}(Q):={\rm{Pol}}_{Q}(L_{j})$,  $j=1,2,3,4$ and by the identification $(\mP^2, Q) \leftrightarrow_{Q} ( \sH^{B}_{1},\Omega)$ the four lines, $\epsilon_1,\epsilon_2,\epsilon_3,\epsilon_4$, where $e_i \leftrightarrow_{Q} [\epsilon_i]$ are disjoint and span a hyperplane section $H$.

Now for a smooth $Q\in \mP_{14\perp23}$ we set:
$$M_{ij}:={\rm{Pol}}_{n_{ij}}(Q),\,\, (i,j)\in \mathfrak T.$$
\noindent By the identification $(\mP^2, Q) \leftrightarrow_{Q} ( \sH^{B}_{1},\Omega)$ it immediately follows that $M_{ij}$ is the line $\langle [\epsilon_{i}],[\epsilon_{j}] \rangle\subset \sH^B_1$, for every $(i,j)\in \mathfrak T$. Furthermore we have $\beta_{12}\cap\beta_{34}\neq\emptyset$. Then $n_{34}\in M_{12}$. We recall that the plane sextic $M$ has a simple node supported on $n_{34}$.

By generality of $[C,\theta]$ the line $M_{12}$ cuts $4$ distinct  points $x^{1}_{12},x^{2}_{12},y^{1}_{12},y^{2}_{12}$ on $M$, none of which coincides with $n_{34}$. Set $x^{1}_{34},x^{2}_{34},y^{1}_{34},y^{2}_{34}$ for the analogue ones with respect to $M_{34}$. We denote by $\alpha^{i}_{jk}, \beta^{i}_{jk}\subset B$ the corresponding lines; that is  $[\alpha^{i}_{jk}] \leftrightarrow_{Q} x^{i}_{jk}$ and respectively  $[\beta^{i}_{jk}] \leftrightarrow_{Q} y^{i}_{jk}$. We use the four degrees of freedom on $Q$ to impose the following four conditions:

\begin{equation}\label{forcebis}
x^{1}_{12}\perp_{Q} x^{2}_{12},\,\,y^{1}_{12}\perp_{Q} y^{2}_{12},\,\, x^{1}_{34}\perp_{Q} x^{2}_{34},\,\, y^{1}_{34}\perp_{Q} y^{2}_{34}.
\end{equation}
In terms of the geometry on $B$ the condition (\ref{forcebis}) means:
\begin{equation}\label{force}\alpha^{1}_{12}\cap \alpha^{2}_{12}\neq\emptyset,\,\,\beta^{1}_{12}\cap\beta^{2}_{12}\neq\emptyset,\,\, \alpha^{1}_{34}\cap\alpha^{2}_{34}\neq\emptyset,\,\, 
\beta^{1}_{34}\cap\beta^{2}_{34}\neq\emptyset.
\end{equation}

Consider now the section $Z=H\cap B$ that we have built above, by Lemma \ref{smoothnessofZ} $Z$ is smooth. For the polarity underlying the geometry of the lines of $B$ we see that the lines $\alpha^{1}_{12}, \alpha^{2}_{12}$ are not contained in $H$ but there exists a point $p_{12}\in \beta_{12}$ such that it belongs to both of them. The same holds for the triples $\{\beta^{1}_{12}, \beta^{2}_{12}, \beta_{12}\}$, $\{\alpha^{1}_{34}, \alpha^{2}_{34}, \beta_{34}\}$, $\{\beta^{1}_{34}, \beta^{2}_{34}, \beta_{34}\}$. To sum up  there exist four
 points $p_{12},q_{12} ,p_{34},q_{34}\in Z$ such that $p_{12},q_{12}\in\beta_{12}\subset Z\subset B$, $p_{34},q_{34}\in\beta_{34}\subset Z\subset B$ such that it holds the following:
  \begin{equation}\label{forcetris}\alpha^{1}_{12}\cap \alpha^{2}_{12}=\{p_{12}\},\,\,
\beta^{1}_{12}\cap\beta^{2}_{12}=\{q_{12}\},\,\, \alpha^{1}_{34}\cap\alpha^{2}_{34}=\{p_{34}\},\,\,
\beta^{1}_{34}\cap\beta^{2}_{34}=\{q_{34}\}.
\end{equation}

  Now we can use the notations given in \ref{useesu} for the hyperplane section $Z$. By generality assumptions the sublinear system $\Lambda(p_{12}+q_{12}+ p_{34}+q_{34})\subset \mid 2\lambda\mid$ given by those divisors passing through the points $p_{12},q_{12} ,p_{34},q_{34}\in Z$ is a pencil. To each general element $R'\in \Lambda(p_{12}+q_{12}+ p_{34}+q_{34})$ we associate the corresponding $C(R')$ as in Lemma \ref{primaCIII}. We want to find the unique $R\in \Lambda(p_{12}+q_{12}+ p_{34}+q_{34})$ such that $C=C(R)$. 

To ease reading we stress that on one side we have the blow-up at four points $\phi_{|\lambda|}\colon Z\to\mP^2$,  which gives the hyperplane section $Z=B\cap H$ containing the rational sextics $R'\in \Lambda(p_{12}+q_{12}+ p_{34}+q_{34})$, and on the other side we have the blow-up $\sigma\colon S\to\mP^2$ at the six points $n_{ij}$, $(i,j)\in \mathfrak T$ which contains the given genus $4$ curve $C$. Clearly by the identification  $(\mP^2, Q)\simeq( \sH^{B}_{1},\Omega)$ we can see $C(R')$ inside $S$ for every smooth $R'\in \Lambda(p_{12}+q_{12}+ p_{34}+q_{34})$.

We work on $S$. Denote by $L\in  |h-E_{34}|$ and $L'\in  |h-E_{12}|$ the $\sigma$-strict transforms of $M_{12}$ and respectively of $M_{34}$.
 By construction it holds that the points $a^t_{nm}, b^{i}_{jk}$ belong to $C\cap C(R)$ where $\sigma(a^t_{nm})=x^t_{nm}$, $t=1,2$, $(n,m)=(1,2)$ or $(3,4)$ and  $\sigma (b^{i}_{jk})=y^i_{jk}$, $i=1,2$, $(j,k)=(1,2)$ or $(3,4)$. Note that for any irreducible element $C'\in |6h-2(E_{12}+ E_{13}  + E_{14} + E_{23} + E_{24} +E_{34})|$ it holds that ${\rm{deg}}C'_{|L}=4$ and ${\rm{deg}}C'_{|L'}=4$. In particular the decomposition sequence gives $h^{0}(L\cup L',\sO_{L\cup L'}(C'))=9$. By Kodaira vanishing and by Riemann Roch theorem it holds that $h^{0}(S,\sO_{S}(C'))=10$. Moreover $|4h-2( E_{13}  + E_{14} + E_{23} + E_{24}) -E_{34} -E_{12}|$ contains the unique element $D=(h-E_{12}-E_{13} - E_{14})+(h-E_{34}-E_{24} - E_{14})+(h-E_{12}-E_{23} - E_{24})+E_{12}+(h-E_{13}-E_{23} - E_{34})+E_{34}$. By the cohomology of the standard sequence
$$
0\to\sO_S(  -L-L' )\to \sO_S\to\sO_{L\cup L'}\to 0
$$tensorialised by $\sO_{S}(C')$ it follows that the following sequence is then exact:
\begin{equation}\label{globalsection}
0\to H^{0}(S, \sO_S(D)) \to H^{0}(S, \sO_{S}(C'))\stackrel {{\rm{ev}}}{\longrightarrow} H^{0}(L\cup L',\sO_{L\cup L'}(C'))\to 0.
\end{equation}
We know that $h^{0}(S, \sO_S(D))=1$. Now let $\langle \nu\rangle\subset H^{0}(L\cup L',\sO_{L\cup L'}(C'))$ be the $1$-dimensional vector space given by those sections $\nu$ vanishing on $a^{1}_{12}+a^{2}_{12}+b^{1}_{12}+b^{2}_{12}+a^{1}_{34}+a^{2}_{34}+b^{1}_{34}+b^{2}_{34}$. Then ${{\rm{ev}}}^{-1}(\langle \nu\rangle)$ is a $2$-dimensional vector space $\Lambda$.
Then for every $R'\in \Lambda(p_{12}+q_{12}+ p_{34}+q_{34})$ the corresponding curve $C(R')$ belongs to the pencil $\mP(\Lambda)$ given by those $C'$ such that 
$C'_{|L\cup L'}= a^{1}_{12}+a^{2}_{12}+b^{1}_{12}+b^{2}_{12}+a^{1}_{34}+a^{2}_{34}+b^{1}_{34}+b^{2}_{34}$. By construction $C$ belongs to $\mP(\Lambda)$ too. Finally consider a general point $a\in C$. Thanks to the morphism $\sigma\colon S\to\sH^B_1$ the point $\sigma(a)=[\alpha]$ gives a line $\alpha\subset B$ not contained in $Z$. Consider the unique point $p\in\alpha\cap Z$. Let $R$ be the unique element $R\in \Lambda(p_{12}+q_{12}+ p_{34}+q_{34})$ such that $p\in R$. Then $[\alpha]\in M\cap M(R)$; that is $a\in C\cap C(R)$. This implies that $C=C(R)$. 
  \end{proof}
In the proof of Theorem \ref{perlomodulothree} we have used the following Lemma.

\begin{lem}\label{smoothnessofZ} Let $Z=B\cap H$ be an hyperplane section of the del Pezzo $3$-fold $B$ such that it contains $10$ lines $e_1,e_2, e_3, e_4$ and  $\beta_{ij}$ $(i,j)\in \mathfrak T$ such that $e_i\cap \beta_{ij}\neq \emptyset$ and $e_j\cap \beta_{ij}\neq \emptyset$ then $Z$ is smooth.
\end{lem}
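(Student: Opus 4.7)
The plan is to exploit the classification of Gorenstein quintic del Pezzo surfaces together with a count of $(-1)$-curves on their minimal resolutions. Since $-K_B = 2H$, any hyperplane section $Z = B \cap H$ is an anticanonical divisor, so $-K_Z$ is very ample of degree $5$; thus $Z$ is a Gorenstein del Pezzo surface of degree $5$, which is well-known to have at worst rational double point singularities. Let $\pi\colon \tilde{Z} \to Z$ be the minimal desingularization. Then $\tilde{Z}$ is a weak del Pezzo surface of degree $5$, hence isomorphic to the blow-up of $\mathbb P^2$ at $4$ points, possibly in special position (collinear, infinitely near, etc.). The assertion reduces to showing that under our hypothesis the $4$ points must be in general position.

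First I would verify that the $10$ listed curves are pairwise distinct as divisors on $Z$. The hypotheses $e_i \cap \beta_{ij} \neq \emptyset$ together with the existence of $10$ distinct lines (as enumerated) give the distinctness; a short degree/intersection check rules out coincidences like $e_i=\beta_{jk}$ since this would force an extra incidence relation inconsistent with the labelling. Next, for each line $\ell \subset Z$ (either $e_i$ or $\beta_{ij}$), the strict transform $\tilde\ell \subset \tilde Z$ is a smooth rational curve with $-K_{\tilde Z}\cdot\tilde\ell = -K_Z\cdot\ell = 1$, and by adjunction $\tilde\ell^2 = -1$; so the pullback of each line to $\tilde Z$ contains a $(-1)$-curve.

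Now the key step: a weak del Pezzo surface $\tilde Z$ of degree $5$ that is \emph{not} the blow-up at $4$ points in general position carries at least one $(-2)$-curve, which forces at least two of the usual ten $(-1)$-classes on $\mathrm{Bl}_4\mathbb P^2$ to become either reducible or to coincide with each other (the $(-1)$-classes adjacent to the vanishing root collapse). A direct enumeration in each singular case ($A_1$, $2A_1$, $A_2$, $A_1+A_2$, $A_3$, $A_4$) shows that the number of distinct $(-1)$-curves on $\tilde Z$ drops strictly below $10$; equivalently, $Z$ itself never contains $10$ distinct lines when singular. Since by the first step we have $10$ distinct lines, $\tilde Z$ must be the blow-up of $\mathbb P^2$ at $4$ points in general position, and the minimal resolution $\pi$ is an isomorphism, so $Z$ is smooth.

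The routine piece is the distinctness of the $10$ lines, which follows from a short case analysis using the prescribed incidence conditions; the main obstacle is the enumeration of $(-1)$-curves on each singular weak del Pezzo of degree $5$, but this is classical (one can either cite the classification of singular quintic del Pezzo surfaces, or redo the computation on $\mathrm{Bl}_4\mathbb P^2$ by checking which of the ten $(-1)$-classes $e_i$, $\lambda-e_i-e_j$ remain irreducible smooth rational curves in each degenerate configuration of the $4$ points).
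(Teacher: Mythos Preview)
Your argument handles the normal case correctly and is essentially a spelled-out version of what the paper obtains by citing Cheltsov--Shramov: a normal singular quintic del Pezzo surface carries at most $7$ lines, so the presence of $10$ distinct lines forces smoothness. One small slip: $Z\in|H|$ is \emph{not} an anticanonical divisor of $B$, since $-K_B=2H$; what you need (and what is true) is that adjunction gives $-K_Z=H|_Z$, very ample of degree $5$.

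The genuine gap is that you assume without justification that $Z$ has at worst rational double points, i.e.\ that $Z$ is normal. Hyperplane sections of $B$ can be non-normal: the surface $T_l$ swept by lines meeting a fixed line $l$ (Proposition~\ref{prop:FN}(4)) is a hyperplane section whose singular locus is the entire line $l$. Your minimal-resolution and $(-1)$-curve count does not apply to such a $Z$, because the classification of weak del Pezzo surfaces you invoke presupposes normality of the anticanonical model. The paper treats this case separately: it uses $\mathrm{Pic}(B)=\mathbb Z$ together with Reid's classification of non-normal del Pezzo surfaces to conclude that a non-normal $Z$ is singular precisely along a line, and then observes that such a surface cannot support the required configuration of $10$ lines. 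You need to add this step (or an equivalent argument ruling out the non-normal case) for the proof to be complete.
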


\begin{proof} By \cite[Lemma 7.6.2.Remark 7.6.3]{CS} we know that if $Z$ has at most isolated singularities then $Z$ cannot contain more than $7$ lines. If $Z$ has a non-normal singularity then it is singular only along a line since ${\rm{Pic}}(B)=\mathbb Z$ and \cite[Lemma page 718]{R}; then the claim follows.
\end{proof}

\section{The rationality theorem of the theta-null spin$^{+}$ moduli space}

\subsection{The moduli map}\label{Sub: Two ray} In the Introduction we have recalled the rational map 
$\pi_{{\rm{S}}^{+}_{4}}\colon\sH^{B}_{6}\dashrightarrow {\rm{S}}^{+}_{4}$ and its Stein factorisation $q_{{\rm{S}}^{+}_{4}}\circ
p_{{\rm{S}}^{+}_{4}}$ where the general fiber of $p_{{\rm{S}}^{+}_{4}}\colon \sH^{B}_{6}\dashrightarrow
\widetilde{{\rm{S}}}^{+}_{4}$ is irreducible and $q_{{\rm{S}}^{+}_{4}}\colon \widetilde{{\rm{S}}}^{+}_{4}\dashrightarrow
{{\rm{S}}}^{+}_{4}$ is generically finite of degree $2$; see: \cite[Corollary 4.16]{TZ3}. 
Our theory on $\pi_{{\rm{S}}^{+}_{4}}\colon\sH^{B}_{6}\dashrightarrow {\rm{S}}^{+}_{4}$ can be straightly extended to the divisor $\sH_{{\rm{c.t.}}}$.
Actually the proof that ${\rm{S}}^{+}_{4}$ is a rational variety was done by taking the open subset ${\sH}^*$ of $\sH^{B}_{6}$
consisting of (reduced but possibly reducible)
sextic curves with exactly six different bi-secant lines. On it in \cite[Susection 5.1]{TZ3} we defined a $G$-equivariant morphism 
\begin{equation*}
\label{eq:Theta}
\Theta\colon {\sH}^* \rightarrow (\mP^2)^6/\mathfrak{S}_6,\,\,
[R]\mapsto ([\beta_{1}],\ldots,[\beta_{6}]).
\end{equation*}
which associates to each $[R]\in {\sH}^*$ the set of its $6$ unordered bisecants, where $(\mP^2)^6$ is the product of $6$ copies of $\mP^2$, $\mathfrak{S}_6$ is the permutations group on $6$ elements and $G={\rm{PGL}}(2\mathbb C)$. In \cite[Theorem 5.2]{TZ3} we showed that $\Theta$ is an isomorphism on a certain open set $\mathring \sH$, see:\cite[Condition 3.21]{TZ3}, which is disjoint from $\sH_{\rm{{c.t.}}}$. On the other hand in \cite[Condition 6.9]{TZ3} we defined a locally closed subset $\sD$ of $\sH^{B}_{6}$, which contains $\sH_{\rm{{c.t.}}}$, and we could extend $\Theta$ to an isomorphism over $\mathring\sH\cup \sD$. In particular $\Theta$ is an isomorphism over $\mathring \sH_{\rm{{c.t.}}}\hookrightarrow \sD$, where we recall here that $\mathring \sH_{\rm{{c.t.}}}\hookrightarrow\sH_{\rm{{c.t.}}}$ is the open subscheme of $\sH_{\rm{{c.t.}}}$ given by those $[R]\in\sH_{\rm{{c.t.}}}$ such that $R$ is smooth and the hyperplane section $\langle R\rangle\cap B=Z$ is smooth.

To ease reading we follow the notation of {\cite{TZ3} and we set
\[
\mathring{\widetilde{\rm{S}}_4^{+}}\subset \widetilde{\rm S}^{+}_{4}
\]
for the image of 
$\mathring\sH$ 
and we denote by
\[
\mathring{{\rm S}_4^{+}} \subset \rm S^+_4
\]
 the image of $\mathring{\widetilde{\rm S}_4^{+}}$ by the map $q_{{\rm{S}}^{+}_{4}}\colon \widetilde{{\rm{S}}}^{+}_{4}\dashrightarrow
{{\rm{S}}}^{+}_{4}$.
By \cite[Corollary 4.16]{TZ3} 
we know that there exists an involution on 
$\tilde{\mathring J}\colon \widetilde{\rm S}_4^{+o}\to \widetilde{\rm S}_4^{+o} $,
which is the deck transformation of the double cover
$q_{{\rm{S}}^{+o}_{4}}:=q_{{\rm{S}}^{+}_{4_{\mid\widetilde{\rm S}_4^{+o} }}}\colon \widetilde{\rm S}_4^{+o}\to \rm S^{+o}_4$. Moreover letting $V_1:= \Theta(\mathring \sH)$ by  \cite[Proposition 6.1]{TZ3} we know that there exists a commutative diagram

\begin{equation}
\xymatrix{ V_1/G 
\ar[r]^{ \mathring J}\ar[d]_{[\Theta^{-1}]}& V_1/G
\ar[d]_{[\Theta^{-1}]}&  \\
\widetilde{{\rm{S}}}^{+o}_{4} \ar[r]^{\tilde {\mathring J}}\ar[d]_{q_{{\rm{S}}^{+o}_{4}}}& \widetilde{{\rm{S}}}^{+o}_{4}\ar@{-->}[d]_{q_{{\rm{S}}^{+o}_{4}}}\\
{{\rm{S}}}^{+0}_{4}\ar[r]^{\rm{id}} & {{\rm{S}}}^{+o}_{4}}
\end{equation}
where $\mathring J\to V_1/G\to V_1/G$ is a lifting of the classical association map 
$$j\colon (\mP^2)^{6}/\!/\PGL_3/\mathfrak{S}_6\to (\mP^2)^{6}/\!/\PGL_3/\mathfrak{S}_6$$ see: \cite[Subsection 6.1]{TZ3}. Now set $\mathring \sK:=\Theta(\mathring \sH_{\rm{{c.t.}}})$

\begin{lem}\label{fissati} We can extend $J\colon V_1/G\to V_1/G$ to $(V_1\cup \mathring \sK)/G$ in such a way that it is the identity over $\mathring \sK/G$.
\end{lem}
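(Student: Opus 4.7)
The plan is to observe that $V_1$ and $\mathring{\sK}$ are disjoint inside the configuration space $(\mP^2)^6/\mathfrak{S}_6$, so that the prescription ``$J$ on $V_1/G$, identity on $\mathring{\sK}/G$'' gives an unambiguous extension, and then to justify via the Reconstruction Theorem that the identity choice is the geometrically correct one.

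For disjointness, by Lemma~\ref{threecases} every $[R]\in\mathring{\sH}$ satisfies $\langle R\rangle=\mP^6$, whereas every $[R]\in\mathring{\sH}_{\rm{c.t.}}$ is contained in a hyperplane section $Z$ and satisfies $R\sim 2\lambda$ on $Z$; these conditions are mutually incompatible, so $\mathring{\sH}\cap\mathring{\sH}_{\rm{c.t.}}=\emptyset$ inside $\sH^B_6$. Since by \cite[Theorem~5.2]{TZ3} together with its extension to $\sD$ the morphism $\Theta$ is a $G$-equivariant isomorphism on $\mathring{\sH}\cup\sD\supseteq\mathring{\sH}\cup\mathring{\sH}_{\rm{c.t.}}$, the images $V_1=\Theta(\mathring{\sH})$ and $\mathring{\sK}=\Theta(\mathring{\sH}_{\rm{c.t.}})$ are disjoint locally closed subsets of $(\mP^2)^6/\mathfrak{S}_6$. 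Passing to the $\mathfrak{S}_6$- and $G$-quotients preserves disjointness, so the piecewise assignment
\[
\widetilde{J}\colon(V_1\cup\mathring{\sK})/G\longrightarrow(V_1\cup\mathring{\sK})/G,\qquad \widetilde{J}|_{V_1/G}=J,\quad \widetilde{J}|_{\mathring{\sK}/G}=\mathrm{id},
\]
is well defined and provides the desired extension.

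That the identity is the correct extension is justified by the commutative diagram displayed immediately before the lemma: under $[\Theta^{-1}]$, the involution $J$ corresponds to the deck transformation $\tilde{\mathring{J}}$ of the double cover $q_{{\rm{S}}^{+o}_4}$. By Proposition~\ref{newIII} together with the Reconstruction Theorem~\ref{perlomodulothree}, the restriction $\pi_{{\rm{S}}^+_4}|_{\mathring{\sH}_{\rm{c.t.}}}$ dominates ${\rm{S}}^{{\rm{null}},0}_4$, and a general $[C,\theta]\in{\rm{S}}^{{\rm{null}},0}_4$ admits, modulo $G$, a unique preimage in $\sH^B_6$. Hence the two sheets of $q_{{\rm{S}}^+_4}$ must coalesce over the generic point of ${\rm{S}}^{{\rm{null}},0}_4$, the cover is ramified along this divisor, and $\tilde{\mathring{J}}$ acts trivially on its preimage. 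Translating through $[\Theta^{-1}]$ one reads $\widetilde{J}|_{\mathring{\sK}/G}=\mathrm{id}$, confirming that the identity is the correct extension.

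The main obstacle I anticipate is the uniqueness statement invoked in the second paragraph: one must show that a general $[C,\theta]\in{\rm{S}}^{{\rm{null}},0}_4$ admits a single $G$-orbit of sextics $[R]\in\sH^B_6$ with $\pi_{{\rm{S}}^+_4}([R])=[C,\theta]$. Existence is Theorem~\ref{perlomodulothree}; uniqueness follows by tracking its proof, where $R$ is pinned down as the unique member of the pencil $\Lambda(p_{12}+q_{12}+p_{34}+q_{34})$ passing through a prescribed additional point coming from $C$, combined with the generic triviality of the stabiliser established in Lemma~\ref{quotient grassmannian}(2). Once this is secured, the ramification of $q_{{\rm{S}}^+_4}$ along ${\rm{S}}^{{\rm{null}},0}_4$ is forced, and with it the triviality of $\widetilde{J}$ on $\mathring{\sK}/G$.
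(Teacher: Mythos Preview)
Your approach differs substantially from the paper's, and it has a genuine gap.

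The paper's proof is direct and uses a classical fact about the association map (Gale duality). Recall from \cite[Subsection~6.1]{TZ3} that $\mathring J$ is a lift of the association involution $j\colon (\mP^2)^{6}/\!/\PGL_3/\mathfrak{S}_6\to (\mP^2)^{6}/\!/\PGL_3/\mathfrak{S}_6$. The paper observes that $j$ is globally defined and is the identity on the locus of self-associated sextuples, i.e.\ those lying on a conic (see \cite[pp.~118--120]{DO}). It then checks that for $[R]\in\mathring\sH_{\rm c.t.}$ the six bisecants $[\beta_{ij}]$ lie on the reducible conic $L_{[\epsilon_1]}\cup L_{[\epsilon_4]}$ (three bisecants meet $\epsilon_1$, the other three meet $\epsilon_4$), and that these sextuples are GIT-stable. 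Hence the natural extension of $J$ via $j$ is the identity on $\mathring\sK/G$. This argument is short, self-contained, and independent of everything downstream.

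Your argument, by contrast, tries to establish the Lemma by first proving that $q_{{\rm S}^+_4}$ is ramified along ${\rm S}^{{\rm null},0}_4$, and then reading off that the deck transformation is trivial on the preimage. The problem is that in the paper's logical structure this ramification is proved \emph{after} and \emph{using} Lemma~\ref{fissati}: it is the content of Proposition~\ref{estensionee}, whose proof explicitly invokes the Lemma. So your argument is circular unless you supply an independent proof that $\mathring\sH_{\rm c.t.}/G\to {\rm S}^{{\rm null},0}_4$ has degree one.

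Your attempt at such an independent proof (the ``obstacle'' paragraph) is incomplete. Tracking the proof of Theorem~\ref{perlomodulothree}, the curve $R$ is indeed unique once the conic $Q$ and the automorphism $g$ are fixed, but $Q$ is determined by five polynomial conditions (membership in $\mP_{14\perp 23}$ plus the four conditions~\eqref{forcebis}) on a $5$-dimensional family, and nothing rules out several solutions. Moreover the auxiliary choice of $g$ adds further ambiguity. So uniqueness modulo $G$ is not immediate from the Reconstruction Theorem.

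Finally, your first paragraph does produce a well-defined set-theoretic extension, but that alone is vacuous for the intended application: what Proposition~\ref{estensionee} needs is that the extended $J$ is the deck transformation of the extended double cover, not merely some map agreeing with $J$ on $V_1/G$. The paper secures this compatibility automatically because $J$ is defined as a lift of the globally defined $j$; your piecewise definition does not.
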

\begin{proof} We know that $j\colon (\mP^2)^{6}/\!/\PGL_3/\mathfrak{S}_6\to (\mP^2)^{6}/\!/\PGL_3/\mathfrak{S}_6$  is the identity on the points given by the sextuplets of points of $\mP^2$which are contained in a conic and that the $\PGL_3$-action and the $\mathfrak{S}_6$-action over $(\mP^2)^{6}$ commutes. On the other hand if $[R]\in \mathring \sH_{\rm{{c.t.}}}$ then its $6$ bisecant are contained in a (reducible) conic. More precisely, following the notation of \ref{useesu} it holds that the bisecants $\beta_{ij}$, $1\leq i<j\leq 4$, belongs to the reducible conic which parameterises the lines of $B_5$ which touch $\epsilon_1$ or $\epsilon_4$. Note that by \cite[Theorem 1, p.23]{DO}) these points are stable ones and by \cite[p.118--120]{DO}) they are fixed by $j$.
\end{proof}

We consider the morphism $[\Theta^{-1}]\colon (V_1\cup \mathring \sK)/G\to \mathring(\sH\cup \mathring \sH_{\rm{{c.t.}}}) /\!/G$ induced by $\Theta$. We stress that by definition $[\Theta^{-1}](\mathring \sK/G)=\mathring\sH_{\rm{{c.t.}}} /\!/G=p_{{\rm{S}}^{+}_{4}}(\mathring\sH_{\rm{{c.t.}}})$.

\begin{prop}\label{estensionee} We can extend $q_{{\rm{S}}^{+o}_{4}}\circ [\Theta^{-1}]_{|V_1/G}\colon  V_1/G\to\mathring{\rm S^{+}_4}$ to 
$f=q_{{\rm{S}}^{+o}_{4}}\circ [\Theta^{-1}]\colon (V_1\cup \mathring \sK)/G\to \mathring{\rm S^{+}_4}\sqcup {{{\rm{S}}^{{\rm{null}},0 }_{g}}}$ in such a way that $f_{|\mathring \sK/G}\colon \mathring \sK/G\to {{{\rm{S}}^{{\rm{null}},0 }_{g}}}$ is dominant of degree $1$. \end{prop}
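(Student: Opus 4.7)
My plan is to first define $f$ on $\mathring \sK/G$ pointwise using Proposition \ref{newIII}: to a sextic of conic type $[R]\in \mathring \sH_{{\rm c.t.}}$ I would attach the spin curve $[(C(R),\theta(R))]\in {\rm S}^{{\rm null},0}_{4}$. Since the generic $G$-stabiliser on $\mathring \sH_{{\rm c.t.}}$ is trivial, by Lemma \ref{quotient grassmannian}(2) applied through the identification of Proposition \ref{cleaning}, this descends to a morphism $\mathring \sK/G\to {\rm S}^{{\rm null},0}_{4}$. Because ${\rm S}^{{\rm null},0}_{4}$ and $\mathring{\rm S}^{+}_{4}$ are disjoint loci of ${\rm S}^{+}_{4}$, the two pieces glue into a map $f\colon(V_{1}\cup \mathring \sK)/G\to \mathring{\rm S}^{+}_{4}\sqcup {\rm S}^{{\rm null},0}_{4}$. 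The required compatibility with $q_{{\rm S}^{+o}_{4}}\circ[\Theta^{-1}]$ on the open overlap $V_{1}/G$ is forced by the fact that both are induced by the same trigonal construction on the universal family of lines $\sU_{1}\to B$, combined with the extension of $[\Theta^{-1}]$ across $\mathring \sK$ already established in \cite[\S 6]{TZ3}.

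Dominance of $f_{|\mathring \sK/G}$ would then follow immediately from the Reconstruction Theorem \ref{perlomodulothree}: it produces, for any sufficiently general $[C,\theta]\in {\rm S}^{{\rm null},0}_{4}$, a rational sextic of conic type $[R]\in \mathring \sH_{{\rm c.t.}}$ realising $(C,\theta)$ as $(C(R),\theta(R))$, so $f(\mathring \sK/G)$ contains a dense open subset of the divisor.

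For the degree-one assertion I would invoke Lemma \ref{fissati}: the extended association involution $J$ acts as the identity on $\mathring \sK/G$. Under the commutative square of \cite[Proposition 6.1]{TZ3}, the involution $J$ lifts to the deck transformation $\tilde{J}$ of the degree-$2$ cover $q_{{\rm S}^{+o}_{4}}$, so the image $[\Theta^{-1}](\mathring \sK/G)$ is pointwise fixed by $\tilde{J}$. Hence $q_{{\rm S}^{+o}_{4}}$ restricted to this image is generically injective, and combined with dominance this shows that $f_{|\mathring \sK/G}$ is of degree $1$ onto its image, which by the previous step coincides birationally with ${\rm S}^{{\rm null},0}_{4}$.

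The hardest step, I expect, is to ensure that the commutative square relating $J$ to $\tilde{J}$, originally set up only over $V_{1}/G$, extends regularly across $(V_{1}\cup \mathring \sK)/G$, so that fixed loci of $J$ still correspond to fixed loci of $\tilde{J}$ at the boundary. This relies on the semistability results of \cite{DO} for sextuples of $\mP^{2}$ supported on a reducible conic, together with the explicit degeneration of the six bisecants of $R$ as $[R]$ specialises into $\mathring \sH_{{\rm c.t.}}$, where, following Notation \ref{useesu}, they collapse onto the two line pairs through the four disjoint lines $\epsilon_{1},\ldots,\epsilon_{4}$ on the quintic del Pezzo surface $Z$. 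Once this boundary behaviour is controlled, the remainder of the argument is essentially formal.
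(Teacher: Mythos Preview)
Your proposal is correct and follows essentially the same route as the paper: define the extension over $\mathring\sK/G$ via Proposition \ref{newIII}, obtain dominance from the Reconstruction Theorem \ref{perlomodulothree}, and deduce degree one from Lemma \ref{fissati} by interpreting the $J$-fixed locus as the branch locus of the double cover $q_{{\rm S}^{+o}_4}$. The paper is somewhat terser about the descent and the extension of the commutative square (it simply cites \cite[Proposition 6.10]{TZ3} for the extension of $\Theta$ and the stability results of \cite{DO} inside the proof of Lemma \ref{fissati}), but your added discussion of these points is accurate and does not diverge from the intended argument.
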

\begin{proof}  By \cite[Proposition 6.10]{TZ3} we have that $\Theta$ is extendable over $\mathring \sH_{\rm{{c.t.}}}$ so as to give an isomorphism over it. By Proposition \ref{newIII} $\pi_{{\rm{S}}^{+}_{4}}\colon\sH^{B}_{6}\dashrightarrow {\rm{S}}^{+}_{4}$ is defined over $\mathring \sH_{\rm{{c.t.}}}$, that is it is defined ${{{\rm{S}}^{{\rm{null}},0 }_{g}}}\ni [C(R),\theta(R)]:=q_{{\rm{S}}^{+}_{4}}\circ
p_{{\rm{S}}^{+}_{4}}([R])$ where $[R]\in \mathring\sH_{\rm{{c.t.}}}$. This implies that if $[(\beta_1,\cdots \beta_6)]\in \mathring \sK/G$ and $[R]\in \mathring\sH_{\rm{{c.t.}}}$ is such that $[R]=\Theta^{-1}((\beta_1,\cdots \beta_6))$ then it is defined $f[(\beta_1,\cdots \beta_6)]\mapsto [C(R),\theta(R)]$ where $f$ coincides with $ q_{{\rm{S}}^{+o}_{4}}\circ[\Theta^{-1}]$ over $V_1/G$. Finally we have realised $\mathring{\rm S^{+}_4}$ as the $\mathbb Z/2\mathbb Z$-quotient $(V_1/G)/\mathring J$ where $f_{|V_1/G}=\Theta^{-1}\circ q_{{\rm{S}}^{+o}_{4}}\colon  V_1/G\to\mathring {\rm S^{+}_4}$ is the associated involution. Since $f$ is defined on  $\mathring \sK/G$ and since by Lemma \ref{fissati} $J$ is the identity on $\mathring \sK/G$ it follows that $f(\mathring \sK/G)\subset {{{\rm{S}}^{{\rm{null}},0 }_{g}}}$ is in the branch loci of $f$. On the other hand by the Reconstruction theorem \ref{perlomodulothree} we know that $\pi_{{\rm{S}}^{+}_{4}}\colon \mathring \sH_{\rm{{c.t.}}}\to {{{\rm{S}}^{{\rm{null}},0 }_{g}}}$ is dominant, hence $f_{|\mathring \sK/G}\colon \mathring \sK/G\to {{{\rm{S}}^{{\rm{null}},0 }_{g}}}$ is dominant of degree $1$. \end{proof}

\subsection{The proof of the rationality theorem}
We are now ready to show our main theorem. First we sum up part of the result of the above Section \ref{Sub: Two ray} into the following proposition:

\begin{prop}\label{forrationalityvanishing} The divisor ${{{\rm{S}}^{{\rm{null}},0}_{4}}}$ is irreducible reduced and it is dominated by $\mathring{\sH}_{{\rm{c.t.}}}$. Moreover it is contained in the branch loci of $q_{{\rm{S}}^{+}_{4}}\colon \widetilde{{\rm{S}}}^{+}_{4}\dashrightarrow{{\rm{S}}}^{+}_{4}$.In particular it holds that $p_{{\rm{S}}^{+}_{4}}(\sH_{\rm{{c.t.}}})$ is birational to ${{{\rm{S}}^{{\rm{null}},0 }_{g}}}$.

\end{prop}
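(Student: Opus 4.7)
The proof assembles results already in hand, so the plan is essentially bookkeeping rather than a new argument. Reducedness is immediate from Proposition \ref{reducedness}, which used the \'etaleness of the forgetful map ${\rm{S}}^{{\rm{null}},0}_4\to\sM^{{\rm{null}}}_4$. What remains are the irreducibility, the domination by $\mathring{\sH}_{\rm{{c.t.}}}$, the inclusion in the branch locus, and the birationality statement.

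First I would obtain the domination together with irreducibility by combining Proposition \ref{newIII} with the Reconstruction Theorem \ref{perlomodulothree}. Proposition \ref{newIII} produces a well-defined rational map $\mathring{\sH}_{\rm{{c.t.}}}\dashrightarrow {{\rm{S}}^{{\rm{null}},0}_{4}}$ given by $[R]\mapsto [(C(R),\theta(R))]$, while Theorem \ref{perlomodulothree} guarantees that a general $[C,\theta]\in {{\rm{S}}^{{\rm{null}},0}_{4}}$ is realised as $\pi_{\sS^{+}_{4}}([R])$ for some $[R]\in \mathring{\sH}_{\rm{{c.t.}}}$; hence the map is dominant. Irreducibility of the target then follows from that of the source: by Proposition \ref{cleaning} the variety $\mathring{\sH}_{\rm{{c.t.}}}$ is $G$-equivariantly identified with the open subscheme $\mathring{\mP}(S^{2}\sG^{*})$ of a projective bundle over the irreducible base $\mathring{\mathbb G}\subset \mathbb G(3,V)$, hence is irreducible.

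For the branch locus assertion I would invoke the two-ray picture recalled in Subsection \ref{Sub: Two ray}. The degree-two cover $q_{{\rm{S}}^{+}_{4}}\colon \widetilde{{\rm{S}}}^{+}_{4}\dashrightarrow {{\rm{S}}}^{+}_{4}$ has deck involution $\tilde{\mathring J}$, which via the isomorphism $[\Theta^{-1}]$ lifts the classical association involution $\mathring J$ on $V_{1}/G$. By Lemma \ref{fissati}, $\mathring J$ extends to $(V_{1}\cup \mathring{\sK})/G$ and acts as the identity on $\mathring{\sK}/G$, because the six bisecants of a sextic of conic type lie on a degenerate conic and such sextuples are fixed points of $j$. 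Consequently the image of $\mathring{\sK}/G$ in $\widetilde{{\rm{S}}}^{+}_{4}$ is contained in the fixed locus of $\tilde{\mathring J}$, which is exactly the ramification divisor of $q_{{\rm{S}}^{+}_{4}}$; applying $q_{{\rm{S}}^{+}_{4}}$ places ${{\rm{S}}^{{\rm{null}},0}_{4}}$ inside the branch locus.

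Finally, the birationality between $p_{{\rm{S}}^{+}_{4}}(\sH_{\rm{{c.t.}}})$ and ${{\rm{S}}^{{\rm{null}},0}_{4}}$ is essentially a restatement of Proposition \ref{estensionee}: the extension $f_{|\mathring{\sK}/G}\colon \mathring{\sK}/G\to {{\rm{S}}^{{\rm{null}},0}_{4}}$ is dominant of degree one, and via $\Theta$ the source $\mathring{\sK}/G$ is identified with $p_{{\rm{S}}^{+}_{4}}(\mathring{\sH}_{\rm{{c.t.}}})$. The only slightly delicate point -- which is where I would expect any real obstacle -- is keeping track of the compatibility between $\Theta$ and the moduli map across $\mathring{\sH}_{\rm{{c.t.}}}$ (in particular the fact that the classical association fixes the loci of conic-supported sextuples); but this has already been installed by Lemma \ref{fissati} and by the extension of $\Theta$ over $\sD\supset \mathring{\sH}_{\rm{{c.t.}}}$ in \cite{TZ3}, so the proof reduces to chaining these citations.
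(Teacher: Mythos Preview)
Your proposal is correct and follows essentially the same route as the paper: reducedness from Proposition \ref{reducedness}, irreducibility of the source from Proposition \ref{cleaning}, domination from the Reconstruction Theorem, the branch-locus assertion from Lemma \ref{fissati}, and the degree-one birationality from Proposition \ref{estensionee}. The paper's proof is terser in that it simply cites Proposition \ref{estensionee} for irreducibility, the branch statement, and the birationality all at once (since that proposition already packages the Reconstruction Theorem and Lemma \ref{fissati}), whereas you unpack these ingredients explicitly; the underlying logic is identical.
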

\begin{proof} By Proposition \ref{reducedness}  ${{{\rm{S}}^{{\rm{null}},0}_{4}}}$ is reduced. Moreover we have seen that by definition $[\Theta^{-1}](\mathring \sK/G)=\mathring\sH_{\rm{{c.t.}}} /\!/G=p_{{\rm{S}}^{+}_{4}}(\mathring\sH_{\rm{{c.t.}}})$. By Proposition \ref{cleaning} we know that $\mathring{\sH}_{{\rm{c.t.}}}$ is irreducible. Hence $\mathring \sK/G$ is irreducible.
By Proposition \ref{estensionee} it holds that that ${{{\rm{S}}^{{\rm{null}},0}_{4}}}$ is irreducible too.  By Proposition \ref{estensionee} it follows that the divisor ${{{\rm{S}}^{{\rm{null}},0}_{4}}}$ is contained in the branch loci of $q_{{\rm{S}}^{+}_{4}}\colon \widetilde{{\rm{S}}}^{+}_{4}\dashrightarrow{{\rm{S}}}^{+}_{4}$ and that it is birational to $p_{{\rm{S}}^{+}_{4}}(\mathring\sH_{\rm{{c.t.}}})$.
\end{proof}
 
 \begin{thm}\label{rationalityvanishing}
     ${\overline{{\rm{S}}^{{\rm{null}},0 }_{4}}}$ is a rational variety.     
 \end{thm}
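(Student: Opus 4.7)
The plan is to combine the two main results that have been established in the preceding sections into a short chain of birational equivalences. By Proposition \ref{forrationalityvanishing}, the divisor ${\overline{{\rm{S}}^{{\rm{null}},0}_{4}}}$ is irreducible and reduced, and more importantly it is birational to the image $p_{{\rm{S}}^{+}_{4}}(\mathring\sH_{{\rm{c.t.}}})$, which by the construction in Subsection \ref{Sub: Two ray} coincides (via the isomorphism $\Theta$) with the GIT-type quotient $\mathring\sH_{{\rm{c.t.}}}/\!/G$. Thus the first step is to invoke Proposition \ref{forrationalityvanishing} to reduce the rationality of ${\overline{{\rm{S}}^{{\rm{null}},0}_{4}}}$ to the rationality of $\mathring\sH_{{\rm{c.t.}}}/\!/G$.

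Next, I would apply Theorem \ref{firstrational}, which asserts precisely that $\mathring\sH_{{\rm{c.t.}}}/\!/G$ is a rational variety. Recall that this was proven by identifying $\mathring\sH_{{\rm{c.t.}}}$ with the open subscheme $\mathring\mP(S^2\sG^*)$ of the projective bundle of conics over $\mathring{\mathbb G}\subset \mathbb G(3,V)$ via Proposition \ref{cleaning}, and then invoking Lemma \ref{quotient grassmannian} together with the Lemme de descente to conclude that the quotient by the $G$-action is birational to a projective bundle over a rational base, hence rational.

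Chaining these two facts gives the birational equivalence
\[
{\overline{{\rm{S}}^{{\rm{null}},0}_{4}}} \sim_{\mathrm{bir}} \mathring\sH_{{\rm{c.t.}}}/\!/G \sim_{\mathrm{bir}} \mP^{N}
\]
for an appropriate $N$, which proves the theorem. I expect no real obstacle here: all the non-trivial work has been absorbed into the Reconstruction Theorem \ref{perlomodulothree} (which feeds Proposition \ref{forrationalityvanishing}) and into the explicit invariant-theoretic description of $\mathring\sH_{{\rm{c.t.}}}$ (which feeds Theorem \ref{firstrational}). The only point worth stating carefully in the write-up is that the dominant map $\mathring\sH_{{\rm{c.t.}}} \dashrightarrow {\overline{{\rm{S}}^{{\rm{null}},0}_{4}}}$ constructed in Proposition \ref{forrationalityvanishing} is generically one-to-one when passed to the quotient by $G$, so that the birational identification $\mathring\sH_{{\rm{c.t.}}}/\!/G \sim_{\mathrm{bir}} {\overline{{\rm{S}}^{{\rm{null}},0}_{4}}}$ is genuine rather than merely a finite cover.
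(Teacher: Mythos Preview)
Your proposal is correct and follows essentially the same approach as the paper: invoke Proposition \ref{forrationalityvanishing} to identify ${\overline{{\rm{S}}^{{\rm{null}},0}_{4}}}$ birationally with $p_{{\rm{S}}^{+}_{4}}(\mathring\sH_{{\rm{c.t.}}})\sim_{\mathrm{bir}}\mathring\sH_{{\rm{c.t.}}}/\!/G$, then apply Theorem \ref{firstrational}. Your added remarks on why the quotient identification is genuinely birational (rather than a finite cover) and the recap of how Theorem \ref{firstrational} was obtained are accurate and simply make explicit what the paper leaves implicit.
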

 \begin{proof} 
By Proposition \ref{forrationalityvanishing} ${\overline{{\rm{S}}^{{\rm{null}},0 }_{4}}}$ is birational to  $p_{{\rm{S}}^{+}_{4}}(\mathring{\sH_{\rm{c.t.}}})$. On the other hand by construction  $p_{{\rm{S}}^{+}_{4}}(\mathring{\sH_{\rm{c.t.}}})$ is birational to $\mathring{\sH_{\rm{c.t.}}}//G$.
  By Theorem \ref{firstrational} we conclude.
 \end{proof}
\subsection{The rationality theorem of the theta-null Prym moduli space}
Let $\sR^{{\rm{null}}}_4$ be the moduli space which parameterises couples $(C,\eta)$ where
$C$ is a genus $4$ curve with a vanishing theta-null and $\eta$ is a nontrivial $2$-torsions line bundle. We denote by $\delta$ the vanishing theta-null, i.e. the $g^1_3$.
\begin{thm}\label{rationalityprym}
$\sR^{{\rm{null}}}_4$ is a rational variety.
\end{thm}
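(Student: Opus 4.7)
The plan is to construct a birational map between the rational variety ${\overline{\sS^{{\rm{null}},0 }_{4}}}$ of Theorem \ref{rationalityvanishing} and $\sR^{{\rm{null}}}_4$. The natural candidate is
\[
\phi\colon {\overline{\sS^{{\rm{null}},0 }_{4}}}\dashrightarrow \sR^{{\rm{null}}}_4,\qquad [C,\theta]\mapsto [C,\theta-\delta],
\]
where $\delta$ is the unique $g^1_3$ on the general $[C]\in\sM^{{\rm{null}}}_4$. Well-definedness is immediate: since $2\delta\sim K_C\sim 2\theta$, the class $\eta:=\theta-\delta$ is a $2$-torsion point of $\mathrm{Pic}(C)$, and it is nonzero because $\theta$ is ineffective while $\delta$ is effective.

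Generic injectivity of $\phi$ is equally easy. From $[C,\eta]\in \sR^{{\rm{null}}}_4$ with $[C]\in\sM^{{\rm{null}}}_4$ general, the trigonal pencil $\delta$ is unique on $C$, so $\theta=\eta+\delta$ is recovered. This also yields the inverse rational map $\psi\colon \sR^{{\rm{null}}}_4\dashrightarrow {\overline{\sS^{{\rm{null}},0 }_{4}}}$, defined on the open locus where $\eta+\delta$ is an even ineffective theta-characteristic.

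The main step is then to verify that $\sR^{{\rm{null}}}_4$ is irreducible. Granted this, $\phi$ is a generically injective dominant map between two irreducible varieties of the same dimension $\dim\sM^{{\rm{null}}}_4=8$, hence birational, and the rationality of $\sR^{{\rm{null}}}_4$ follows from Theorem \ref{rationalityvanishing}. My approach combines the Reconstruction Theorem \ref{perlomodulothree} with the behaviour of the Prym map $\sR_4\to\sA_3$: the Reconstruction Theorem identifies the image of $\phi$ with the irreducible quotient $\mathring{\sH}_{{\rm{c.t.}}}//G$ of Proposition \ref{cleaning}, providing one $8$-dimensional irreducible piece of $\sR^{{\rm{null}}}_4$, while the Prym-map monodromy analysis rules out any further component.

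The hard part is precisely this irreducibility statement. A priori the forgetful cover $\sR^{{\rm{null}}}_4\to\sM^{{\rm{null}}}_4$ could split according to the Mumford quadratic form $q_\delta(\eta)=h^0(C,\eta+\delta)\pmod{2}$ associated to the globally defined theta-null $\delta$, which would yield two distinct $8$-dimensional components (corresponding to the $135$ nonzero $\eta$ with $q_\delta(\eta)=0$ and the $120$ with $q_\delta(\eta)=1$); disproving such a splitting, via the geometry of the Prym map alluded to in the Introduction, is the non-trivial input beyond the formal map-theoretic considerations above.
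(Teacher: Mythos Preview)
Your outline matches the paper exactly up to the point where irreducibility of $\sR^{{\rm{null}}}_4$ becomes the issue: the map $[C,\theta]\mapsto[C,\theta-\delta]$, its well-definedness, and its generic injectivity are all as in the paper, and you correctly isolate the parity obstruction coming from the quadratic form $q_\delta$.

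The gap is that you do not actually prove irreducibility. You defer it to a ``Prym-map monodromy analysis'' for $\sR_4\to\sA_3$ that you neither formulate precisely nor carry out, and this is \emph{not} the paper's argument. The phrase ``geometry of the Prym map'' in the Introduction refers to the Prym-\emph{canonical} morphism $\phi_{|\theta+\delta|}\colon C\to\mP^2$ of Subsection~\ref{Prymcanonical}, not to the map to principally polarised abelian threefolds; the latter plays no role anywhere in the paper.

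What the paper does instead is entirely internal to the machinery already built. Take a general $[C,\eta]$ on \emph{any} component of $\sR^{{\rm{null}}}_4$, set $\theta:=\eta+\delta$ (a theta characteristic, with no a~priori assumption on $h^0(\theta)$), and re-run the computation of Proposition~\ref{threesexstic} for the linear system $|\theta+\delta|=|\eta+K_C|$. On the blow-up $S\to\mP^2$ along the six nodes one still has $2h|_C\sim 2K_C$, and the same Riemann--Roch argument on $S$ forces $|4h-2\sum E_{ij}|$ to consist of a single member, so the six nodes again sit in the $(4,6)$ configuration of four lines. The Reconstruction Theorem~\ref{perlomodulothree} then produces $[R]\in\mathring\sH_{\rm{c.t.}}$ over $[C,\eta]$. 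Since $\mathring\sH_{\rm{c.t.}}$ is irreducible (Proposition~\ref{cleaning}), every component of $\sR^{{\rm{null}}}_4$ is dominated by it, and there is only one. A~posteriori this also gives $h^0(C,\eta+\delta)=0$ for general $\eta$, dissolving the parity concern you raised. Your proposal would become a complete proof if you replaced the appeal to $\sR_4\to\sA_3$ by this direct argument.
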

\begin{proof}
Let $f\colon {\rm{S}}^{{\rm{null}},0}_4\dashrightarrow \sR^{{\rm{null}}}_4$ be the rational map given by $[(C,\theta)]\mapsto [(C, \theta-\delta)]$. It is
generically injective. By a dimensional count this implies that ${\rm{S}}^{{\rm{null}},0}_4$ is birational to an irreducible component of  $\sR^{{\rm{null}}}_4$. Now, using our reconstruction theorem, that is Theorem \ref{perlomodulothree}, we show that  $\sR^{{\rm{null}}}_4$ is irreducible.

Indeed let  $[(C, \eta)]\in \sR^{{\rm{null}}}_4$ be a general element. Clearly our claim is to show that $h^0(C,\sO_C(\eta+\delta))=0$. By the exact sequence
$$0\to \sO_C(\eta)\to \sO_C(\eta+\delta)\to \sO(\eta+\delta)|_{\delta}\to 0$$
and the fact that $\sO(\eta+\delta)|_{\delta}\simeq \mC^{\oplus 3}$,
we see that $h^0(C,\sO_C(\eta+\delta))=0$ iff $H^0(\sO(\eta+\delta)|_{\delta})\simeq \mC^3\to H^1(\sO(\eta))\simeq \mC^3$ is an isomorphism.  We set $\theta:= \eta+\delta$. We consider the morphism $\phi_{|\theta+\delta|}\colon C\to\mP^2$. By the same argument used in Proposition \ref{threesexstic} the image $M$ of the morphism $\phi_{|\theta+\delta|}\colon C\to\mP^{2}$ is a sextic with six nodes. Let $\pi\colon S\to \mP^2$ be the blow-up at the six points
and $e_1,\dots, e_6$ the respective exceptional divisors.
The main point is again to show $|\pi^*\sO(4)-2\sum e_i|$ has a unique member.
Since $K_C=3h-\sum e_i|_C$ and $2h=2(\delta+\theta)=2K_C$,
we have $4h-2\sum e_i|_C=4h-2(3h-K_C)=2K_C-2h=0$.
Now consider the following exact seqeunce:
\[
0\to \pi^*\sO_{\mP^2}(-2)\to \pi^*\sO_{\mP^2}(4)\otimes_{\sO_S}\sO_S(-2\sum e_i)\to 
\sO_C(4h-2\sum e_i)\to 0
\]
(note that $C\in |\pi^*\sO_{\mP^2}(6)\otimes_{\sO_S}\sO_S(-2\sum e_i)|$).
By this, analogously as in the proof of Proposition \ref{threesexstic}, we see that $|\pi^*(\sO_{\mP^2}(4)-2\sum e_i|$ has a unique member and again the member $D$ of $|\pi^*(\sO(4)-2\sum e_i|$ consists of $4$ lines and  again the six points are all the mutual intersection points among them. Finally, by Theorem \ref{perlomodulothree} we can conclude that $\mathring{\sH}$ dominates also $\sR^{{\rm{null}}}_4$. Hence $\sR^{{\rm{null}}}_4$ is irreducible. In particular it is birational to ${\overline{\sS^{{\rm{null}},0 }_{4}}}$.  By Corollary \ref{rationalityvanishing} it is a rational variety.

\end{proof}

\end{document}